\documentclass[11pt]{article}
\usepackage{latexsym,amssymb,amsmath} 
\usepackage{path}
\usepackage{verbatim}
\usepackage{amsfonts}
\usepackage{amsfonts}
\usepackage{array}
\usepackage{mathrsfs}
\usepackage{amsfonts}
\usepackage{amsmath}
\usepackage{amssymb}
\usepackage{pifont}
\usepackage{amsthm}
\usepackage{bm}
\usepackage{appendix}
\usepackage{multirow}
\usepackage{pifont}
\usepackage{graphicx} 
\usepackage{subfigure}
\usepackage{color}
\usepackage[table]{xcolor}
\usepackage{thmtools}
\usepackage{thm-restate}
\usepackage{enumitem}

\usepackage{graphicx}
\usepackage{epsfig}
\usepackage{subfigure}
\usepackage{epstopdf}

\definecolor{gray}{rgb}{0.85,0.85,0.85}
\definecolor{yama}{rgb}{0.98, 0.87, 0.68}
\definecolor{lightskyblue}{rgb}{0.53, 0.81, 0.98}

\graphicspath{{figures/}}

\usepackage[lined,boxed,ruled, commentsnumbered]{algorithm2e}

\newtheorem{lemma}{Lemma}

\newtheorem{definition}{Definition}
\newtheorem{remark}{Remark}
\newtheorem{assumption}{Assumption}

\newcommand{\supp}{\text{supp}}

\newcommand{\argmin}{\mathop{\arg\min}}

\newcommand{\tabincell}[2]{\begin{tabular}{@{}#1@{}}#2\end{tabular}}

\newcommand{\st}{\text{subject to }}

\setlength{\oddsidemargin}{0.0in}
\setlength{\textwidth}{6.5in}
\setlength{\topmargin}{0.0in}
\setlength{\headheight}{12pt}
\setlength{\headsep}{13pt}
\setlength{\textheight}{625pt}
\setlength{\footskip}{24pt}

\makeatletter
\setlength{\arraycolsep}{2\p@} 
\makeatother



\newcommand{\junk}[1]{{}}

\newlength{\fwtwo} \setlength{\fwtwo}{0.45\textwidth}

\DeclareGraphicsExtensions{.jpg}

\title{Generalization Bounds for High-dimensional M-estimation under Sparsity Constraint}
\author{
  Xiao-Tong Yuan,\ \  Ping Li \\
  Cognitive Computing Lab\\
  Baidu Research \\
  E-mail: \{\texttt{xtyuan1980@gmail.com}, \texttt{pingli98@gmail.com}\}
  }
\date{}

\begin{document}

\maketitle

\begin{abstract}
The $\ell_0$-constrained empirical risk minimization ($\ell_0$-ERM) is a promising tool for high-dimensional statistical estimation. The existing analysis of $\ell_0$-ERM estimator is mostly on parameter estimation and support recovery consistency. From the perspective of statistical learning, another fundamental question is how well the $\ell_0$-ERM estimator would perform on unseen samples. The answer to this question is important for understanding the learnability of such a non-convex (and also NP-hard) M-estimator but still relatively under explored.

In this paper, we investigate this problem and develop a generalization theory for $\ell_0$-ERM. We establish, in both white-box and black-box statistical regimes, a set of generalization gap and excess risk bounds for $\ell_0$-ERM to characterize its sparse prediction and optimization capability. Our theory mainly reveals three findings: 1) tighter generalization bounds can be attained by $\ell_0$-ERM than those of $\ell_2$-ERM  if the risk function is (with high probability) restricted strongly convex; 2) tighter uniform generalization bounds can be established for $\ell_0$-ERM than the conventional dense ERM; and 3) sparsity level invariant bounds can be established by imposing additional strong-signal conditions to ensure the stability of $\ell_0$-ERM. In light of these results, we further provide generalization guarantees for the Iterative Hard Thresholding (IHT) algorithm which serves as one of the most popular greedy pursuit methods for approximately solving $\ell_0$-ERM. Numerical evidence is provided to confirm our theoretical predictions when implied to sparsity-constrained linear regression and logistic regression models.
\end{abstract}

\subparagraph{Key words.} Sparsity, empirical risk minimization, generalization gap, excess risk, uniform stability, iterative hard thresholding, learning theory.


\section{Introduction}

We are interested in developing sparse learning theory for the problem of high-dimensional stochastic risk minimization~\cite{shalev2010learnability,vapnik2013nature}
\[
\min_{w \in \mathcal{W}} F(w):= \mathbb{E}_{\xi \sim D} [\ell(w;\xi)],
\]
where $w\in \mathcal{W}\subseteq \mathbb{R}^p$ is the model parameter vector, $\ell(w;\xi)$ is a non-negative convex function that measures the loss of $w$ at data sample $\xi \in \mathcal{X}$, $D$ represents a random distribution over $\mathcal{X}$. In realistic problems, the mathematical formulation of $D$ is typically unknown and thus it is hopeless to directly optimize such a population form. Alternatively, given a set of i.i.d. training samples $S=\{\xi_i\}_{i=1}^n \in \mathcal{X}^n$ drawn from $D$, the following sparsity-constrained empirical risk minimization problem is often considered for learning sparse models in high-dimensional settings~\cite{bach2012optimization,donoho2006compressed,hastie2015statistical}:
\begin{equation}\label{prob:general}
w_{S,k} = \argmin_{w \in \mathcal{W}} F_S(w):= \frac{1}{n}\sum\limits_{i=1}^n \ell(w;\xi_i) \quad \st \|w\|_0\le k,
\end{equation}
where the model cardinality constraint $\|w\|_0\le k$ is imposed for enhancing learnability and interpretability of model when $p\gg n$ which might usually be the case in the era of big data. We refer the above sparse M-estimation problem as $\ell_0$-ERM.

In this paper, we address the fundamental question of how well the population risk $F(w_{S,k})= \mathbb{E}_{\xi \sim D} [\ell(w_{S,k};\xi)]$ is approximated by the empirical risk $F_S(w_{S,k})=\frac{1}{n}\sum_{i=1}^n \ell(w_{S,k};\xi_i)$ at the $\ell_0$-ERM estimator $w_{S,k}$. The value $\Delta_{S,k}:=F(w_{S,k}) - F_S(w_{S,k})$ is referred to as \emph{generalization gap} of $\ell_0$-ERM at sample $S$. Our primary goal is to find a sample size vanishing bound $\delta_n$, as tight as possible, such that $\Delta_{S,k} \le \delta_n$ holds in expectation or with high probability. Denote $F(\bar w)= \min_{\|w\|_0\le k} F(w)$ as the population optimal value under sparsity constraint. The convergence of $\Delta_{S,k}$ consequently implies that $F(w_{S,k}) - F(\bar w)$, the \emph{excess risk} (a.k.a. \emph{population sub-optimality}~\cite{shalev2009stochastic}), also converges with respect to sample size~\footnote{For example, $\mathbb{E}_S\left[F(w_{S,k}) - F(\bar w)\right] = \mathbb{E}_S\left[\Delta_{S,k} + F_S(w_{S,k}) -F_S(\bar w) + F_S(\bar w) - F(\bar w)\right] \le \mathbb{E}_S \left[\Delta_{S,k}\right]$.}.

Due to the presence of cardinality constraint, $\ell_0$-ERM is simultaneously non-convex and NP-hard even when the loss function is convex~\cite{natarajan1995sparse}, which makes it computationally intractable to solve the problem exactly in general cases. Therefore, one must instead seek approximate solutions instead of combinatorial search over all possible models. Among others, the Iterative Hard Thresholding (IHT)~\cite{blumensath2009iterative} is a family of first-order greedy selection methods popularly used and studied for solving $\ell_0$-ERM with outstanding practical efficiency and scalability witnessed in many applications~\cite{jain2014iterative,jin2016training,yuan2018gradient,zhou2018efficient}. The common theme of IHT-style algorithms is to iterate between gradient descent and hard thresholding to maintain sparsity of solution while minimizing the objective value. In the considered problem setting, a plain IHT iteration is given by
\begin{equation}\label{prob:iht}
w_{S,k}^{(t)} = \mathrm{H}_k \left(w_{S,k}^{(t-1)} - \eta\nabla F_S(w_{S,k}^{(t-1)})\right),
\end{equation}
where $\mathrm{H}_k(\cdot)$ is the truncation operator that preserves the top $k$ (in magnitude) entries of input and sets the remaining to be zero, with ties broken arbitrarily. The procedure is typically initialized with all-zero vector $w^{(0)}$. In this paper, we are also interested in understanding the generalization performance of the estimate $w_{S,k}^{(t)}$ output by IHT after sufficient rounds of iteration.

\begin{table}
\centering
\begin{tabular}{|c|c|c|c|}
\hline
Result & Measurement & High Prob. Bound & Exp. Bound  \\
\hline
\multirow{2}{*}{\tabincell{c}{Theorem~\ref{thrm:generalization_barw} \\ ($L_0$-ERM, white-box) \\ }} & $F( w_{S,k}) - F_S( w_{S,k})$  & $\mathcal{\tilde O} \left(\frac{k\log (p)}{n}+\frac{1}{\sqrt{n}}\right)$ & $\mathcal{O} \left(\frac{k\log (p)}{n}\right)$\\
&$F( w_{S,k}) - F(\bar w)$  & $\mathcal{\tilde O} \left(\frac{k\log (p)}{n}\right)$ & $\mathcal{O} \left(\frac{k\log (p)}{n}\right)$ \\
\hline
\multirow{2}{*}{\tabincell{c}{ Theorem~\ref{thrm:universe_generalizaion} \\ ($L_0$-ERM, uniform) }} & \multirow{2}{*}{\tabincell{c}{$\sup_{\|w\|_0\le k}\left|F(w)  - F_S(w)\right| $}} & \multirow{2}{*}{\tabincell{c}{$\mathcal{\tilde O}\left( \sqrt{\frac{k\log(p)}{n}} \right)$}} & \multirow{2}{*}{\tabincell{c}{---}} \\
& &  & \\
\hline
\multirow{2}{*}{\tabincell{c}{ Theorem~\ref{thrm:generalizaion_support_recovery_high} \\ ($L_0$-ERM, black-box) }} &$F(w_{S,k}) - F_S( w_{S,k})$ & $\mathcal{\tilde O}\left(\frac{\log(n)}{\sqrt{n}} \right)$ & $\mathcal{O}\left(\frac{1}{n}\right)$ \\
&$F( w_{S,k}) - F(\bar w)$ & $\mathcal{\tilde O}\left(\frac{\log(n)}{\sqrt{n}} \right)$ & $\mathcal{O}\left(\frac{1}{n}\right)$ \\
\hline
\hline
\multirow{2}{*}{\tabincell{c}{ Theorem~\ref{thrm:uniform_stability_strong_iht} \\ (IHT, black-box) }} &$F(w^{(t)}_{S,k}) - F_S( w^{(t)}_{S,k})$ & $\mathcal{\tilde O}\left(\frac{\log(n)}{\sqrt{n}} \right)$ &--- \\
&\tabincell{c}{$F( w^{(t)}_{S,k}) - F(\bar w)$ \\ ($t \ge \mathcal{\tilde O}(\log(n))$) } & $\mathcal{\tilde O}\left(\frac{\log(n)}{\sqrt{n}} \right)$ & ---\\
\hline
\end{tabular}
\caption{Overview of our main results on the generalization gap and excess risk bounds for $\ell_0$-ERM and IHT in statistical white-box (upper panel) and black-box (lower panel) regimes respectively. For $\ell_0$-ERM, the target solution is $\bar w=\argmin_{\|w\|_0\le k}F(w)$. For IHT, the target solution is $\bar w =\argmin_{\|w\|_0\le \bar k} F(w)$ for proper $\bar k < k$. \label{tab:results}}
\end{table}

\textbf{Our results:} The main result of this work is a set of novel generalization bounds, in expectation and/or with high probability, for the $\ell_0$-ERM estimator and IHT. Our analysis simultaneously covers a statistical \emph{white-box} setting where the data is assumed to be generated according to a sub-Gaussian model with sparse parameters, and a \emph{black-box} but more realistic setting where the prior information of the data generation process is unknown.
Table~\ref{tab:results} summarizes our main results, which are highlighted in details below:
\begin{itemize}
  \item \emph{White-box generalization results.} We first consider a white-box regime in which we assume that there exists a nominal sparse model $\bar w$ such that the sample-wise loss gradient $\nabla \ell(\bar w;\xi)$ is element-wise $\sigma^2$-sub-Gaussian with zero-mean. This assumption implies that $\nabla F(\bar w) =0$, i.e., $\bar w$ attains a global minimizer of the population risk $F$. In this well-specified setting, on top of the standard $\ell_2$-norm estimation error bounds of $\ell_0$-ERM, we show in Theorem~\ref{thrm:generalization_barw} that with high probability, the generalization gap is bounded by $\Delta_{S,k} \le \mathcal{\tilde O} \left(k\log(p)/n+1/\sqrt{n}\right)$ while the excess risk by $F(w_{S,k}) - F(\bar w)\le \mathcal{\tilde O} \left(k\log(p)/n\right)$. Here we use the big o notation $\mathcal{\tilde O}$ to hide the logarithmic factors other than $n,p,k$. The corresponding bounds in expectation are both of order $\mathcal{O}(k\log(p)/n)$. These bounds have been substantialized to sparse linear regression and logistic regression models to demonstrate their applicability. We notice that up to logarithmic factors, the excess risk bounds established in this setting are minimax optimal over the cardinality constraint and they match the results in~\cite{abramovich2018high} for sparsity-penalized logistic regression in misclassification excess risk.
  \item \emph{Black-box generalization results.} We then turn to a more realistic black-box regime in which we do not impose any distribution-specific assumptions on the data generative model. Under proper regularization conditions on sample size, we first establish in Theorem~\ref{thrm:universe_generalizaion} a uniform convergence bound $\sup_{\|w\|_0\le k}|F(w) - F_S(w)|\le\mathcal{\tilde O}\left(\sqrt{k\log(p)/n}\right)$ over a bounded domain of interest, which extends the uniform bound for conventional dense ERM~\cite{shalev2009stochastic} to $\ell_0$-ERM. For restricted strongly convex risk functions, based on the stability arguments and a recent uniform stability theory~\cite{feldman2019high}, we show in Proposition~\ref{thrm:uniform_stability} that the generalization gap and excess risk of $\ell_0$-ERM can be upper bounded by $\mathcal{\tilde O}\left(\sqrt{(\log^2(n)+k\log(n)\log(ep/k))/n}\right)$ without assuming bounding conditions on sample size and domain of interest. By imposing some additional strong-signal conditions, we further establish in Theorem~\ref{thrm:generalizaion_support_recovery_high} the $\mathcal{\tilde O}(\log(n)/\sqrt{n})$ (with high probability) and $\mathcal{O}(1/n)$ (in expectation) generalization bounds for $\ell_0$-ERM which are known to be rate-optimal even for the $\ell_2$-ERM.
  \item \emph{Generalization results for IHT}. In light of the derived generalization bounds for $\ell_0$-ERM and the computational complexity bounds of IHT that have been established in~\cite{jain2014iterative,yuan2018gradient}, we are able to analyze the generalization bounds of IHT after sufficient rounds of iteration. In the white-box statistical setting, our results in this line basically reveal that the generalization bounds of IHT are determined by those of $\ell_0$-ERM. While in the black-box setting, given that the population risk function $F$ is \emph{stable with respect to IHT} (see Definition~\ref{def:iht_stability} for a formal definition) up to the desired rounds of iteration, we further prove in Theorem~\ref{thrm:uniform_stability_strong_iht} the $\mathcal{\tilde O}(\log(n)/\sqrt{n})$ high probability bounds for IHT.
\end{itemize}

\textbf{Paper organization:} The paper proceeds with the material organized as follows: In Section~\ref{sect:related_work} we briefly review the related literature. In Section~\ref{sect:generalization_l0_erm} and Section~\ref{sect:generalization_iht} we respectively present the generalization bounds for the $\ell_0$-ERM estimator and the IHT algorithm. The numerical study for theory verification is provided in Section~\ref{sect:experiment}. The concluding remarks are made in Section~\ref{sect:conclusion}. All the technical proofs are relegated to the appendix sections.

\section{Related Work}
\label{sect:related_work}

The problem regime considered in this paper lies at the intersection of high-dimensional sparse M-estimation and statistical learning theory, both of which have long been studied with a vast body of beautiful and deep theoretical results established in literature. Next we will incompletely connect our research to several closely relevant lines of study in this context. We refer the interested readers to~\cite{cesa2006prediction,hastie2015statistical,rigollet201518} and the references there in for a more comprehensive coverage of the related topics.

\textbf{Consistency and generalization of M-estimation with sparsity.} Statistical consistency of $\ell_0$-ERM~\eqref{prob:general} for estimating an underlying true sparse model is now well understood for some popular statistical M-estimation models including linear regression, logistic regression and principle component analysis~\cite{foucart2017mathematical,rigollet201518,yuan2016exact}. To avoid the hardness of $\ell_0$-constraint in terms of global guarantees and computation complexity, convex relaxation-based methods such as those $\ell_1$-penalized estimations (Lasso)~\cite{tibshirani1996regression} were alternatively extensively studied with strong consistency guarantees obtained on parameter estimation and variable selection~\cite{bellec2018slope,li2015sparsistency,loh2012high,meinshausen2009lasso,ravikumar2011high,wainwright2009sharp}. The folded concave penalization, such as the SCAD~\cite{fan2001variable} and the MCP~\cite{zhang2010nearly}, can correct the intrinsic estimation bias of the Lasso and can achieve variable selection consistency under substantially weaker conditions than those of the Lasso~\cite{fan2011nonconcave,fan2004nonconcave,fan2014strong,zhang2012general}. The generalization ability of sparsity-inducing learning models is relatively less understood but has received recent attention. The out-of-sample predictive risk of least squares Lasso estimator was analyzed in~\cite{chatterjee2013assumptionless}. The misclassification excess risk of sparsity-penalized binary logistic regression was investigated in~\cite{abramovich2018high} with near optimal bounds established. For linear prediction models, a data dependent generalization error bound was derived for a class of risk minimization algorithms with structured sparsity constraints~\cite{maurer2012structured}. In the context of deep learning, it was justified in~\cite{arora2018stronger} that sparsity benefits considerably to the generalization performance of deep neural networks. Despite the remarkable success achieved in understanding sparsity models, the generalization theory of $\ell_0$-ERM still remains far less studied. To our knowledge, the most closely related results to ours are those misclassification excess risk bounds developed in~\cite{chen2018best,chen2018high} for $\ell_0$-ERM based binary classification problems. In that regime, they proved an $ \mathcal{\tilde O} \left(\sqrt{k\log(p)/n}\right)$ high probability excess risk bound, and under additional margin conditions an $ \mathcal{O} \left(k\log(p)/n\right)$ in expectation bound. Comparing to those results, our bounds as summarized in Table~\ref{tab:results} are valid for a broader range of loss functions beyond binary loss and in some cases are substantially tighter (see, e.g., Theorem~\ref{thrm:generalization_barw} and Theorem~\ref{thrm:generalizaion_support_recovery_high}). More thorough comparison to the $\ell_0$-ERM generalization results in~\cite{chen2018best,chen2018high} will be carried out in the main text.

\textbf{Uniform convergence and stability of ERM.} There is a rich literature on uniform convergence bounds for the difference between the empirical risk $F_S(w)$ and the population risk $F(w)$~\cite{bartlett2006convexity,bottou2008tradeoffs,shalev2009stochastic}. Although showing to be more general (e.g., applicable to non-convex problems) and lead to tight generalization in some restricted settings~\cite{kakade2009complexity}, uniform convergence bounds tend to suffer from the polynomial dependence on dimensionality and thus are not satisfactory for high-dimensional learning algorithms. To handle this deficiency, for a class of $\ell_1$-penalized high dimensional M-estimators, uniform convergence bounds with polynomial dependence on the sparsity level of certain nominal model were established in~\cite{mei2018landscape}. Also for $\ell_0$-ERM with binary loss function, a uniform excess risk bound of order $ \mathcal{\tilde O} \left(\sqrt{k\log(p)/n}\right)$ was derived in~\cite{chen2018high} under proper regularity conditions. Alternatively, a useful proxy for analyzing the generalization performance is the \emph{stability} of learning algorithms to changes in the training dataset. Since the seminal work of Bousquet and Elisseeff~\cite{bousquet2002stability}, stability has been extensively demonstrated to beget strong generalization bounds for ERM solutions with convex loss~\cite{mukherjee2006learning,shalev2010learnability} and more recently for iterative learning algorithms (such as SGD) as well~\cite{charles2018stability,hardt2016train,kuzborskij2018data}. Specially, the state-of-the-art generalization results are offered by approaches based on the notion of uniform stability~\cite{feldman2018generalization,feldman2019high}. In light of these prior results, we aim to analyze the generalization performance of $\ell_0$-ERM with popularly used convex or non-convex loss functions based on uniform convergence and stability arguments, which to our knowledge has not been systematically treated elsewhere in literature.

\textbf{Statistical guarantees on IHT-style algorithms.} The IHT-style algorithms are popularly used and studied in compressed sensing and sparse learning~\cite{blumensath2009iterative,foucart2011hard,garg2009gradient}. Recent works have demonstrated that by imposing certain assumptions such as restricted strong convexity/smothness and restricted isometry property (RIP) over the risk function, IHT and its variants converge linearly towards certain nominal sparse model with high estimation accuracy~\cite{bahmani2013greedy,yuan2014gradient}. It was later shown in~\cite{jain2014iterative} that with proper relaxation of sparsity level, high-dimensional estimation consistency can be established for IHT without assuming RIP conditions. The sparsity recovery performance of IHT-style methods was investigated in~\cite{shen2017iteration,yuan2016exact} to understand when the algorithm can exactly recover the support of a sparse signal from its compressed measurements. The generalization performance of IHT yet still remains an open problem that we aim to address in this work.

\section{Generalization Bounds for $\ell_0$-ERM}
\label{sect:generalization_l0_erm}

In this section, we present a set of generalization gap and excess risk bounds for the $\ell_0$-ERM estimator. We distinguish our analysis in two regimes: the first is a white-box statistical setting where the data is assumed to be generated according to a truly sparse model, while the second is a black-box but more realistic setting where the data generation process is presumed unknown.

\subsection{White-box statistical analysis}
\label{ssect:whitebox}

We begin by considering an ideal setting where the underlying statistical model for generating the data samples is truly sparse. Such a statistical treatment is conventional in the theoretical analysis of high-dimensional sparsity recovery approaches~\cite{agarwal2012fast,mei2018landscape,yuan2018gradient}. More specifically, we assume that there exists a $k$-sparse parameter vector $\bar w$ such that, roughly speaking, the population risk function is minimized exactly at $\bar w$ with $\nabla F(\bar w)=0$.

\subsubsection{Preliminaries}
We impose the following assumption on the loss function which basically requires the gradient of loss at $\bar w$ obeys a light tailed distribution.

\begin{assumption}[Sub-Gaussian gradient at the true model]\label{assump:gradient_sub_gaussian}
For each $j\in\{1,...,p\}$, we assume that $\nabla_j \ell(\bar w;\xi)$ is $\sigma^2$-sub-Gaussian with zero mean, namely, $\mathbb{E}_\xi[\nabla_j \ell(\bar w;\xi)]=0$ and there exists a constant $\sigma>0$ such that for any real number $t$,
\[
\mathbb{E}_\xi\left[\exp\left\{t(\nabla_j \ell(\bar w;\xi))\right\}\right] \le \exp\left\{\frac{\sigma^2t^2}{2}\right\}.
\]
\end{assumption}
\begin{remark}
The zero-mean assumption implies $\nabla F(\bar w) = 0$. As we will show shortly that this assumption is satisfied by the widely used linear regression and logistic
regression models.
\end{remark}
Our analysis also relies on the conditions of Restricted Strong Convexity/Smoothness (RSC/RSS) which are conventionally used in the analysis of sparsity methods~\cite{bahmani2013greedy,blumensath2009iterative,jain2014iterative,yuan2018gradient}.
\begin{definition}[Restricted Strong Convexity/Smoothness]\label{def:strong_smooth}
For any sparsity level $1\le s \le p$, we say a function $f$ is restricted $\mu_s$-strongly convex and $L_s$-strongly smooth if there exist $\mu_s, L_s > 0$ such that
\[
\frac{\mu_s}{2}\|w-w'\|^2 \le f(w) - f(w') - \langle \nabla f(w'), w -w'\rangle \le \frac{L_s}{2}\|w - w'\|^2, \quad \forall\|w - w'\|_0\le s.
\]
Particularly, we say $f$ is $L$-strongly smooth if $\forall w,w'$,
\[
f(w) - f(w') - \langle \nabla f(w'), w -w'\rangle \le \frac{L}{2}\|w - w'\|^2.
\]
\end{definition}
The ratio number $L_s/\mu_s$, which measures the curvature of the loss function over sparse subspaces, will be referred to as \emph{restricted strong condition number} in this paper.

\subsubsection{Main results}

When analyzing $\ell_0$-ERM in the considered white-box setting, there are three sources of uncertainty at play: the sparse pattern of the unknown $\bar w$, the RSC/RSS conditions of the empirical risk $F_S$ and the statistical noise encoded in the gradient of loss function $\nabla \ell(\bar w;\xi)$. By simultaneously taking all these three factors into account,  we establish in the following theorem a set of generalization bounds for $\ell_0$-ERM. A proof of this result is provided in Appendix~\ref{apdsect:proof_generalization_barw}.

\begin{restatable}{theorem}{ERMWhiteBoxBounds}\label{thrm:generalization_barw}
Assume that there exists a $k$-sparse vector $\bar w$ such that Assumption~\ref{assump:gradient_sub_gaussian} holds. Suppose that $F_S$ is $\mu_{2k}$-strongly convex with probability at least $1- \delta'_n$. Assume that the loss function $\ell$ is $L$-strongly smooth with respect to its first argument and $0\le\ell(w;\xi)\le M$ for all $w,\xi$. Then for any $\delta \in (0, 1- \delta'_n)$, with probability at least $1 - \delta - \delta'_n$ the  generalization gap and excess risk are (separately) upper bounded by
\[
F(w_{S,k}) - F_S(w_{S,k}) \le \mathcal{O}\left(\frac{L}{\mu_{2k}^2} \left(\frac{k\sigma^2\log (p/\delta)}{n}\right)+ M\sqrt{\frac{\log(1/\delta)}{n}}\right),
\]
and
\[
F(w_{S,k}) - F(\bar w) \le \mathcal{O}\left(\frac{L}{\mu_{2k}^2}\left(\frac{k\sigma^2\log (p/\delta)}{n}\right)\right).
\]
Moreover, assume the domain of interest $\mathcal{W}\subset \mathbb{R}^p$ is bounded by $R$. Let $\delta_n = \sigma^2(72 + 16\log p)/n$. If $\delta'_n \le \min\left\{0.5, \frac{\delta_n}{4R^2}\right\}$, then the generalization gap and excess risk in expectation are upper bounded by
\[
\mathbb{E}_S \left[F(w_{S,k}) - F(\bar w)\right] \le \mathbb{E}_S \left[F(w_{S,k}) - F_S(w_{S,k})\right] \le \mathcal{O} \left(\frac{L}{\mu_{2k}^2} \left(\frac{k\sigma^2\log(p)}{n}\right)\right).
\]
\end{restatable}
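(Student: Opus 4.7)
The plan is to exploit the fact that $w_{S,k}$ and the population-optimal $k$-sparse vector $\bar w$ are both $k$-sparse, so their difference is $2k$-sparse, which lets us invoke $\mu_{2k}$-strong convexity of $F_S$ to convert a gradient tail bound at $\bar w$ into an $\ell_2$ estimation bound, and then use $L$-smoothness of $F$ to transfer that to an excess-risk bound. The generalization gap is then handled by a standard decomposition plus a Hoeffding-type concentration for $F_S(\bar w)$.

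\textbf{Step 1 (Gradient concentration at $\bar w$).} Since $\nabla_j F_S(\bar w)=\tfrac{1}{n}\sum_{i=1}^{n}\nabla_j\ell(\bar w;\xi_i)$ is the average of $n$ i.i.d.\ zero-mean $\sigma^{2}$-sub-Gaussian variables, each coordinate is $\sigma^{2}/n$-sub-Gaussian. A standard sub-Gaussian maximal inequality followed by a union bound over $j\in\{1,\dots,p\}$ gives, with probability $\ge 1-\delta$,
\[
\|\nabla F_S(\bar w)\|_\infty \le \sigma\sqrt{\tfrac{2\log(2p/\delta)}{n}}.
\]
We also need the second-moment version $\mathbb{E}\|\nabla F_S(\bar w)\|_\infty^{2}\lesssim \sigma^{2}\log(p)/n$, which follows from integrating the same sub-Gaussian tail.

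\textbf{Step 2 ($\ell_2$ estimation error via RSC).} On the event where $F_S$ is $\mu_{2k}$-strongly convex on $2k$-sparse directions, apply the RSC inequality to the $2k$-sparse vector $w_{S,k}-\bar w$:
\[
F_S(w_{S,k})-F_S(\bar w)\ge \langle\nabla F_S(\bar w),\,w_{S,k}-\bar w\rangle+\tfrac{\mu_{2k}}{2}\|w_{S,k}-\bar w\|^{2}.
\]
Since $\bar w$ is feasible for the $\ell_0$-ERM, the left side is $\le 0$. Combining with $|\langle\nabla F_S(\bar w),w_{S,k}-\bar w\rangle|\le \|\nabla F_S(\bar w)\|_\infty\|w_{S,k}-\bar w\|_1\le \sqrt{2k}\,\|\nabla F_S(\bar w)\|_\infty\|w_{S,k}-\bar w\|$ yields $\|w_{S,k}-\bar w\|\le \tfrac{2\sqrt{2k}\,\|\nabla F_S(\bar w)\|_\infty}{\mu_{2k}}$.

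\textbf{Step 3 (Excess risk and generalization gap in high probability).} From $L$-strong smoothness of $F$ (inherited from $\ell$) and $\nabla F(\bar w)=0$,
\[
F(w_{S,k})-F(\bar w)\le \tfrac{L}{2}\|w_{S,k}-\bar w\|^{2}\le \tfrac{4Lk}{\mu_{2k}^{2}}\|\nabla F_S(\bar w)\|_\infty^{2},
\]
which, combined with Step 1, yields the claimed $\mathcal{O}\!\left(\tfrac{Lk\sigma^{2}\log(p/\delta)}{\mu_{2k}^{2}n}\right)$ excess-risk bound on the intersection of the two good events (total failure probability $\le \delta+\delta'_n$). For the generalization gap, decompose
\[
F(w_{S,k})-F_S(w_{S,k})=\bigl[F(w_{S,k})-F(\bar w)\bigr]+\bigl[F(\bar w)-F_S(\bar w)\bigr]+\bigl[F_S(\bar w)-F_S(w_{S,k})\bigr],
\]
where the last bracket is non-positive by feasibility of $\bar w$, and the middle bracket is controlled by Hoeffding's inequality under $0\le\ell\le M$, giving the extra $M\sqrt{\log(1/\delta)/n}$ term.

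\textbf{Step 4 (Expectation bounds; the main subtlety).} Here the obstacle is what happens on the bad event where RSC fails (probability $\le\delta'_n$). I would split $\mathbb{E}[F(w_{S,k})-F(\bar w)]$ by the RSC indicator: on the good event use Step 3 together with $\mathbb{E}\|\nabla F_S(\bar w)\|_\infty^{2}\lesssim \sigma^{2}\log p/n$ to get the $\mathcal{O}\!\left(\tfrac{Lk\sigma^{2}\log p}{\mu_{2k}^{2}n}\right)$ contribution; on the bad event use the domain bound $\|w\|\le R$ plus $L$-smoothness to get a deterministic $\tfrac{L}{2}\|w_{S,k}-\bar w\|^{2}\le 2LR^{2}$ bound, so its contribution is at most $2LR^{2}\delta'_n\le \tfrac{L\delta_n}{2}=\mathcal{O}(L\sigma^{2}\log p/n)$ under the assumed $\delta'_n\le \delta_n/(4R^{2})$, which is absorbed. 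For the expected generalization gap, observe that $\mathbb{E}_S[F_S(\bar w)]=F(\bar w)$ and $F_S(w_{S,k})\le F_S(\bar w)$, so $\mathbb{E}[F(w_{S,k})-F_S(w_{S,k})]\le \mathbb{E}[F(w_{S,k})-F(\bar w)]$, reducing to the excess-risk case already handled. The main obstacle overall is calibrating $\delta'_n$ against the $R^{2}$ factor so that the bad-event contribution is of the same order as the good-event bound; this is exactly the role of the hypothesis $\delta'_n\le \min\{0.5,\delta_n/(4R^{2})\}$.
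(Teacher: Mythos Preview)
Your high-probability argument has a sign error that breaks Step~3 and propagates into Step~4. In the decomposition
\[
F(w_{S,k})-F_S(w_{S,k})=\underbrace{[F(w_{S,k})-F(\bar w)]}_{A_1}+\underbrace{[F(\bar w)-F_S(\bar w)]}_{A_2}+\underbrace{[F_S(\bar w)-F_S(w_{S,k})]}_{A_3},
\]
the term $A_3$ is \emph{non-negative}, not non-positive: since $\bar w$ is feasible and $w_{S,k}$ is the empirical minimizer, $F_S(w_{S,k})\le F_S(\bar w)$, so $A_3\ge 0$. You cannot simply drop it. The paper handles $A_3$ by invoking the $L$-smoothness of $F_S$ (inherited from $\ell$) at $\bar w$:
\[
A_3 \le |\langle\nabla F_S(\bar w),\,w_{S,k}-\bar w\rangle|+\tfrac{L}{2}\|w_{S,k}-\bar w\|^2 \le \sqrt{2k}\,\|\nabla F_S(\bar w)\|_\infty\|w_{S,k}-\bar w\|+\tfrac{L}{2}\|w_{S,k}-\bar w\|^2,
\]
and then applies Young's inequality plus the Step~2 estimation bound to get $A_3\lesssim (L/\mu_{2k}^2)\,k\|\nabla F_S(\bar w)\|_\infty^2$, which is of the same order as $A_1$.

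The same sign mistake reappears in Step~4: from $F_S(w_{S,k})\le F_S(\bar w)$ and $\mathbb{E}_S[F_S(\bar w)]=F(\bar w)$ you get $\mathbb{E}_S[F(w_{S,k})-F_S(w_{S,k})]\ge \mathbb{E}_S[F(w_{S,k})-F(\bar w)]$, which is the \emph{opposite} inequality to what you wrote. This is exactly the direction stated in the theorem (excess risk $\le$ generalization gap in expectation), so you cannot reduce the expected generalization gap to the excess-risk bound; you must bound it directly, again via the smoothness control of $A_3$ above. A secondary subtlety you glossed over: when you condition on the good RSC event you need $\mathbb{E}\bigl[\|\nabla F_S(\bar w)\|_\infty^2\mid\text{RSC holds}\bigr]$, not the unconditional moment. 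The paper handles this by using $\delta'_n\le 0.5$ to show the conditional second moment is at most $2\delta_n$; your splitting argument is otherwise the right idea.
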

\begin{remark}
Theorem~\ref{thrm:generalization_barw} basically reveals that with high probability, the generalization gap is bounded by $\mathcal{\tilde O} \left(k\sigma^2\log(p)/n+1/\sqrt{n}\right)$ and the excess risk by $\mathcal{\tilde O} \left(k\sigma^2\log(p)/n\right)$. In expectation, both bounds are of the order $\mathcal{\tilde O} \left(k\sigma^2\log(p)/n\right)$. We comment on the tightness of the excess risk bounds in the minimax sense. It is well known (see, e.g.,~\cite{rigollet201518}) that, up to logarithmic factors, the high probability bound $\mathcal{\tilde O} \left(k\sigma^2\log(p)/n\right)$ is minimax optimal for the squared estimation error $\|w_{S,k} - \bar w\|^2$, which immediately implies that the same bound is also minimax optimal to $F(w_{S,k}) - F(\bar w)$ provided that $F$ has restricted strong convexity.
\end{remark}

\subsubsection{Comparison to $\ell_2$-regularized ERM}
\label{sssect:comparsion_l2ERM}

We compare the generalization gap bounds of $\ell_0$-ERM in Theorem~\ref{thrm:generalization_barw} to those available for the following $\ell_2$-regularized ERM ($\ell_2$-ERM) with convex loss function:
\[
w_{S,\lambda}= \argmin_{w \in \mathcal{W}} F_S(w)+\frac{\lambda}{2}\|w\|^2,
\]
where $\lambda>0$ is the regularization strength parameter. Based on the uniform stability arguments~\cite{bousquet2002stability}, the in expectation generalization gap of $w_{S,\lambda}$ is upper bounded as
\[
\mathbb{E}_S\left[F(w_{S,\lambda}) - F_S(w_{S,\lambda})\right]\le \mathcal{O}\left(\frac{1}{\lambda n}\right).
\]
By setting $\lambda=\mathcal{O}(1/\sqrt{n})$ with balanced impact against the guarantees on excess risk, the above generalization gap bound of $w_{S,\lambda}$ is of order $\mathcal{O}(1/\sqrt{n})$, which would be slower than the corresponding $\mathcal{O}\left(\frac{kL\sigma^2\log (p/\delta)}{n\mu_{2k}^2}\right)$ bound of $w_{S,k}$ for sufficiently large $n = \Omega\left(\frac{k^2 L^2 \sigma^4\log^2(p/\delta)}{\mu^4_{2k}}\right)$. We comment that such a gain in tightness for $\ell_0$-ERM mainly attributes to the (high probability) restricted strong convexity of the empirical risk function.

To our knowledge, the best known high probability (with tail bound $\delta$) generalization gap bond for $\ell_2$-ERM was derived in~\cite[Theorem 1.1]{feldman2019high} as follows:
\[
F(w_{S,\lambda}) - F_S(w_{S,\lambda}) \le \mathcal{O}\left(\frac{\log(n)\log(n/\delta)}{\lambda n} + \sqrt{\frac{\log(1/\delta)}{n}}\right).
\]
With the choice of $\lambda=\mathcal{O}(\log(n)/\sqrt{n})$, the high probability bound of $w_{S,\lambda}$ is dominated by $\mathcal{O}(\log(n/\delta)/\sqrt{n})$. As a comparison, when $n$ is sufficiently large, the bound of $w_{S,k}$ in Theorem~\ref{thrm:generalization_barw} is dominated by $\mathcal{O}(\sqrt{\log(1/\delta)/n})$ which is comparable (up to logarithmic factors on $n,\delta$) to that of $w_{S,\lambda}$.

Concerning the comparison in excess risk, we remark that the $\mathcal{O}\left(\frac{k\sigma^2\log (p/\delta)}{n}\right)$ bounds of $L_0$-ERM established in Theorem~\ref{thrm:generalization_barw} are not directly comparable to those $\mathcal{O}(1/\sqrt{n})$ ones of $L_2$-ERM (see, e.g.,~\cite[Corollary 4.2]{feldman2019high}) as the former is derived for sparsity-constrained minimization while the latter is for unconstrained minimization. Nevertheless, we can still see that $w_{S,k}$ achieves tighter excess risk bounds than $w_{S,\lambda}$ does provided that $n$ dominates $k^2$ as discussed in the previous comparison.

\subsubsection{Examples}

We further show how to apply the bounds in Theorem~\ref{thrm:generalization_barw} to the widely used sparse linear regression and logistic regression models.

\textbf{Example I: Sparse linear regression.} We assume the samples $S=\{x_i, y_i\}$ obey the linear model $y_i = \bar w^\top x_i + \varepsilon_i$ where $\bar w$ is a $k$-sparse parameter vector, $x_i$ are drawn i.i.d. from a zero-mean sub-Gaussian distribution with covariance matrix $\Sigma\succ 0$, and $\varepsilon_i$ are $n$ i.i.d. zero-mean sub-Gaussian random variables with parameter $\sigma^2$. The sparsity-constrained least squares regression model is then written by $\min_{\|w\|_0\le k} F_S(w)= \frac{1}{2n}\sum_{i=1}^n\|y_i - w^\top x_i\|^2$. Based on the result in~\cite[Lemma 6]{agarwal2012fast} (as restated in Lemma~\ref{lemma:strong_convexity_subgaussian} in Appendix~\ref{apdsect:proof_corollary_linear}) we can verify that there exists $c_0>0$ such that with probability at least $1- \exp\{-c_0 n\}$, $F_S$ is $\mu_{2k}$-strongly convex with $\mu_{2k}= \frac{1}{2}\lambda_{\min}(\Sigma) - k c_1\log (p)/n$. Provided that $n \ge 4kc_1 \log (p)/\lambda_{\min}(\Sigma)$, we have $\mu_{2k}\ge \frac{1}{4}\lambda_{\min}(\Sigma)$ holds with probability at least $1- \exp\{-c_0 n\}$. Now we are ready to present the following corollary as an application of Theorem~\ref{thrm:generalization_barw} to the linear regression with sub-Gaussian noise and bounded design. See Appendix~\ref{apdsect:proof_corollary_linear} for its proof.

\begin{restatable}{corollary}{ERMWhiteBoxBoundsLinaer}\label{corol:generalization_barw_linearreg}
Assume that $\varepsilon_i$ are i.i.d. zero-mean $\sigma^2$-sub-Gaussian and $x_i$ are i.i.d. zero-mean sub-Gaussian distribution with covariance matrix $\Sigma\succ 0$ and $\Sigma_{jj}\le 1$. Let $\delta_n = \sigma^2(72 + 16\log p)/n$. Then there exist universal constants $c_0, c_1>0$ such that when $n \ge 4kc_1 \log (p)/\lambda_{\min}(\Sigma)$, for any $\delta \in (0, 1- \exp\{-c_0 n\})$, with probability at least $1 - \delta - \exp\{-c_0 n\}$ the excess risk can be bounded as
\[
F(w_{S,k}) - F(\bar w) \le \mathcal{O}\left(\frac{k\sigma^2\log (p/\delta)}{n\lambda^2_{\min}(\Sigma)}\right).
\]
Moreover, suppose that the domain of interest in bounded by $R$ and $\|x_i\|\le 1$. If $n$ is sufficiently large such that $\exp\{-c_0 n\}\le \min\left\{0.5, \frac{\delta_n}{4R^2}\right\}$, then the expected generalization gap and excess risk are bounded by
\[
\begin{aligned}
\mathbb{E}_S \left[F(w_{S,k}) - F(\bar w)\right]\le \mathbb{E}_S \left[F(w_{S,k}) - F_S(w_{S,k}) \right] \le \mathcal{O} \left(\frac{k\sigma^2\log(p)}{n\lambda^2_{\min}(\Sigma)}\right).
\end{aligned}
\]
\end{restatable}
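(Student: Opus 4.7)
The plan is to specialize Theorem~\ref{thrm:generalization_barw} to the squared-loss model, reducing the corollary to verifying its hypotheses in this concrete setting: sub-Gaussianity of the gradient at $\bar w$, restricted strong convexity of $F_S$, and strong smoothness (plus boundedness of the loss for the expectation version).

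First I would verify Assumption~\ref{assump:gradient_sub_gaussian}. A direct computation gives $\nabla \ell(\bar w; x_i, y_i) = -(y_i - \bar w^\top x_i)x_i = -\varepsilon_i x_i$, so the $j$-th coordinate $-\varepsilon_i x_{ij}$ has zero mean by independence of $\varepsilon_i$ and $x_{ij}$. Conditioning on $x_{ij}$ and using $\sigma^2$-sub-Gaussianity of $\varepsilon_i$ yields $\mathbb{E}[\exp(-t\varepsilon_i x_{ij})\mid x_{ij}] \le \exp(\sigma^2 t^2 x_{ij}^2/2)$; integrating against the sub-Gaussian law of $x_{ij}$ (with $\Sigma_{jj}\le 1$) produces a sub-Gaussian-type moment bound with parameter $\mathcal{O}(\sigma^2)$. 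I expect this to be the main technical point: the product of two independent sub-Gaussians is strictly only sub-exponential, so the sub-Gaussian MGF bound is valid only on a bounded range of $t$. The resolution is to note that the proof of Theorem~\ref{thrm:generalization_barw} uses Assumption~\ref{assump:gradient_sub_gaussian} only through the coordinatewise concentration $\|\nabla F_S(\bar w)\|_\infty \le \mathcal{O}(\sigma\sqrt{\log(p)/n})$, which still holds in the product-sub-exponential regime via Bernstein's inequality for $n$ large enough that the Gaussian tail regime is the binding one.

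Next I would verify the remaining hypotheses. Since $\nabla^2 F_S(w) = \frac{1}{n}\sum_i x_i x_i^\top$ does not depend on $w$, Lemma~\ref{lemma:strong_convexity_subgaussian} (the restated result of~\cite{agarwal2012fast}) yields $\mu_{2k}\ge \tfrac{1}{2}\lambda_{\min}(\Sigma) - c_1 k\log(p)/n$ with probability at least $1-\exp(-c_0 n)$, and the sample-size assumption $n \ge 4 c_1 k \log(p)/\lambda_{\min}(\Sigma)$ collapses this to $\mu_{2k}\ge \lambda_{\min}(\Sigma)/4$. Strong smoothness is automatic because $\nabla^2_w \ell(w;x,y) = xx^\top$ has operator norm $\|x\|^2$, giving $L\le 1$ under $\|x_i\|\le 1$.

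Plugging $\delta'_n = \exp(-c_0 n)$ and $\mu_{2k} = \lambda_{\min}(\Sigma)/4$ into the excess-risk bound of Theorem~\ref{thrm:generalization_barw} immediately yields the stated high-probability rate $\mathcal{O}(k\sigma^2 \log(p/\delta)/(n\lambda_{\min}^2(\Sigma)))$. For the expectation version, I would additionally use the boundedness of $\ell$: on $\|w\|\le R$ with $\|x\|\le 1$ one gets $\ell(w;x,y)\le \mathcal{O}(R^2 + \varepsilon^2)$, so the loss is bounded in expectation (and an $M$-bound that holds under a high-probability truncation of $\varepsilon$ can be threaded through the proof of the parent theorem without affecting the rate). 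The hypothesis $\exp(-c_0 n)\le \min\{0.5, \delta_n/(4R^2)\}$ is precisely the condition that Theorem~\ref{thrm:generalization_barw} requires of $\delta'_n$, and substituting the same constants delivers the claimed $\mathcal{O}(k\sigma^2\log(p)/(n\lambda_{\min}^2(\Sigma)))$ expectation bounds.
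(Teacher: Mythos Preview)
Your approach is essentially identical to the paper's: verify Assumption~\ref{assump:gradient_sub_gaussian} for the quadratic loss, invoke Lemma~\ref{lemma:strong_convexity_subgaussian} to get $\mu_{2k}\ge\lambda_{\min}(\Sigma)/4$ with probability $1-\exp(-c_0 n)$, take $L=1$ from $\|x_i\|\le 1$, and plug into Theorem~\ref{thrm:generalization_barw}. Your caution about $\varepsilon_i x_{ij}$ being only sub-exponential is in fact more careful than the paper (which simply asserts $\sigma^2$-sub-Gaussianity), and your fix via the underlying concentration bound is the right one; conversely, your worry about threading an $M$-bound through the expectation argument is unnecessary, since the in-expectation part of Theorem~\ref{thrm:generalization_barw} relies only on the domain radius $R$ (through Lemma~\ref{lemma:L2_error_expectation}) and not on a uniform loss bound.
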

\begin{remark}
This condition $\exp\{-c_0 n\}\le \min\left\{0.5, \frac{\delta_n}{4R^2}\right\}$ is always satisfiable for sufficiently large $n$ because the left hand side approaches to zero exponentially with respect to $n$ while the right hand side sub-linearly for $\delta_n$.
\end{remark}

\textbf{Example II: Sparse Logistic Regression.} Let us further consider the binary logistic regression model in which the relation between the random feature vector $x \in \mathbb{R}^p$ and its associated random binary label $y \in \{-1,+1\}$ is determined by the conditional probability $\mathbb{P}(y|x; \bar w) = \frac{\exp (2y\bar w^\top x)}{1+\exp (2y\bar w^\top x)}$, where $\bar w$ is a $k$-sparse parameter vector. Given a set of $n$ independently drawn data samples $\{(x_i,y_i)\}_{i=1}^n$, sparse logistic regression learns the parameters so as to minimize the logistic loss function under sparsity constraint: $\min_{\|w\|_0\le k} F_S(w)= \frac{1}{n}\sum_{i=1}^n \log(1+\exp(-2y_i w^\top x_i))$.
Let $X=[x_1,...,x_n]\in\mathbb{R}^{d\times n}$ be the design matrix and $s(z)=1/(1+\exp(-z))$ be the sigmoid function. It can be shown that $\nabla F_S(w) = X a(w)/n$ in which the vector $a(w)\in \mathbb{R}^n$ is given by $[a(w)]_i=-2y_i(1-\sigma(2y_iw^\top x_i))$, and the Hessian $\nabla^2 F_S(w) = X \Lambda(w) X^\top / n$ where $\Lambda(w)$ is an $n\times n$ diagonal matrix whose diagonal entries are $[\Lambda(w)]_{ii}=4s(2y_iw^\top x_i)(1-s(2y_iw^\top x_i))$. Then we have the following corollary as an application of Theorem~\ref{thrm:generalization_barw} to $\ell_0$-constrained logistic regression.

\begin{restatable}{corollary}{ERMWhiteBoxBoundsLogistic}\label{corol:generalization_barw_logisticreg}
Assume that $x_i$ are i.i.d. zero-mean sub-Gaussian distribution with covariance matrix $\Sigma\succ 0$ and $\Sigma_{jj}\le \sigma^2/32$. Suppose that $\|x_i\|\le 1$ for all $i$ and $\mathcal{W}\subset \mathbb{R}^p$ is bounded by $R$. Let $\delta_n = \sigma^2(72 + 16\log p)/n$. Then there exist universal constants $c_0, c_1>0$ such that when $n \ge 4kc_1 \log (p)/\lambda_{\min}(\Sigma)$, for any $\delta \in (0, 1- \exp\{-c_0 n\})$, with probability at least $1 - \delta - \exp\{-c_0 n\}$ the excess risk can be bounded as
\[
F(w_{S,k}) - F(\bar w) \le \mathcal{O} \left(\frac{k\exp(R)\sigma^2\log(p/\delta)}{n\lambda^2_{\min}(\Sigma)}\right).
\]
Moreover, if $n$ is sufficiently large such that $\exp\{-c_0 n\}\le \min\left\{0.5, \frac{\delta_n}{4R^2}\right\}$, then the generalization gap and excess risk are bounded in expectation by
\[
\begin{aligned}
\mathbb{E}_S \left[F(w_{S,k}) - F(\bar w)\right] \le \mathbb{E}_S \left[F(w_{S,k}) - F_S(w_{S,k})\right] \le \mathcal{O} \left(\frac{k\exp(R)\sigma^2\log(p)}{n\lambda^2_{\min}(\Sigma)}\right).
\end{aligned}
\]
\end{restatable}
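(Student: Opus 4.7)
The plan is to instantiate Theorem~\ref{thrm:generalization_barw} for the logistic model, which requires four verifications: (i) Assumption~\ref{assump:gradient_sub_gaussian} on the sample-wise gradient at $\bar w$; (ii) restricted strong convexity of $F_S$ at parameter $\mu_{2k}$ with high probability; (iii) an $L$-strong-smoothness constant for the logistic loss; and (iv) a uniform bound $M$ on the loss. Once each ingredient is produced with explicit constants, the two stated inequalities follow by direct substitution into the high-probability and in-expectation bounds of Theorem~\ref{thrm:generalization_barw}, together with the observation that $L = \mathcal{O}(1)$ and $M = \mathcal{O}(R)$ so these factors get absorbed into the big-$\mathcal{O}$ notation on $\lambda_{\min}(\Sigma)$ and $\exp(R)$.

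For (i), I compute $\nabla_j \ell(\bar w; (x,y)) = -2y\bigl(1-s(2y\bar w^\top x)\bigr) x_j$ and first check zero mean by conditioning on $x$: plugging in $\mathbb{P}(y=+1\mid x;\bar w) = s(2\bar w^\top x)$ and $\mathbb{P}(y=-1\mid x;\bar w)=1-s(2\bar w^\top x)$ makes the two terms cancel. For sub-Gaussianity, since the scalar factor $|2y(1-s(2y\bar w^\top x))|$ is bounded by $2$, the conditional MGF satisfies $\mathbb{E}[\exp(t\nabla_j \ell(\bar w;\xi))\mid x] \le \exp(2t\,|x_j|)$ (a centered sub-Gaussian-of-bounded-variable argument), and then taking expectation over the sub-Gaussian $x_j$ whose parameter is $\Sigma_{jj}\le \sigma^2/32$ yields the desired $\exp(\sigma^2 t^2/2)$ bound after elementary bookkeeping.

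For (ii), I note that over $\mathcal{W}$ with $\|w\|\le R$ and $\|x_i\|\le 1$ we have $|2y_iw^\top x_i|\le 2R$, so the diagonal entries of $\Lambda(w)$ satisfy $[\Lambda(w)]_{ii} = 4s(2y_iw^\top x_i)(1-s(2y_iw^\top x_i))\ge c_R := 4 e^{-2R}/(1+e^{-2R})^2$. Therefore $\nabla^2 F_S(w)\succeq c_R XX^\top/n$ in the PSD order, and on any $2k$-sparse subspace the RSC constant of $XX^\top/n$ is controlled by Lemma~\ref{lemma:strong_convexity_subgaussian} (the same tool used for Corollary~\ref{corol:generalization_barw_linearreg}): with probability at least $1-\exp\{-c_0 n\}$, whenever $n\ge 4kc_1\log(p)/\lambda_{\min}(\Sigma)$, the covariance-style lower bound yields $\mu_{2k} \gtrsim c_R\lambda_{\min}(\Sigma)$. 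For (iii) and (iv), $\nabla^2 \ell(w;(x,y)) = 4s(1-s)xx^\top \preceq xx^\top \preceq I$, so $L=1$ suffices, while $0\le\ell(w;\xi)\le \log 2+2R = \mathcal{O}(R)$ gives $M$.

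Plugging these into Theorem~\ref{thrm:generalization_barw} yields excess risk of order $(k\sigma^2\log(p/\delta))/(n\mu_{2k}^2) = (k\sigma^2\log(p/\delta))/(n\,c_R^2\,\lambda_{\min}^2(\Sigma))$, and the expectation bound follows verbatim from the second part of Theorem~\ref{thrm:generalization_barw} once the side condition $\exp\{-c_0n\}\le\min\{0.5,\delta_n/(4R^2)\}$ is invoked. The main obstacle I expect is verifying (i) rigorously: the gradient is a product of a bounded but non-trivially correlated random factor with the sub-Gaussian $x_j$, so I must be careful to use the sub-Gaussian property of $x_j$ conditionally and to track how the constant $32$ in $\Sigma_{jj}\le\sigma^2/32$ exactly yields the $\sigma^2$ parameter required by Assumption~\ref{assump:gradient_sub_gaussian}; the exponential in $R$ in step (ii) is the only other place where care is needed to match the bound as stated, and any residual polynomial-in-$R$ factors get absorbed into the big-$\mathcal{O}$ together with $\exp(R)$.
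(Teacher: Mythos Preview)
Your proposal is correct and follows the same route as the paper: verify the hypotheses of Theorem~\ref{thrm:generalization_barw} for the logistic model and substitute. Steps (ii)--(iv) match the paper's argument almost verbatim (the paper uses the cruder bound $[\Lambda(w)]_{ii}\ge 4/(1+e^{2R})^2\ge e^{-4R}$, yielding $\mu_{2k}\ge \lambda_{\min}(\Sigma)/\exp(4R)$, but this is immaterial inside $\mathcal{O}(\cdot)$).

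The only point of departure is step~(i). The paper establishes sub-Gaussianity of $\nabla_j\ell(\bar w;\xi)$ via a tail bound: from $|\nabla_j\ell(\bar w;\xi)|\le 2|x_j|$ one gets $\mathbb{P}(|\nabla_j\ell(\bar w;\xi)|\ge t)\le\mathbb{P}(|x_j|\ge t/2)\le 2\exp(-4t^2/\sigma^2)$, and then invokes the standard tail-to-MGF conversion for centered variables (Lemma~1.5 of Rigollet). Your conditional-MGF route is more delicate than your sketch suggests. Hoeffding's lemma applied to the centered, $x$-conditionally bounded gradient yields $\mathbb{E}[\exp(t\nabla_j\ell)\mid x]\le\exp(2t^2x_j^2)$, not $\exp(2t|x_j|)$, and averaging $\exp(2t^2x_j^2)$ over sub-Gaussian $x_j$ converges only for small $t$. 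The crude bound $\exp(t\nabla_j\ell)\le\exp(2|t||x_j|)$ does survive averaging (giving $\le 2\exp(t^2\sigma^2/16)$), but the prefactor $2$ must then be removed using centeredness---which effectively forces you back through the tail-bound-to-MGF lemma the paper uses. So the paper's tail route is the cleaner way to resolve the obstacle you correctly flagged; otherwise the two arguments coincide.
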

A proof of Corollary~\ref{corol:generalization_barw_logisticreg} is given in Appendix~\ref{apdsect:proof_corollary_logistic}. The excess risk bound in this corollary matches the minimax optimal excess risk bound derived in~\cite{abramovich2018high} for model-size-penalized logistic regression.

\subsection{Black-box analysis: uniform convergence and stability}
\label{ssect:blackbox}
We now turn to analyze the generalization ability of $\ell_0$-ERM without explicitly accessing the underlying statistical generative models. Such a black-box setting is of special interest for understanding the model generalization behavior in a broader context of high-dimensional M-estimation beyond sparsity recovery. We study two types of bounds separately in this regime: the uniform convergence bounds which are simpler and more general, and the algorithm stability implied bounds which are tighter but at the price of imposing more stringent regularization conditions. Recollect that $\bar w= \argmin_{\|w\|_0\le k} F(w)$ denotes the population sparse minimizer.

\subsubsection{A uniform convergence bound}

Under mild conditions on the loss function and the domain of interest, it has been known that the uniform convergence bound $\sup_{w\in \mathcal{W}}\left|F(w) - F_S(w)\right| \le \mathcal{\tilde O}\left(\sqrt{p/n}\right)$ holds with high probability~\cite{bottou2008tradeoffs,shalev2009stochastic}. In the following result, we extend such a uniform convergence result to the $k$-sparse subspace. See Appendix~\ref{apdsect:proof_universe_generalizaion} for a proof of this result.

\begin{restatable}{theorem}{ERMBlackBoxBoundsUniform}\label{thrm:universe_generalizaion}
Assume that the domain of interest $\mathcal{W}\subset \mathbb{R}^p$ is bounded by $R$ and the loss function $\ell$ is $G$-Lipschitz continuous with respect to its first argument. Then for any $\delta\in (0,1)$, there exists a universal constant $c_0>0$ such that with probability at least $1-\delta$ over the random draw of $S$,
\[
\sup_{ w\in\mathcal{W}, \|w\|_0\le k}\left|F(w)  - F_S(w)\right| \le \mathcal{O}\left(\sqrt{\frac{GR(k\log(p)+ \log(1/\delta))}{n}}\right),
\]
provided that
\[
\frac{4G(\log(c_0/\delta)+10k\log(k))}{R} \le n \le \frac{k^{10}(\log(c_0/\delta) + 10k\log(k))}{GR}.
\]
\end{restatable}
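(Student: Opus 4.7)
My plan is to reduce the supremum over all $k$-sparse points in $\mathcal{W}$ to a finite union bound over (i) the $\binom{p}{k}$ possible supports, and (ii) a finite $\epsilon$-net on each bounded $k$-dimensional slice. First, for each subset $T\subseteq\{1,\ldots,p\}$ of size $k$, let $\mathcal{W}_T=\{w\in\mathcal{W}:\supp(w)\subseteq T\}$. Because $\mathcal{W}_T$ sits inside a $k$-dimensional Euclidean ball of radius $R$, a standard volume argument yields an $\epsilon$-net $\mathcal{N}_T\subseteq\mathcal{W}_T$ of cardinality at most $(3R/\epsilon)^k$. Using the Lipschitz assumption on $\ell$, and therefore on both $F$ and $F_S$ with the same constant $G$, for every $w\in\mathcal{W}_T$ there is $\tilde w\in\mathcal{N}_T$ with $|F(w)-F_S(w)-(F(\tilde w)-F_S(\tilde w))|\le 2G\epsilon$. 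Hence
\[
\sup_{\|w\|_0\le k,\,w\in\mathcal{W}}|F(w)-F_S(w)|\le 2G\epsilon + \max_{|T|=k}\max_{\tilde w\in\mathcal{N}_T}|F(\tilde w)-F_S(\tilde w)|.
\]

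Second, I would control the maximum on the right by concentration. The Lipschitz property together with $\|w\|\le R$ implies that each $\ell(w;\xi)-\ell(0;\xi)$ is bounded by $GR$; centering by its mean, Hoeffding's inequality gives, for any fixed $\tilde w$,
\[
\Pr\bigl(|F(\tilde w)-F_S(\tilde w)|>t\bigr)\le 2\exp\!\bigl(-cnt^2/(GR)\bigr)
\]
for a universal constant $c>0$ (in the form with $GR$ inside the square root, absorbing the relevant second-moment quantity rather than using the naive $(GR)^2$; this is where the precise shape of the statement is recovered). A union bound over the $\binom{p}{k}\le (ep/k)^k$ supports and the $(3R/\epsilon)^k$ net points incurs an extra factor of $\log$-size roughly $k\log(p)+k\log(R/\epsilon)$.

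Third, I would pick $\epsilon$ to trade off the discretization error $2G\epsilon$ against the concentration error. Choosing $\epsilon\asymp R/n$ makes $k\log(R/\epsilon)\asymp k\log n$, and the upper-bound hypothesis $n\le k^{10}(\log(c_0/\delta)+10k\log k)/(GR)$ is exactly what is needed so that $k\log n$ is absorbed into a constant multiple of $k\log(p)+\log(1/\delta)$ (by solving the implied inequality $\log n\lesssim \log p$ via the stated polynomial relation, as is standard in such settings). The lower-bound hypothesis on $n$ ensures that the residual $G\epsilon\asymp GR/n$ is dominated by the concentration term $\sqrt{GR(k\log p+\log(1/\delta))/n}$. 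Combining these yields the claimed bound.

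\textbf{Anticipated main obstacle.} The routine content is the net and union bound; the delicate part is the bookkeeping that turns $k\log(R/\epsilon)$ into $k\log(p)$ using the prescribed window on $n$, together with choosing the correct form of Hoeffding/Bernstein so that the constant multiplying $k\log p+\log(1/\delta)$ is $GR$ (inside the square root) rather than $(GR)^2$. Getting this constant right, while simultaneously making the covering radius small enough that the Lipschitz residual is negligible, is what forces the two-sided assumption on $n$ in the statement.
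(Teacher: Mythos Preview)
Your approach is essentially the same as the paper's: decompose over the $\binom{p}{k}$ possible supports, cover each $k$-dimensional slice, concentrate, and union bound. The paper just packages the ``cover + concentrate'' step differently: it first proves a standalone lemma giving, for any bounded $d$-dimensional domain, a Pollard-style uniform deviation bound of the form $\mathbb{P}(\sup_w |F(w)-F_S(w)|\ge \epsilon)\le \mathcal{N}(\epsilon,\mathcal{L},\ell_\infty)\exp(-n\epsilon^2/(GR))$, sets the covering radius equal to the deviation $\epsilon$, and solves directly for $\epsilon$ (this is where both sides of the sample-size window fall out: $\epsilon\le R/2$ gives the lower bound, and controlling the $(GR/\epsilon)^d$ covering factor gives the upper bound). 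It then applies this lemma with $d=k$ and $m=10$ on each support and union-bounds over $|\mathcal{J}|\le (ep/k)^k$. Your two-parameter version (separate net radius $\epsilon\asymp R/n$ and deviation $t$) arrives at the same place with slightly different bookkeeping.

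The one point you correctly flag as delicate---getting $GR$ rather than $(GR)^2$ inside the square root---is handled in the paper simply by citing Pollard's uniform bound, which already has the exponent $-n\epsilon^2/(GR)$. If you use naive Hoeffding with range $GR$, you will get $\exp(-cn t^2/(GR)^2)$ and the final bound becomes $GR\sqrt{(k\log p+\log(1/\delta))/n}$ rather than the stated $\sqrt{GR(k\log p+\log(1/\delta))/n}$. So to match the statement exactly you should either invoke the Pollard-type bound directly or justify a Bernstein-type inequality with variance proxy $O(GR)$; your ``absorbing the relevant second-moment quantity'' is precisely this step, and it is the only place where your sketch is not self-contained.
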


Comparing to the white-box results in Theorem~\ref{thrm:generalization_barw}, on one hand the uniform bounds established here are more general in the sense that they are applicable to both convex and non-convex problems without imposing any strong statistical conditions on the underlying nominal model. On the other hand, the $\mathcal{\tilde O}\left( \sqrt{k\log(p)/n} \right)$ uniform convergence rate is substantially slower than those white-box ones at the rate of $\mathcal{\tilde O}\left( k\log(p)/n \right)$.

We comment on the difference between the bound in Theorem~\ref{thrm:generalization_barw} and the essentially $ \mathcal{\tilde O} \left(\sqrt{k\log(p)/n}\right)$ uniform convergence bound established in~\cite[Theorem 1]{chen2018best} for sparsity constrained binary classification problems. First, our bound holds for real-valued Lipschitz continuous loss functions while that bound was tailored for binary loss functions based on Talagrand's concentration inequality~\cite{talagrand1994sharper}. Second, regarding the regularization condition, one sufficient condition to warrant the result in~\cite[Theorem 1]{chen2018best} is $p\vee n=\Omega\left( k^8\right)$ which could be fairly unrealistic even when $k$ is moderate (say, $k=10^3$) in practical problems. In contrast, our analysis basically requires that the sample size should be neither ``too small'' (i.e., $\Omega(k\log (k))$) nor ``too big'' (i.e., $\mathcal{O}(k^{10})$) with respect to sparsity level, which is expected to be more realistic in applications.

The derivation of the uniform  bounds does not hinge the optimality of the $\ell_0$-ERM estimator $w_{S,k}$ for risk minimization. We next show how to improve upon the above uniform convergence bounds via uniform stability arguments which can explicitly benefit from the optimality of the estimator when the loss function is convex.

\subsubsection{Uniform stability bounds}

Uniform stability, as formally defined in below, is a powerful toolbox for analyzing generalization bounds of M-estimators and their learning algorithms as well~\cite{bousquet2002stability,feldman2019high,hardt2016train,shalev2010learnability}.
\begin{definition}[Uniform Stability]\label{def:uniform_stability}
Let $A: \mathcal{X}^n \mapsto \mathcal{W}$ be a learning algorithm that maps a data set $S \in \mathcal{X}^n$ to a model $A(S) \in \mathcal{W}$. $A$ is said to have uniformly stability $\gamma$ with respect to a loss function $\ell: \mathcal{W} \times \mathcal{X}\mapsto \mathbb{R}$ if for any pair of datasets $S, S' \in \mathcal{X}^n$ that differ in a single element and every $x \in \mathcal{X}$, $\left|\ell(A(S); x) - \ell(A(S');x)\right| \le  \gamma$.
\end{definition}
For an instance, conventional ERM estimators with $\lambda$-strongly convex loss functions have uniform stability of order $\mathcal{O}(1/(\lambda n))$~\cite{bousquet2002stability}. This fundamental result then gives rise to the $\ell_2$-norm regularized ERM which introduces a penalty term $\frac{\lambda}{2}\|w\|^2$ to the convex loss with optimal choice $\lambda=\mathcal{O}(1/\sqrt{n})$ to balance empirical loss and generalization gap~\cite{shalev2009stochastic,feldman2018generalization,feldman2019high}.

\textbf{Challenge.} The existing stability analysis of ERM, however, seems not readily extendable to $\ell_0$-ERM as considered in this work. The reason behind this challenge is mainly because the stability of $\ell_0$-ERM relies heavily on the stability of its recovered supporting set $\supp(w_{S,k})$ which could be highly non-trivial to guarantee even when the loss function is strongly convex. In contrast, the convectional dense ERM is supported over the entire range of feature dimension and thus its supporting set is by nature unique and stable.

\textbf{Our ideas.} We provide two solutions to address the above sparsity pattern stability issue of $\ell_0$-ERM. The basic idea of our first solution is similar to that of the previous uniform convergence analysis: we first analyze the uniform stability of ERM restricted over any fixed feature index set of cardinality $k$, and then establish a high probability generalization gap bound for $\ell_0$-ERM via applying union probability arguments to all the possible $k$-sparse supporting sets. Although fairly simple in principle, one technical obstacle we need to overcome is that in many statistical learning problems the restricted strong convexity of loss function usually holds with high probability over data sample rather than uniformly. In order to handle the small failure probability of strong convexity, we propose to analyze a regularized variant of $\ell_0$-ERM with an unknown penalty term $\frac{\lambda}{2}\|w\|^2$ added to guarantee restricted uniform stability, and consequently show that the stability-induced generalization bound of the regularized estimator can be inherited by $\ell_0$-ERM with high chance.

Our second attempt is to directly analyze the stability of $\ell_0$-ERM with respect to its recovered supporting set. The key component is to show that under certain additional strong-signal condition, the support recovery of $\ell_0$-ERM is stable with high probability. More concretely, we can show that if there exists an underlying $k$-sparse vector $\tilde w$ with sufficiently strong signal-noise-ratio, then $\supp(w_{S,k})=\supp(w_{S'})=\supp(\tilde w)$ holds with high probability for any pair of datasets $S, S'$ that differ in a single element. Based on this key observation, we can further prove a set of $\mathcal{O}(\log^2(n)/\sqrt{n})$ (with high probability) and $\mathcal{O}(1/n)$ (in expectation) generalization bounds for $\ell_0$-ERM which are known to be nearly optimal up to logarithmic factors even for the conventional dense ERM.

\textbf{Main results.} In the following proposition, we establish a set of generalization bounds of $\ell_0$-ERM induced by the uniform stability of ERM restricted over any feature set of cardinality $k$.

\begin{restatable}{proposition}{ERMBlackBoxBoundsStability}\label{thrm:uniform_stability}
Assume that the loss function $\ell$ is $G$-Lipschitz continuous with respect to its first argument and $0\le\ell(w;\xi)\le M$ for all $w,\xi$. Suppose that $F_S$ is $\mu_{k}$-strongly convex with probability at least $1- \delta'_n$ over the random draw of $S$. For any $\delta\in (0,1)$, if $\delta'_n \le \frac{\delta}{2}\left(\frac{k}{ep}\right)^k$, then with probability at least $1-\delta$ over the random draw of sample set $S$ the generalization gap $F(w_{S,k}) - F_S(w_{S,k})$ is upper bounded by
\[
\mathcal{O}\left(\frac{G^{3/2}M^{1/4}}{\mu_k^{3/4}}\sqrt{\frac{\log(n)(\log(n/\delta)+k\log(ep/k))}{n}}\right),
\]
and (separately) the excess risk $F(w_{S,k})- F(\bar w)$ is upper bounded by
\[
\mathcal{O}\left(\frac{G^{3/2}M^{1/4}}{\mu_k^{3/4}}\sqrt{\frac{\log(n)(\log(n/\delta)+k\log(ep/k))}{n}} + M\sqrt{\frac{\log(1/\delta)}{n}}\right).
\]
\end{restatable}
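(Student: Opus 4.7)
The plan is to adapt the classical Bousquet--Elisseeff stability framework to sparsity-constrained ERM via an $\ell_2$-regularization device. For each fixed support $I \subset [p]$ with $|I| = k$, I will analyze the regularized restricted estimator
\[
w_{S,I}^\lambda := \argmin_{\supp(w) \subseteq I} F_S(w) + \tfrac{\lambda}{2}\|w\|^2,
\]
with a free parameter $\lambda > 0$. The quadratic penalty makes this objective $\lambda$-strongly convex on $\mathbb{R}^I$ \emph{deterministically}, so a standard calculation gives uniform stability $\gamma \lesssim G^2/(\lambda n)$, and the high-probability stability theorem of~\cite{feldman2019high} then controls $|F(w_{S,I}^\lambda) - F_S(w_{S,I}^\lambda)|$ for each fixed $I$. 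A union bound over the $\binom{p}{k} \le (ep/k)^k$ possible supports extends this simultaneously to all $I$, and in particular to the random $\hat I := \supp(w_{S,k})$; the hypothesis $\delta_n' \le (\delta/2)(k/ep)^k$ is calibrated precisely so that this union-bound step remains compatible with the subsequent probabilistic strong-convexity event, keeping the total failure probability within $\delta$.

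The second step is a perturbation argument that transfers the bound from $w_{S,\hat I}^\lambda$ to the unregularized $\ell_0$-ERM $w_{S,k} = w_{S,\hat I}$, and is where the hypothesis of $\mu_k$-restricted strong convexity enters. Conditioning on the strong-convexity event and using $F_S(0) \le M$ together with the first-order optimality of $w_{S,\hat I}$ on the subspace $\mathbb{R}^{\hat I}$ yields the a priori estimate $\|w_{S,\hat I}\|^2 \le 2M/\mu_k$. Comparing the optimality conditions of $w_{S,\hat I}^\lambda$ and $w_{S,\hat I}$ and invoking $\mu_k$-strong convexity then gives $\|w_{S,\hat I}^\lambda - w_{S,\hat I}\| \lesssim \sqrt{\lambda M}/\mu_k$, which through $G$-Lipschitz continuity of $\ell$ translates into perturbation bounds of order $G\sqrt{\lambda M}/\mu_k$ for both $F$ and $F_S$. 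The generalization gap of $w_{S,k}$ is then controlled by the stability term plus twice this perturbation term.

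Finally, $\lambda$ will be chosen to balance the stability contribution (scaling like $1/\lambda$) against the perturbation contribution (scaling like $\sqrt{\lambda}$); a straightforward one-variable minimization produces the composite prefactor $G^{3/2}M^{1/4}/\mu_k^{3/4}$ of the proposition, with the logarithmic factor $\log(n)$ and the combinatorial factor $k\log(ep/k)$ entering respectively through the Feldman--Vondr\'ak bound and the union bound over supports. For the excess-risk bound I will use the decomposition
\[
F(w_{S,k}) - F(\bar w) = \bigl[F(w_{S,k}) - F_S(w_{S,k})\bigr] + \bigl[F_S(w_{S,k}) - F_S(\bar w)\bigr] + \bigl[F_S(\bar w) - F(\bar w)\bigr],
\]
in which the middle term is non-positive because $\bar w$ is $k$-sparse and $w_{S,k}$ minimizes $F_S$ over $k$-sparse vectors, while the third term is bounded in high probability by Hoeffding's inequality applied to the bounded loss, contributing the extra $M\sqrt{\log(1/\delta)/n}$ summand that appears only in the excess-risk expression. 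The main technical obstacle throughout is the interplay between the exponential-in-$k$ union bound over supports and the fact that restricted strong convexity of $F_S$ holds only in probability, which would normally defeat any deterministic stability argument on $w_{S,k}$ itself; the $\lambda$-regularization device resolves this by making uniform stability hold unconditionally, at the cost of a controllable $\sqrt{\lambda M}$ perturbation back to the estimator of interest.
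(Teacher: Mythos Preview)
Your overall strategy matches the paper's proof almost exactly: introduce the auxiliary regularized estimator $w_{S,I}^\lambda$ on each fixed support, invoke Feldman--Vondr\'ak to control its generalization gap, union bound over the $\binom{p}{k}\le (ep/k)^k$ supports, then use restricted strong convexity to bound the perturbation $\|w_{S,\hat I}^\lambda - w_{S,\hat I}\|$ and transfer the bound back via Lipschitz continuity. The excess-risk decomposition and Hoeffding step are likewise identical to the paper's.

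There is, however, one quantitative slip that would prevent you from reaching the stated bound. You write $\|w_{S,\hat I}^\lambda - w_{S,\hat I}\| \lesssim \sqrt{\lambda M}/\mu_k$, but the argument you describe---comparing first-order optimality conditions under $\mu_k$-strong convexity---actually gives the sharper \emph{linear}-in-$\lambda$ bound that the paper uses. Concretely, on the support $\hat I$ one has $\nabla_{\hat I} F_{\lambda,S}(w_{S,\hat I}^\lambda)=0$ and $\nabla_{\hat I} F_{\lambda,S}(w_{S,\hat I})=\lambda\, w_{S,\hat I}$, so by $(\mu_k+\lambda)$-strong convexity of $F_{\lambda,S}$,
\[
\|w_{S,\hat I}^\lambda - w_{S,\hat I}\| \;\le\; \frac{\lambda}{\mu_k+\lambda}\,\|w_{S,\hat I}\| \;\le\; \frac{2\lambda\sqrt{M}}{\mu_k^{3/2}},
\]
using your a priori estimate $\|w_{S,\hat I}\|\le 2\sqrt{M/\mu_k}$. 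The resulting perturbation cost is $\sim \lambda G\sqrt{M}/\mu_k^{3/2}$, not $\sim G\sqrt{\lambda M}/\mu_k$. This distinction matters: balancing a stability term of order $A/\lambda$ against a perturbation term of order $B\lambda$ yields the advertised prefactor $G^{3/2}M^{1/4}/\mu_k^{3/4}$ and the $n^{-1/2}$ rate, whereas balancing $A/\lambda$ against $C\sqrt{\lambda}$ produces only an $n^{-1/3}$ rate with exponents $G^{4/3}M^{1/3}/\mu_k^{2/3}$. Your claimed final constants are therefore correct, but they follow from the linear perturbation bound rather than the square-root one you stated.
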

\begin{proof}[Proof sketch]
For a given feature index set $J\subseteq \{1,...,p\}$ with $|J|=k$, we first establish the generalization bounds for the restrictive estimator over $J$ defined by $w_{S\mid J} = \argmin_{\supp(w)\subseteq J} F_S(w)$. Since $F_S$ is only assumed to have strong convexity over $J$ with high probability, we propose to alternatively study an $\ell_2$-regularized variant of $w_{S\mid J}$ defined by
\[
w_{\lambda,S\mid J} := \argmin_{\supp(w)\subseteq J} \left\{F_{\lambda,S}(w):= F_S(w) + \frac{\lambda}{2}\|w\|^2\right\},
\]
which can be easily shown to be  uniformly stable, and thus according to the result from~\cite[Theorem 1.1]{feldman2019high} its generalization gap is upper bounded by $\mathcal{O}\left(G^2\log(n)\log(n/\delta)/(\lambda n) + \sqrt{\log(1/\delta)/n}\right)$. The next key step is to bound with high probability the discrepancy between $w_{S\mid J}$ and $w_{\lambda, S\mid J}$ as $\|w_{S\mid J} - w_{\lambda, S\mid J}\| \le \mathcal{O}(\lambda /(\mu_k + \lambda))$ in light of the (high probability) restricted strong convexity of $F_S$, which consequently indicate that the generalization guarantee of $w_{\lambda, S\mid J}$ can be passed over to $w_{S\mid J}$ with a small overhead of $\mathcal{O}(\lambda/(\mu_k + \lambda))$. The final step is to apply union probability arguments over all the possible $J$, of which the size is no more than $(ep/k)^k$, to obtain the desired bounds under proper selection of $\lambda$. Note that the $\ell_2$-regularized estimator $w_{\lambda, S\mid J}$ is purely introduced as a hypothetical tool for analysis and it is not involved in the actual computation of $\ell_0$-ERM. A full proof of this result is provided in Appendix~\ref{apdsect:proof_uniform_stability}.
\end{proof}
\begin{remark}
For restricted strongly convex problems, the bounds in Proposition~\ref{thrm:uniform_stability} are generally comparable to the corresponding uniform convergence bounds in Theorem~\ref{thrm:universe_generalizaion} but without having to require the domain of interest and the sample features to be bounded. It is noteworthy that for linear prediction models, the bound in Proposition~\ref{thrm:uniform_stability} can also be established in view of the covering number arguments in~\cite{kakade2009complexity}.
\end{remark}

Let us substantialize Proposition~\ref{thrm:uniform_stability} in the context of sparse logistic regression as discussed in Example II. Assume that $x_i$ are i.i.d. zero-mean sub-Gaussian distribution with covariance matrix $\Sigma\succ 0$ and $\Sigma_{jj}\le \sigma^2/32$. Suppose that $\|x_i\|\le 1$ for all $i$ and $\mathcal{W}\subset \mathbb{R}^p$ is bounded by $R$. Then it can be verified that the logistic loss $\ell(w;\xi_i) = \log(1+\exp(-2y_i w^\top x_i))$ satisfies $\ell(w;\xi_i)= \mathcal{O}(R)$ and it is $\mathcal{O}(1)$-Lipschitz continuous. Based on the proof arguments of Corollary~\ref{corol:generalization_barw_logisticreg} we know that there exist universal constants $c_0, c_1$ such that if $n \ge \frac{4c_1\sigma^2k \log (p)}{\lambda_{\min}(\Sigma)}$, then with probability at least $1- \exp\{-c_0 n\}$, $F_S(w)$ is $\mu_k$-strongly convex with $\mu_k\ge \frac{\lambda_{\min}(\Sigma)}{(1+\exp(2R))^2}$. It holds that $\delta'_n=\exp\{-c_0 n\}\le \frac{\delta}{2}\left(\frac{k}{ep}\right)^k$ as long as $n \ge \max\left\{ \frac{4kc_1 \log (p)}{\lambda_{\min}(\Sigma)}, \frac{k\log\left(ep/k\right)+ \log(2/\delta)}{c_0}\right\}$. Finally, by invoking Proposition~\ref{thrm:uniform_stability} we obtain that with probability at least $1 - \delta $ the generalization gap $\Delta_{S,k}$ and excess risk $F(w_{S,k})- F(\bar w)$ are (separately) upper bounded by
\[
\mathcal{O}\left(\frac{\exp(3R)R^{1/4}}{\lambda^{3/4}_{\min}(\Sigma)}\sqrt{\frac{\log(n)(\log(n/\delta)+k\log(p/k))}{n}} + R\sqrt{\frac{\log(1/\delta)}{n}}\right).
\]
%

Like in the uniform convergence bounds appeared in Theorem~\ref{thrm:universe_generalizaion}, the leading term $\sqrt{k\log(n)\log(ep/k)/n}$ in the above generalization bounds is resulted from the worst case uncertainty of the sparsity pattern for $\ell_0$-ERM. In the following main theorem, we further show that such an overhead can actually be removed by imposing additional strong-signal conditions to ensure the stability of support recovery of $\ell_0$-ERM. Here we denote $w_{\min}:=\min_{i \in \supp(w)} |w_i|$ as the smallest (in magnitude) non-zero entry of a sparse vector $w$.

\begin{restatable}{theorem}{ERMBlackBoxBoundsUniformSupport}\label{thrm:generalizaion_support_recovery_high}
Assume that the loss function $\ell$ is $G$-Lipschitz continuous with respect to its first argument and $0\le\ell(w;\xi)\le M$ for all $w,\xi$. Suppose that $F_S$ is $\mu_{2k}$-strongly convex with probability at least $1- \delta'_n$ over the random draw of $S$. Suppose that there exits a $k$-sparse vector $\tilde w$ satisfying
\[
\tilde w_{\min}> \frac{2\sqrt{2k}\|\nabla F(\tilde w)\|_\infty}{\mu_{2k}} + \frac{2G}{\mu_{2k}} \sqrt{\frac{2k\log(2p/\delta)}{2n}}
\]
for some $\delta\in (0, 1-\delta'_n)$. Then
\begin{itemize}[leftmargin=*]
\item[(a)]the generalization gap and excess risk in expectation are upper bounded by
\[
\mathbb{E}_{S}\left[F(w_{S,k}) - F(\bar w)\right] \le\left|\mathbb{E}_{S}\left[F(w_{S,k}) - F_S(w_{S,k}) \right] \right| \le \frac{4G^2}{\mu_{2k} n} + 2M(\delta + 2\delta'_n).
\]
\item[(b)] Moreover, for any $\lambda>0$, with probability at least $1-\delta-\delta'_n$ over the random draw of $S$, the generalization gap $\Delta_{S,k}$ and excess risk $F(w_{S,k}) - F(\bar w)$ are upper bounded (separately) by
\[
\mathcal{O}\left(\frac{G^{3/2}M^{1/4}}{\mu_{2k}^{3/4}}\sqrt{\frac{\log(n)\log(n/\delta)}{n}} + M\sqrt{\frac{\log(1/\delta)}{n}}\right).
\]
\end{itemize}
\end{restatable}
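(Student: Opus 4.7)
The plan is to exploit the strong-signal hypothesis to show that $\supp(w_{S,k})$ coincides with the deterministic support $\supp(\tilde w)$ with high probability, and thereby reduce the analysis to a classical stability argument for a $k$-dimensional strongly convex ERM on that fixed support. Because the sparsity pattern is pinned down by the data, the $k\log(ep/k)$ overhead that arose from a union bound over all candidate supports in Proposition~\ref{thrm:uniform_stability} is eliminated entirely.

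The first step is to establish support recovery. Starting from the optimality inequality $F_S(w_{S,k}) \le F_S(\tilde w)$ combined with the restricted strong convexity of $F_S$ at sparsity level $2k$ applied to the $2k$-sparse difference $w_{S,k}-\tilde w$, a standard RSC calculation yields
\[
\|w_{S,k} - \tilde w\|_2 \le \frac{2\sqrt{2k}}{\mu_{2k}}\|\nabla F_S(\tilde w)\|_\infty.
\]
I would then split $\|\nabla F_S(\tilde w)\|_\infty \le \|\nabla F(\tilde w)\|_\infty + \|\nabla F_S(\tilde w) - \nabla F(\tilde w)\|_\infty$ and control the empirical-process term by Hoeffding's inequality applied coordinate-wise (each $\nabla_j\ell(\tilde w;\xi)$ lies in $[-G,G]$ by $G$-Lipschitzness), followed by a union bound over the $p$ coordinates. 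The hypothesized lower bound on $\tilde w_{\min}$ is exactly what is needed so that the resulting $\ell_\infty$ estimate on $w_{S,k}-\tilde w$ is strictly less than $\tilde w_{\min}$, forcing $\supp(w_{S,k}) = \supp(\tilde w)$. Repeating the same argument for any neighboring sample $S'$ (differing from $S$ in one element) gives $\supp(w_{S',k}) = \supp(\tilde w)$ with only slightly inflated failure probability.

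For part (a), on the joint good event $E$ (with $\mathbb{P}(E^c)\le \delta + 2\delta'_n$) both $w_{S,k}$ and $w_{S',k}$ are the unconstrained minimizers of $F_S, F_{S'}$ over the common $k$-dimensional coordinate subspace indexed by $\supp(\tilde w)$, on which the restriction of the objective remains $\mu_{2k}$-strongly convex. The classical Bousquet--Elisseeff calculation then gives conditional uniform stability $4G^2/(\mu_{2k}n)$. Plugging this into the standard stability-to-generalization reduction on $E$, and handling $E^c$ by the boundedness $0\le\ell\le M$ (worst-case contribution $2M\,\mathbb{P}(E^c)$), delivers the claimed bound $4G^2/(\mu_{2k}n) + 2M(\delta+2\delta'_n)$.

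For part (b), on $E$ the estimator $w_{S,k}$ is exactly the unconstrained ERM on the fixed $k$-sparse support, so the high-probability uniform-stability bound of Feldman--Vondrak applies. Because restricted strong convexity holds only with high probability (not uniformly over $S$), I would use the same $\ell_2$-regularization proxy device as in the proof of Proposition~\ref{thrm:uniform_stability}: introduce a hypothetical $\ell_2$-regularized restricted ERM $w_{\lambda,S\mid J_0}$ with $J_0=\supp(\tilde w)$, invoke the Feldman--Vondrak bound on this uniformly stable object, and then translate back to $w_{S,k}$ at an overhead of order $\mathcal{O}(\lambda/(\mu_{2k}+\lambda))$ in parameter distance (and hence $G$ times that in loss). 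Balancing $\lambda$ against this overhead produces the stated leading term $G^{3/2}M^{1/4}\mu_{2k}^{-3/4}\sqrt{\log(n)\log(n/\delta)/n}$, crucially with no $k\log(ep/k)$ factor since no union bound over candidate supports is required. The excess-risk bound then follows from the generalization-gap bound together with $F_S(w_{S,k})\le F_S(\bar w)$ and a Hoeffding $M\sqrt{\log(1/\delta)/n}$ tail bound on $F_S(\bar w)-F(\bar w)$. I expect the main obstacle to be Step 1: getting the gradient concentration sharp enough (with the precise $\sqrt{2k\log(2p/\delta)/(2n)}$ constant) to match the signal-strength hypothesis exactly, and showing that support recovery holds \emph{jointly} for $S$ and its one-sample neighbor $S'$ with only a modest inflation of the failure probability, which is what licenses invoking the uniform-stability machinery on $E$.
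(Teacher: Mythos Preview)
Your proposal is correct and follows essentially the same route as the paper: establish $\supp(w_{S,k})=\supp(\tilde w)$ via RSC plus gradient concentration (the paper's Lemma~\ref{lemma:support_recovery_bar_w}), then for (a) run the Bousquet--Elisseeff stability argument on the common support and absorb the bad event via the $M$-bound, and for (b) apply the $\ell_2$-regularization-proxy/Feldman--Vondr\'ak device (Lemma~\ref{lemma:support_stability}) over the single fixed support $\tilde J=\supp(\tilde w)$, eliminating the union bound over candidate supports. One small simplification: for part (b) you do not need joint support recovery for $S$ and its neighbor $S'$---once $\supp(w_{S,k})=\tilde J$ holds, $w_{S,k}$ coincides with $w_{S\mid\tilde J}$, and Lemma~\ref{lemma:support_stability} already delivers the Feldman--Vondr\'ak bound for that restricted ERM as an algorithm over all $S$, so the paper simply intersects the event $\{\supp(w_{S,k})=\tilde J\}$ with the high-probability event from Lemma~\ref{lemma:support_stability}.
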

\begin{proof}[Proof sketch]
The starting point of the proof is to show via a technical lemma (Lemma~\ref{lemma:support_recovery_bar_w}) that the imposed strong-signal condition on $\bar w$ leads to $\supp(w_{S,k})=\supp(\tilde w)$ with high probability. Based on such a simple observation we can further show that the support recovery stability $\supp(w_{S,k})=\supp(w_{S',k})$ holds with high probability for any pair of datasets $S, S' \in \mathcal{X}^n$ that differ in a single element, which in turn implies the high probability loss value stability bound $|\ell(w_{S,k};\xi) - \ell(w_{S',k};\xi) |\le \mathcal{O}(G^2/(\mu_{2k} n))$. Then the in expectation generalization bounds in part(a) can be proved by applying the standard unform stability arguments with proper handling of the small failure probability of loss value stability. Finally, the desired high probability generalization bounds in part (b) can be proved by invoking the technical Lemma~\ref{lemma:support_stability} developed for Proposition~\ref{thrm:uniform_stability} along with a careful manipulation of the involved tail bounds. A full proof of this theorem is provided in Appendix~\ref{apdsect:proof_theorem_generalizaion_support_recovery_high}.
\end{proof}
\begin{remark}
Consider the sparse linear and logistic regression problems as presented in Example I \& II with the nominal model $\bar w$ satisfying $\nabla F(\bar w)=0$. In these two cases, provided that $n$ is sufficiently large, we know from Corollary~\ref{corol:generalization_barw_linearreg} and~\ref{corol:generalization_barw_logisticreg} that there exists some universal constant $c_0>0$ such that $\delta'_n=\exp\{-c_0 n\}$. Choose $\delta=\mathcal{O}\left(G^2/(M\mu_{2k}n)\right)$ and suppose that $n$ is sufficiently large such that $\delta'_n \le \mathcal{O} \left(G^2/(M\mu_{2k}n)\right)$. If we assume $\bar w_{\min}= \Omega\left(\sqrt{k\log(np)/n}\right)$, then the in expectation generalization bounds in the part (a) of Theorem~\ref{thrm:generalizaion_support_recovery_high} scale as $\mathcal{O} \left(G^2/(\mu_{2k}n)\right)$ which is not relying on the sparsity level $k$. To compare with $\ell_2$-ERM, based on the discussions in Section~\ref{sssect:comparsion_l2ERM} we can see that such a $\mathcal{O} \left(G^2/(\mu_{2k}n)\right)$ of $\ell_0$-ERM is substantially tighter than the $\mathcal{O}(1/\sqrt{n})$ in expectation bound of $\ell_2$-ERM in the considered setting of Theorem~\ref{thrm:generalizaion_support_recovery_high}. We remark that such a benefit mainly attributes to the restricted strong convexity of the empirical risk function.
\end{remark}
\begin{remark}
The high probability generalization bounds in the part (b) of Theorem~\ref{thrm:generalizaion_support_recovery_high} scales as $\mathcal{\tilde O}\left(\log(n)/\sqrt{n} \right)$. Specially, for the sparse linear and logistic regression problems considered in the previous remark, if assuming $\bar w_{\min} = \Omega (\sqrt{k\log(np)/n})$, then with high probability the generalization gap and excess risk are bounded by $\mathcal{\tilde O}(\log(n)/\sqrt{n})$, which up to logarithmic factors are tighter than those in Theorem~\ref{thrm:generalization_barw}.
We remark that the term $\mathcal{\tilde O}(\log(n)/\sqrt{n})$ is nearly tight because even for an algorithm that outputs a fixed function the sampling error term $\mathcal{\tilde O}(1/\sqrt{n})$ is necessary.
\end{remark}

\section{Generalization Bounds for Iterative Hard Thresholding}
\label{sect:generalization_iht}

In this section, we demonstrate the applications of our sparse generalization theory to deriving the generalization bounds of the IHT algorithm, as summarized in~\eqref{prob:iht}, for approximately solving $\ell_0$-ERM. The rate of convergence and parameter estimation error of IHT have been extensively analyzed under RIP (or restricted strong condition number) bounding conditions~\cite{bahmani2013greedy,yuan2014gradient}. The RIP-type conditions, however, are unrealistic in many applications. To remedy this deficiency, sparsity-level relaxation strategy was considered in~\cite{jain2014iterative,yuan2018gradient} with which the high-dimensional estimation consistency of IHT can be established under arbitrary restricted strong condition number. In order to make our analysis more realistic for high-dimensional problems, we choose to work with the following RIP-condition-free convergence rate bound, which is essentially from~\cite{jain2014iterative}, for IHT invoking on the empirical risk $F_S$.
\begin{lemma}[Convergence rate of IHT~\cite{jain2014iterative}]\label{lemma:convergence_iht}
Assume that $F_S$ is $L$-strongly smooth and $\mu_{3k}$-strongly convex. Set $\eta = \frac{2}{3L}$. Consider $\bar k$ such that $k\ge \frac{32 L^2}{\mu_{3k}^2}\bar k$. Let $\bar w_{S,k} =\argmin_{\|w\|_0\le \bar k} F_S(w)$. Then  IHT outputs $w_{S,k}^{(t)}$ satisfying $F_S(w_{S,k}^{(t)}) \le F_S(\bar w_{S,k}) + \epsilon$, after $t\ge\mathcal{O}\left(\frac{L}{\mu_{3k}}\log \left(\frac{F_S(w^{(0)})}{\epsilon}\right)\right)$ steps of iteration.
\end{lemma}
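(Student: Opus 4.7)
The plan is to show that one IHT step reduces the empirical risk suboptimality gap $F_S(w_{S,k}^{(t)}) - F_S(\bar w_{S,k})$ by a geometric factor, so that $\mathcal{O}((L/\mu_{3k})\log(1/\epsilon))$ iterations suffice to reach any target accuracy. The argument rests on two ingredients: a per-step descent inequality that simultaneously exploits $L$-strong smoothness of $F_S$ and the defining optimality of hard thresholding, and a use of $\mu_{3k}$-restricted strong convexity to convert the per-step decrement into a multiplicative decrease on the gap. The sparsity relaxation $k \ge 32 L^2 \bar k/\mu_{3k}^2$ is exactly what forces the resulting contraction factor to be strictly below $1$ in the absence of any RIP assumption.

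For the descent step, I would let $v^{(t)} := w_{S,k}^{(t-1)} - \eta \nabla F_S(w_{S,k}^{(t-1)})$ and invoke $L$-strong smoothness to get
\[
F_S(w_{S,k}^{(t)}) - F_S(w_{S,k}^{(t-1)}) \le \langle \nabla F_S(w_{S,k}^{(t-1)}), w_{S,k}^{(t)} - w_{S,k}^{(t-1)}\rangle + \frac{L}{2}\|w_{S,k}^{(t)} - w_{S,k}^{(t-1)}\|^2.
\]
Substituting $\nabla F_S(w_{S,k}^{(t-1)}) = (w_{S,k}^{(t-1)} - v^{(t)})/\eta$ together with $\eta = 2/(3L)$ rearranges the right-hand side into $\tfrac{3L}{4}\bigl(\|w_{S,k}^{(t)} - v^{(t)}\|^2 - \|w_{S,k}^{(t-1)} - v^{(t)}\|^2\bigr) - \tfrac{L}{4}\|w_{S,k}^{(t)} - w_{S,k}^{(t-1)}\|^2$. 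The optimality of hard thresholding gives $\|w_{S,k}^{(t)} - v^{(t)}\|^2 \le \|z - v^{(t)}\|^2$ for every $k$-sparse $z$; I would invoke this with a carefully chosen $z$ supported on a subset of $\supp(w_{S,k}^{(t-1)}) \cup \supp(\bar w_{S,k})$ that agrees with $\bar w_{S,k}$ on $\supp(\bar w_{S,k})$. Since that union together with $\supp(w_{S,k}^{(t)})$ has cardinality at most $3k$, every subsequent use of smoothness/RSC happens on supports of size $\le 3k$, which explains why the statement invokes $\mu_{3k}$ rather than $\mu_k$.

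Combining the descent bound with the $\mu_{3k}$-RSC inequality
\[
F_S(\bar w_{S,k}) \ge F_S(w_{S,k}^{(t-1)}) + \langle \nabla F_S(w_{S,k}^{(t-1)}), \bar w_{S,k} - w_{S,k}^{(t-1)}\rangle + \frac{\mu_{3k}}{2}\|\bar w_{S,k} - w_{S,k}^{(t-1)}\|^2
\]
and simplifying yields a contraction of the form $F_S(w_{S,k}^{(t)}) - F_S(\bar w_{S,k}) \le \rho\bigl(F_S(w_{S,k}^{(t-1)}) - F_S(\bar w_{S,k})\bigr)$ with $\rho = 1 - \Theta(\mu_{3k}/L)$. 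Iterating over $t$ steps gives $F_S(w_{S,k}^{(t)}) - F_S(\bar w_{S,k}) \le \rho^t (F_S(w^{(0)}) - F_S(\bar w_{S,k}))$, and solving for the smallest $t$ that drives the right-hand side below $\epsilon$ produces the claimed $t = \mathcal{O}((L/\mu_{3k})\log(F_S(w^{(0)})/\epsilon))$. The main technical obstacle is the hard-thresholding residual in the second step: when $w_{S,k}^{(t)}$ retains only the top $k$ entries of $v^{(t)}$, it may drop coordinates lying in $\supp(\bar w_{S,k})$, and bounding this residual relative to the progress along the joint support is exactly what consumes the quantitative relaxation $k \ge 32 L^2 \bar k/\mu_{3k}^2$---the slack between $k$ and $\bar k$ is what ensures the thresholding error is a small fraction of the current suboptimality gap.
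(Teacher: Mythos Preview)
The paper does not prove Lemma~\ref{lemma:convergence_iht}; it is quoted directly from~\cite{jain2014iterative} and used as a black box. Your outline follows the standard argument in that reference: a smoothness-based descent step written in terms of the pre-thresholding iterate $v^{(t)}$, the optimality of $\mathrm{H}_k$ invoked against a carefully chosen $k$-sparse comparator living on $\supp(w_{S,k}^{(t-1)})\cup\supp(\bar w_{S,k})$, and restricted strong convexity over supports of size at most $3k$ to close the recursion into a geometric contraction on the suboptimality gap. You have correctly located where the relaxation $k\ge 32L^2\bar k/\mu_{3k}^2$ is spent---controlling the portion of $\supp(\bar w_{S,k})$ that the thresholding step may discard---and the resulting contraction factor $1-\Theta(\mu_{3k}/L)$ yields the stated iteration complexity. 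The proposal is sound and matches the source proof in structure; the only thing left implicit is the precise algebra that converts the thresholding residual bound into the constant $32$, which in~\cite{jain2014iterative} is handled by a counting argument on the coordinates of $v^{(t)}$ outside $\supp(\bar w_{S,k})$.
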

In light of the established generalization bounds for $\ell_0$-ERM and Lemma~\ref{lemma:convergence_iht}, we are in the position to analyze the generalization performance of IHT. Similar to the analysis of $\ell_0$-ERM, we separately consider a white-box statistical regime where the nominal model is assumed to be truly sparse, and a black-box statistical regime where the underlying data generation model is presumed not explicitly accessible.

\subsection{White-box generalization results}

We consider the same white-box statistical regime as studied in Section~\ref{ssect:whitebox}. Denote $\Delta_{S,k}^{(t)}:=F(w^{(t)}_{S,k}) - F_S(w^{(t)}_{S,k})$. Our main results for this setting are summarized in the following corollary of Theorem~\ref{thrm:generalization_barw}. A proof of this corollary can be found in Appendix~\ref{apdsect:proof_generalization_barw_iht}.
\begin{restatable}{corollary}{IHTWhiteBoxBounds}\label{corol:generalization_barw_iht}
Assume that $\bar w$ is a $\bar k$-sparse vector satisfying Assumption~\ref{assump:gradient_sub_gaussian}. Suppose that $F_S$ is $\mu_{3k}$-strongly convex with probability at least $1- \delta'_n$ over sample $S$. Assume that the loss function $\ell$ is $L$-strongly smooth with respect to its first argument. Suppose that $k\ge \frac{32 L^2}{\mu_{3k}^2}\bar k$. Then for any $\delta \in (0, 1- \delta'_n)$ and any $\epsilon>0$, IHT invoking on $F_S(w)$ with step-size $\eta = \frac{2}{3L}$ and sufficiently large $t\ge\mathcal{O}\left(\frac{L}{\mu_{3k}}\log \left(\frac{n\mu_{3k}}{k\sigma^2\log(p/\delta)}\right)\right)$ will output $w^{(t)}_{S,k}$ such that the following excess risk bound holds with probability at least $1 - \delta - \delta'_n$ over $S$,
\[
F(w^{(t)}_{S,k}) - F(\bar w) \le \mathcal{O}\left(\frac{L}{\mu_{3k}^2}\left(\frac{k\sigma^2\log (p/\delta)}{n}\right) \right).
\]
Moreover, assume that the loss $\ell$ is $L$-strongly smooth with respect to its first argument and the domain of interest $\mathcal{W}\subset \mathbb{R}^p$ is bounded by $R$. Let $\delta_n = \sigma^2(72 + 16\log p)/n$. If $\delta'_n \le \min\left\{0.5, \frac{\delta_n}{4R^2}\right\}$, then
\[
\mathbb{E}_S \left[F(w^{(t)}_{S,k}) - F(\bar w)\right] \le \mathbb{E}_S \left[F(w^{(t)}_{S,k}) - F_S(w^{(t)}_{S,k})\right]\le  \mathcal{O}\left(\frac{L}{\mu_{3k}^2}\left(\frac{k\sigma^2\log (p/\delta)}{n}\right) \right).
\]
\end{restatable}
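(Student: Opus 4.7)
The plan is to combine the convergence guarantee of IHT in Lemma~\ref{lemma:convergence_iht} with the restricted strong convexity plus sub-Gaussian gradient machinery already developed in the proof of Theorem~\ref{thrm:generalization_barw}. Lemma~\ref{lemma:convergence_iht} tells us that once $t \ge \mathcal{O}\left(\frac{L}{\mu_{3k}}\log\frac{F_S(w^{(0)})}{\epsilon}\right)$, the IHT iterate obeys $F_S(w^{(t)}_{S,k}) \le F_S(\bar w_{S,\bar k}) + \epsilon$, where $\bar w_{S,\bar k} = \argmin_{\|w\|_0\le \bar k} F_S(w)$. Since $\bar w$ is itself $\bar k$-sparse and therefore feasible for this inner problem, we immediately get $F_S(w^{(t)}_{S,k}) \le F_S(\bar w) + \epsilon$. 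This recasts the corollary as an $\epsilon$-perturbed version of the exact $\ell_0$-ERM analysis carried out in Theorem~\ref{thrm:generalization_barw}, allowing me to re-run the same RSC chain with an extra $\epsilon$ slack.

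The core step is to push this approximate optimality through the RSC inequality. Both $w^{(t)}_{S,k}$ and $\bar w$ are at most $k$-sparse, so their difference is $2k$-sparse, and $\mu_{3k}$-RSC automatically implies RSC on $2k$-sparse directions with the same constant. Combined with $\|w^{(t)}_{S,k} - \bar w\|_1 \le \sqrt{2k}\, \|w^{(t)}_{S,k} - \bar w\|$ this yields
\[
\frac{\mu_{3k}}{2}\|w^{(t)}_{S,k} - \bar w\|^2 \le F_S(w^{(t)}_{S,k}) - F_S(\bar w) + \sqrt{2k}\, \|\nabla F_S(\bar w)\|_\infty\, \|w^{(t)}_{S,k} - \bar w\| \le \epsilon + \sqrt{2k}\, \|\nabla F_S(\bar w)\|_\infty\, \|w^{(t)}_{S,k} - \bar w\|.
\]
Under Assumption~\ref{assump:gradient_sub_gaussian}, a standard maximal sub-Gaussian tail bound (exactly the concentration step used inside Theorem~\ref{thrm:generalization_barw}) delivers $\|\nabla F_S(\bar w)\|_\infty = \mathcal{O}(\sigma\sqrt{\log(p/\delta)/n})$ with probability at least $1-\delta$. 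Solving the resulting quadratic in $\|w^{(t)}_{S,k} - \bar w\|$ then produces $\|w^{(t)}_{S,k} - \bar w\|^2 = \mathcal{O}\left(\epsilon/\mu_{3k} + k\sigma^2\log(p/\delta)/(n\mu_{3k}^2)\right)$. The $L$-smoothness of $F$ together with $\nabla F(\bar w) = 0$ (the zero-mean half of Assumption~\ref{assump:gradient_sub_gaussian}) then lifts this estimation error to the excess risk via $F(w^{(t)}_{S,k}) - F(\bar w) \le \frac{L}{2}\|w^{(t)}_{S,k} - \bar w\|^2$.

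To finalize the high-probability statement I would take the IHT accuracy $\epsilon = \Theta(k\sigma^2\log(p/\delta)/(n\mu_{3k}))$, which makes the optimization overhead $L\epsilon/\mu_{3k}$ comparable to the statistical term and pins the iteration budget to exactly $t \ge \mathcal{O}\left(\frac{L}{\mu_{3k}}\log\frac{n\mu_{3k}}{k\sigma^2\log(p/\delta)}\right)$ as claimed. The in-expectation bound is then handled by replicating the argument used for Theorem~\ref{thrm:generalization_barw}'s expectation statement: on the RSC event (which holds with probability at least $1-\delta'_n$) one invokes the just-derived high-probability excess risk bound, while on the complementary event the loss is controlled by $MR^2$ on the bounded domain, and the hypothesis $\delta'_n \le \delta_n/(4R^2)$ with $\delta_n = \sigma^2(72+16\log p)/n$ ensures this bad-event contribution is absorbed into the target rate $\mathcal{O}(Lk\sigma^2\log p/(n\mu_{3k}^2))$. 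I do not expect any single step to pose a real obstacle; the one care-point is keeping the two sparsity cones of size $k$ (iterate) and $\bar k$ (target) distinct throughout so that the RSC parameter $\mu_{3k}$ in the final bound is consistent with the hypotheses of Lemma~\ref{lemma:convergence_iht}.
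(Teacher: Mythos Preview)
Your proposal is correct and follows essentially the same route as the paper: invoke Lemma~\ref{lemma:convergence_iht} to get $F_S(w^{(t)}_{S,k}) \le F_S(\bar w)+\epsilon$, push this through the RSC estimation-error inequality (the paper packages exactly your quadratic step as Lemma~\ref{lemma:estimation_error}), convert to excess risk via $L$-smoothness and $\nabla F(\bar w)=0$, and finally balance $\epsilon = \Theta(k\sigma^2\log(p/\delta)/(n\mu_{3k}))$. One small clarification on the in-expectation part: rather than ``invoking the high-probability excess risk bound'' on the good event (that bound carries a $\delta$-dependent constant that does not integrate cleanly), the paper uses the \emph{deterministic} estimation-error inequality $\|w^{(t)}_{S,k}-\bar w\|^2 \le 16k\|\nabla F_S(\bar w)\|_\infty^2/\mu_{3k}^2 + 4\epsilon/\mu_{3k}$ on the RSC event and then takes conditional expectations of $\|\nabla F_S(\bar w)\|_\infty^2$ via Lemmas~\ref{lemma:L2_error_expectation} and~\ref{lemma:sub_Gaussian_bounds}; this is exactly the ``replicating Theorem~\ref{thrm:generalization_barw}'' step you reference, so your plan goes through once phrased this way.
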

\begin{remark}
This result conveys a main message that the generalization bounds of IHT with sufficient iteration are controlled by those of $\ell_0$-ERM.
\end{remark}

\subsection{Black-box generalization results}

We further consider the black-box statistical setting as studied in Section~\ref{ssect:blackbox} which is of more interest in machine learning problems. It is straightforward to verify that the uniform convergence bounds in Theorem~\ref{thrm:universe_generalizaion} readily applies to the output $w^{(t)}_{S,k}$ of IHT. Next we derive a set of generalization bounds for IHT based on uniform stability arguments.
In order to make sure that the output $w_{S,k}^{(t)}$ at the end of iteration has uniform stability, we propose to slightly modify it as $\tilde w_{S,k}^{(t)}$ which fully minimizes $F_{\lambda,S}$ over the support of $\supp(w_{S,k}^{(t)})$, i.e.,
\[
\tilde w_{S,k}^{(t)} =\argmin_{w\in \mathcal{W}} F_{S}(w), \quad \st \supp(w) =\supp(w_{S,k}^{(t)}).
\]
The following result is an application of Proposition~\ref{thrm:uniform_stability} to the IHT algorithm, which is proved in Appendix~\ref{apdsect:proof_uniform_stability_iht}.
\begin{restatable}{corollary}{IHTBlackBoxBoundsStability}\label{corol:uniform_stability_iht}
Assume that the loss function $\ell$ is $L$-strongly smooth and $G$-Lipschitz continuous with respect to its first argument and $0\le\ell(w;\xi)\le M$ for all $w,\xi$. Suppose that $F_S$ is $\mu_{3k}$-strongly convex with probability at least $1- \delta'_n$ over sample $S$. Let $\bar w =\argmin_{\|w\|_0\le \bar k} F(w)$ with $k\ge \frac{32 L^2}{\mu_{3k}^2}\bar k$. Set the step-size $\eta = \frac{2}{3L}$. For any $\delta \in (0, 1)$, if $\delta'_n \le \frac{\delta}{2}\left(\frac{k}{ep}\right)^k$, then with probability at least $1-\delta$ over the random draw of sample set $S$, after sufficiently large $t\ge\mathcal{O}\left(\frac{L}{\mu_{3k}}\log \left(\frac{n}{k\log(n)\log(p/k)}\right)\right)$ rounds of IHT iteration, the generalization gap $F(\tilde w^{(t)}_{S,k}) - F_S(\tilde w^{(t)}_{S,k})$ and the excess risk $F(\tilde w^{(t)}_{S,k}) - F(\bar w)$ is separately upper bounded by
\[
\mathcal{O}\left(\frac{G^{3/2}M^{1/4}}{\mu_k^{3/4}}\sqrt{\frac{\log(n)(\log(n/\delta)+k\log(p/k))}{n}} + M\sqrt{\frac{\log(1/\delta)}{n}}\right).
\]
\end{restatable}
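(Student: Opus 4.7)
The plan is to recognize that, by construction, $\tilde w_{S,k}^{(t)}$ is the exact minimizer of $F_S$ restricted to the data-dependent support $J_\star := \supp(w_{S,k}^{(t)})$, which has cardinality at most $k$. Consequently the full machinery used to prove Proposition~\ref{thrm:uniform_stability} transfers directly: one controls the restricted-ERM generalization gap for every fixed $J \subseteq \{1,\ldots,p\}$ of size at most $k$, and then union-bounds over the $\binom{p}{\le k}\le (ep/k)^k$ possible supports. Since the hypothesis $\delta'_n \le (\delta/2)(k/(ep))^k$ matches that of Proposition~\ref{thrm:uniform_stability} exactly, the restricted strong convexity of $F_S$ holds simultaneously on every candidate support with probability at least $1-\delta/2$.

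First I would reproduce the generalization-gap bound. For each fixed $J$, introduce the $\ell_2$-regularized auxiliary minimizer $w_{\lambda,S\mid J}:=\argmin_{\supp(w)\subseteq J}\{F_S(w)+\frac{\lambda}{2}\|w\|^2\}$, apply the uniform-stability bound of Feldman--Vondr\'ak to it, and transfer the bound to $w_{S\mid J}$ via the proximity estimate $\|w_{S\mid J}-w_{\lambda,S\mid J}\|=\mathcal{O}(\lambda/(\mu_k+\lambda))$, which holds under the restricted $\mu_k$-strong convexity of $F_S$ (noting that $\mu_{3k}$-strong convexity trivially implies $\mu_k$-strong convexity with $\mu_k\ge\mu_{3k}$). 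Union-bounding over admissible $J$ and evaluating at $\tilde w_{S,k}^{(t)}=w_{S\mid J_\star}$ yields the generalization-gap term in the stated bound verbatim from Proposition~\ref{thrm:uniform_stability}.

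Next I would handle the excess risk by bounding $F_S(\tilde w_{S,k}^{(t)})-F(\bar w)$. By the defining refinement property, $F_S(\tilde w_{S,k}^{(t)})\le F_S(w_{S,k}^{(t)})$. Lemma~\ref{lemma:convergence_iht}, together with the sparsity relaxation $k\ge 32L^2\bar k/\mu_{3k}^2$, guarantees $F_S(w_{S,k}^{(t)})\le F_S(\bar w_{S,\bar k})+\epsilon$ after $t=\mathcal{O}((L/\mu_{3k})\log(F_S(w^{(0)})/\epsilon))$ steps, where $\bar w_{S,\bar k}:=\argmin_{\|w\|_0\le\bar k}F_S(w)$; since $\bar w$ is itself $\bar k$-sparse, $F_S(\bar w_{S,\bar k})\le F_S(\bar w)$. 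Choosing $\epsilon=\Theta(1/\sqrt{n})$ reproduces the stated iteration count (the $F_S(w^{(0)})\le M$ factor is absorbed into the logarithm). A one-sided Hoeffding inequality, applicable because $\ell\in[0,M]$, then gives $F_S(\bar w)-F(\bar w)\le M\sqrt{\log(2/\delta)/n}$ with probability at least $1-\delta/2$. Summing the generalization-gap bound, $\epsilon$, and this concentration term after a standard allocation of $\delta$ produces the claimed excess-risk inequality.

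The hard part will be subtle rather than computational: the support $J_\star=\supp(w_{S,k}^{(t)})$ is itself a complicated random object depending on the whole IHT trajectory, so one cannot simply condition on a fixed $J$ and invoke stability. The union bound over all $(ep/k)^k$ supports is precisely the device that removes this data dependence, at the unavoidable cost of the $\sqrt{k\log(ep/k)/n}$ factor appearing in the final bound. A minor but easy-to-overlook point is the curvature-modulus reconciliation between Proposition~\ref{thrm:uniform_stability} (stated in $\mu_k$) and Lemma~\ref{lemma:convergence_iht} (stated in $\mu_{3k}$); since restricted strong convexity is monotone in sparsity, $\mu_{3k}\le\mu_k$, so the $\mu_{3k}$ hypothesis is strictly stronger and furnishes everything both ingredients need.
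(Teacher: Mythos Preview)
Your proposal is correct and follows essentially the same approach as the paper: recognize that $\tilde w_{S,k}^{(t)}$ lies in $\{w_{S\mid J}:|J|=k\}$, invoke the union-bound argument of Proposition~\ref{thrm:uniform_stability} verbatim for the generalization gap, and for the excess risk combine the refinement $F_S(\tilde w_{S,k}^{(t)})\le F_S(w_{S,k}^{(t)})$ with Lemma~\ref{lemma:convergence_iht} and a Hoeffding bound on $F_S(\bar w)-F(\bar w)$. The only cosmetic discrepancy is your choice $\epsilon=\Theta(1/\sqrt{n})$ versus the paper's $\epsilon=\mathcal{O}(\sqrt{k\log(n)\log(p/k)/n})$; either is absorbed by the displayed bound, though the latter more precisely matches the stated iteration count $t\ge\mathcal{O}\bigl(\frac{L}{\mu_{3k}}\log(n/(k\log(n)\log(p/k)))\bigr)$.
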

\begin{remark}
As expected that in the considered black-box regime the generalization gap and excess risk of IHT with sufficient iteration are upper bounded by those of the $\ell_0$-ERM of rate $\mathcal{\tilde O}\left(\sqrt{k/n} \right)$.
\end{remark}
Finally, we provide a direct stability analysis of IHT aiming to improve upon the previous generalization gap bounds by removing their dependency on sparsity level $k$. Note that Theorem~\ref{thrm:generalizaion_support_recovery_high} is not directly applicable to handle this case mainly because the stability of $\ell_0$-ERM does not imply the stability of IHT due to its involved truncation operation at each iteration. We need to tailor some new stability theory for IHT as discussed in the following analysis.

For a vector $w\in \mathbb{R}^p$, let us denote $[w]_{(j)}$ the entry of $w$ with $j$-th largest absolute value such that $|[w]_{(1)}| \ge |[w]_{(2)}| \ge...\ge |[w]_{(p)}|$. We first introduce the following concept of \emph{hard-thresholding stability} which quantifies the stability of the hard-thresholding operation.
\begin{definition}[Hard-Thresholding Stability]
For a vector $w\in \mathbb{R}^p$ and given $k\in [p]$, we say $w$ is $\varepsilon_{k}$-hard-thresholding stable for some $\varepsilon_{k}>0$ if and only if $|[w]_{(k)}| \ge |[w]_{(k+1)}| + \varepsilon_{k}$.
\end{definition}
Clearly, if $w$ is $\varepsilon_{k}$-hard-thresholding stable, then $\mathrm{H}_k(w)$ should be unique and $\supp\left(\mathrm{H}_k(w)\right)=\supp\left(\mathrm{H}_k(w+\delta_w)\right)$ where $\|\delta_w\|_\infty<\varepsilon_k/2$. That is, the larger $\varepsilon_k$ is, the stabler the hard-thresholding operation will be with respect to the preserved top $k$ supporting set. Next, we introduce the concept of \emph{iterative-hard-thresholding stability} which basically characterizes the stability of the IHT algorithm when applied to a (deterministic) function.
\begin{definition}[Iterative-Hard-Thresholding Stability]\label{def:iht_stability}
For a given differentiable function $F$, $k \in [p]$, $T\in \mathbb{Z}^+$ and $k$-sparse vector $w^{(0)}\in \mathbb{R}^p$, let $\{w^{(t)}\}_{t=1}^T$ be the sequence generated by invoking IHT on $F$ with step-size $\eta$ and initialization $w^{(0)}$. Then we say $F$ is $(\varepsilon_{k},\eta, T, w^{(0)})$-IHT stable if $w^{(t-1)} - \eta \nabla F(w^{(t-1)})$, $\forall t\in [T]$ is $\varepsilon_{k}$-hard-thresholding stable.
\end{definition}
\begin{remark}
By definition, if $F$ is $(\varepsilon_{k},\eta, T, w^{(0)})$-IHT stable, then for each $t \in [T]$, $w^{(t)}=\mathrm{H}_k\left(w^{(t-1)}-\eta \nabla F(w^{(t-1)})\right)$ is unique. That is, the $k$-sparse solution sequence $\{w^{(t)}\}_{t=1}^T$ of IHT is unique.
\end{remark}

The following theorem is our main result on the generalization performance of IHT for $\ell_0$-ERM given that the population risk function $F$ has IHT stability up to the desired number of iteration.
\begin{restatable}{theorem}{IHTBlackBoxBoundsStabilityStrong}\label{thrm:uniform_stability_strong_iht}
Assume that the domain of interest $\mathcal{W}\subset \mathbb{R}^p$ is bounded by $R$, the loss function $\ell$ is $L$-strongly smooth and $G$-Lipschitz continuous with respect to its first argument and $0\le\ell(w;\xi)\le M$ for all $w,\xi$. Suppose that $F_S$ is $\mu_{4k}$-strongly convex with probability at least $1- \delta'_n$ over sample $S$. Consider running $T$ steps of IHT iteration over $F_S$ with step-size $\eta= \frac{2}{3L}$ and a fixed $k$-sparse initialization $w^{(0)}$. Assume that the population risk function $F$ is $(\varepsilon_{k},\eta, T, w^{(0)})$-IHT stable. For any $\delta \in (0, 1-\delta'_n)$, if $n\ge \frac{2G^2(L+\mu_{4k})^2\log(pT/\delta)}{L^2\mu^2_{4k}\varepsilon^2_k}$, then with probability at least $1-\delta-\delta'_n$ over the random draw of sample set $S$, the generalization gap is upper bounded by
\[
F(\tilde w^{(T)}_S) - F_S(\tilde w^{(T)}_S) \le \mathcal{O}\left(\frac{G^{3/2}M^{1/4}}{\mu_{4k}^{3/4}}\sqrt{\frac{\log(n)\log(n/\delta)}{n}} \right).
\]
Moreover, if $T\ge\mathcal{O}\left(\frac{L}{\mu_{4k}}\log \left(\frac{n}{\log(1/\delta)}\right)\right)$, then the excess risk with respect to $\bar w =\argmin_{\|w\|_0\le \bar k} F(w)$ with $k\ge \frac{32 L^2}{\mu_{4k}^2}\bar k$ is separately upper bounded by
\[
F(\tilde w^{(T)}_S) - F(\bar w) \le \mathcal{O}\left(\frac{G^{3/2}M^{1/4}}{\mu_{4k}^{3/4}}\sqrt{\frac{\log(n)\log(n/\delta)}{n}} + M\sqrt{\frac{\log(1/\delta)}{n}} \right).
\]
\end{restatable}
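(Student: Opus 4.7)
The plan is to proceed in three stages: (i) use the IHT stability of the population risk $F$ to couple the empirical IHT trajectory on $F_S$ with the deterministic trajectory on $F$ and conclude that the recovered support is, with high probability, a single fixed $k$-subset $J_\ast$; (ii) conditioned on $J_\ast$ being frozen and $F_S$ being restricted strongly convex, reduce $\tilde w^{(T)}_S$ to a constrained strongly convex ERM and invoke the high-probability uniform stability theorem of \cite{feldman2019high} used in Proposition~\ref{thrm:uniform_stability}; (iii) combine the resulting generalization gap with the IHT convergence rate in Lemma~\ref{lemma:convergence_iht} and a scalar concentration bound at the deterministic vector $\bar w$ to obtain the excess risk bound.

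\textbf{Step 1 (coupling to the population trajectory).} Let $\{w^{(t)}_\ast\}_{t=0}^T$ be the deterministic sequence produced by IHT on $F$ with initialization $w^{(0)}$ and step size $\eta=2/(3L)$, and let $\{w^{(t)}_S\}$ be its empirical counterpart on $F_S$. I would argue by induction that $w^{(t)}_S=w^{(t)}_\ast$ for all $t\in[T]$. Under the inductive hypothesis, the two updates differ only through $\eta(\nabla F_S(w^{(t-1)}_\ast)-\nabla F(w^{(t-1)}_\ast))$, evaluated at the \emph{non-random} point $w^{(t-1)}_\ast$, so a coordinatewise Hoeffding bound (using $|\nabla_j\ell(\cdot;\xi)|\le G$) combined with a union bound over $p$ coordinates and $T$ iterates yields
\[
\max_{t\in[T]}\bigl\|\nabla F_S(w^{(t-1)}_\ast)-\nabla F(w^{(t-1)}_\ast)\bigr\|_\infty = \mathcal{O}\bigl(G\sqrt{\log(pT/\delta)/n}\bigr)
\]
with probability at least $1-\delta/2$. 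The hypothesized sample-size lower bound is exactly what is required to make $\eta$ times this quantity strictly smaller than $\varepsilon_k/2$; the IHT stability of $F$ then forces the hard-thresholding operator at every step to select the same top-$k$ support as in the population trajectory, closing the induction.

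\textbf{Step 2 (generalization gap on the frozen support).} Set $J_\ast:=\supp(w^{(T)}_\ast)$. On the good event from Step 1 intersected with the $1-\delta'_n$ restricted-strong-convexity event for $F_S$, the debiased iterate $\tilde w^{(T)}_S$ is exactly the minimizer of $F_S$ over vectors supported on $J_\ast$---a $k$-dimensional strongly convex ERM. A Bousquet--Elisseeff calculation then shows it has uniform stability of order $\mathcal{O}(G^2/(\mu_{4k} n))$ under replacement of a single training sample. Plugging this stability into Theorem~1.1 of \cite{feldman2019high} (mimicking the $\ell_2$-regularization argument used in the proof of Proposition~\ref{thrm:uniform_stability}, but now over a \emph{single} fixed support $J_\ast$ rather than all $\binom{p}{k}$ of them) produces the stated rate
\[
F(\tilde w^{(T)}_S)-F_S(\tilde w^{(T)}_S)\le \mathcal{O}\!\left(\frac{G^{3/2}M^{1/4}}{\mu_{4k}^{3/4}}\sqrt{\frac{\log(n)\log(n/\delta)}{n}}\right).
\]
The key gain over Proposition~\ref{thrm:uniform_stability} is that IHT stability removes the $k\log(ep/k)$ overhead incurred by a naive union bound over $k$-supports.

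\textbf{Step 3 (excess risk) and the main obstacle.} Because $w^{(T)}_S$ is feasible for the defining optimization problem of $\tilde w^{(T)}_S$, we have $F_S(\tilde w^{(T)}_S)\le F_S(w^{(T)}_S)$. For $T\ge\mathcal{O}((L/\mu_{4k})\log(n/\log(1/\delta)))$, Lemma~\ref{lemma:convergence_iht} drives $F_S(w^{(T)}_S)-F_S(\bar w_{S,\bar k})$ down to $\mathcal{O}(1/\sqrt{n})$, and $\bar k$-sparsity of $\bar w$ gives $F_S(\bar w_{S,\bar k})\le F_S(\bar w)$. A Hoeffding bound at the deterministic vector $\bar w$ finally controls $F_S(\bar w)-F(\bar w)$ by $M\sqrt{\log(1/\delta)/(2n)}$, and summing these three contributions with the gap from Step 2 yields the excess risk bound. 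The genuinely delicate step is Step 1: IHT stability is a statement about the \emph{population} trajectory, and transferring it to the empirical IHT requires the inductive coupling so that gradient concentration can be applied at a deterministic point at every iteration; any drift would force uniform concentration over a random iterate and would reintroduce a sparsity-dependent penalty. Managing the $T$-fold union bound while keeping the sample-size threshold of the theorem sharp is the principal technical challenge.
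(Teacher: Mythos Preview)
Your overall architecture matches the paper's: couple the empirical IHT trajectory to the deterministic population trajectory, freeze the support $J_\ast=\supp(w^{(T)}_\ast)$, then invoke Lemma~\ref{lemma:support_stability} on that single fixed support. Steps~2 and~3 are essentially correct and mirror the paper's argument.

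The genuine gap is in Step~1. Your inductive claim is that $w^{(t)}_S=w^{(t)}_\ast$ \emph{exactly}; this is false already at $t=1$. Even if $w^{(0)}_S=w^{(0)}_\ast=w^{(0)}$, the pre-threshold vectors $w^{(0)}-\eta\nabla F_S(w^{(0)})$ and $w^{(0)}-\eta\nabla F(w^{(0)})$ differ on the selected support, so after hard thresholding $w^{(1)}_S\neq w^{(1)}_\ast$ even when their supports coincide. Consequently, at step $t$ the empirical iterate $w^{(t-1)}_S$ is \emph{not} at the deterministic point $w^{(t-1)}_\ast$, and your sentence ``the two updates differ only through $\eta(\nabla F_S(w^{(t-1)}_\ast)-\nabla F(w^{(t-1)}_\ast))$'' is unjustified. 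You yourself flag this in the last paragraph (``any drift would force uniform concentration over a random iterate''), but the proposed cure---exact equality---does not hold, so the induction as stated does not close.

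What the paper does instead (Lemma~\ref{lemma:emp_iht_stability}) is to carry the weaker inductive hypothesis $\|w^{(t)}_S-w^{(t)}_\ast\|<\varepsilon_k/2$ together with $\supp(w^{(t)}_S)=\supp(w^{(t)}_\ast)$. The key missing ingredient in your sketch is a \emph{contraction} step: using the restricted $\mu_{4k}$-strong convexity and $L$-smoothness of $F_S$ one has (Lemma~\ref{lemma:strong_smooth}) that $w\mapsto w-\eta\nabla_J F_S(w)$ is a contraction with factor $1-\tfrac{\eta L\mu_{4k}}{L+\mu_{4k}}$ on the relevant $4k$-sparse subspace. This lets one split
\[
\bigl\|\hat w^{(t+1)}_S-\hat w^{(t+1)}_\ast\bigr\|\le\Bigl(1-\tfrac{2\mu_{4k}}{3(L+\mu_{4k})}\Bigr)\|w^{(t)}_S-w^{(t)}_\ast\|+\eta\,\bigl\|\nabla F_S(w^{(t)}_\ast)-\nabla F(w^{(t)}_\ast)\bigr\|,
\]
so that the concentration term is evaluated at the deterministic $w^{(t)}_\ast$ while the accumulated drift is damped rather than amplified. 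The sample-size condition in the theorem is calibrated precisely so that this recursion stays below $\varepsilon_k/2$; without the contraction, the drift would accrue a factor of $T$ and the bound would not close. This is the idea you need to add to make Step~1 go through.
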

\begin{proof}[Proof Sketch]
The key proof idea is to construct an \emph{oracle sequence} $\{w^{(t)}\}_{t=1}^T$ generated by applying $T$ rounds of IHT iteration to (unknown) $F$ with the considered initialization $w^{(0)}$ and step-size $\eta$. Given that $F$ is $(\varepsilon_{k},\eta, T, w^{(0)})$-IHT stable, we can show in Lemma~\ref{lemma:emp_iht_stability} that the actual sequence $\{w_{S,k}^{(t)}\}_{t=1}^T$ generated by IHT invoked to the empirical risk $F_S$ (with any fixed $S$) satisfies $\supp\left(w^{(t)}_{S,k}\right) = \supp\left(w^{(t)}\right), \forall t\in [T]$ provided that $n$ is sufficiently large as assumed. Particularly, we have $\supp\left(w^{(T)}_S\right) = \supp\left(w^{(T)}\right)$ which is a fixed deterministic index set of size $k$. Then using Lemma~\ref{lemma:support_stability} yields the desired high probability generalization gap bound for $\tilde w_{S,k}^{(T)}$. The  excess risk bound can be proved in light of the just established generalization gap bound and the convergence result in Lemma~\ref{lemma:convergence_iht}. A full proof of this theorem is provided in Appendix~\ref{apdsect:proof_uniform_stability_strong_iht}.
\end{proof}
\begin{remark}
The $\mathcal{O}(\log(n)/\sqrt{n})$ bounds established in Theorem~\ref{thrm:uniform_stability_strong_iht} are not relying on sparsity level $k$ and in this sense they are substantially tighter than those in Corollary~\ref{corol:uniform_stability_iht}, yet under certain more stringent conditions on the IHT stability of the population risk $F$.
\end{remark}

\section{Numerical Experiments}
\label{sect:experiment}
In this section, we carry out numerical experiments on synthetic sparse linear regression and logistic regression tasks to verify the IHT generalization theory as presented in Section~\ref{sect:generalization_iht}, which is mostly implied by the theory established in Section~\ref{sect:generalization_l0_erm} for $\ell_0$-ERM. Throughout our numerical study, we initialize $w^{(0)} = 0$ for IHT and replicate each individual experiment $10$ times over the random generation of training data for generalization performance evaluation.

\subsection{Sparse linear regression}
\label{ssect:experiment_sparse_linear_regression}

We first consider the sparse linear regression model with quadratic loss function $\ell(w; x_i, y_i)=\frac{1}{2}(y_i-w^{\top}x_i)^2$. The feature points $\{x_i\}_{i=1}^n$ are sampled from standard multivariate Gaussian distribution. Given a model parameter $\bar w \in \mathbb{R}^p$, the responses $\{y_i\}_{i=1}^n$ are generated according to a linear model $y_i=\bar{w}^{\top}x_i+\varepsilon_i$ with a random Gaussian noise $\varepsilon_i\sim \mathcal{N}(0,\sigma^2)$. In this case, the population risk function can be expressed in close form as
\begin{equation}\label{equat:linear_regression_close_form}
F(w) = \frac{1}{2}\|w - \bar w\|^2 + \frac{\sigma^2}{2}.
\end{equation}
We fix the feature dimension $p=1000$ throughout this group of experiments.

\begin{figure}[h!]
\centering
\subfigure[Impact of samples size and sparsity level under fixed noise level $\sigma=1$.\label{fig:linear_white_box_sparsity}]{
\includegraphics[width=3in]{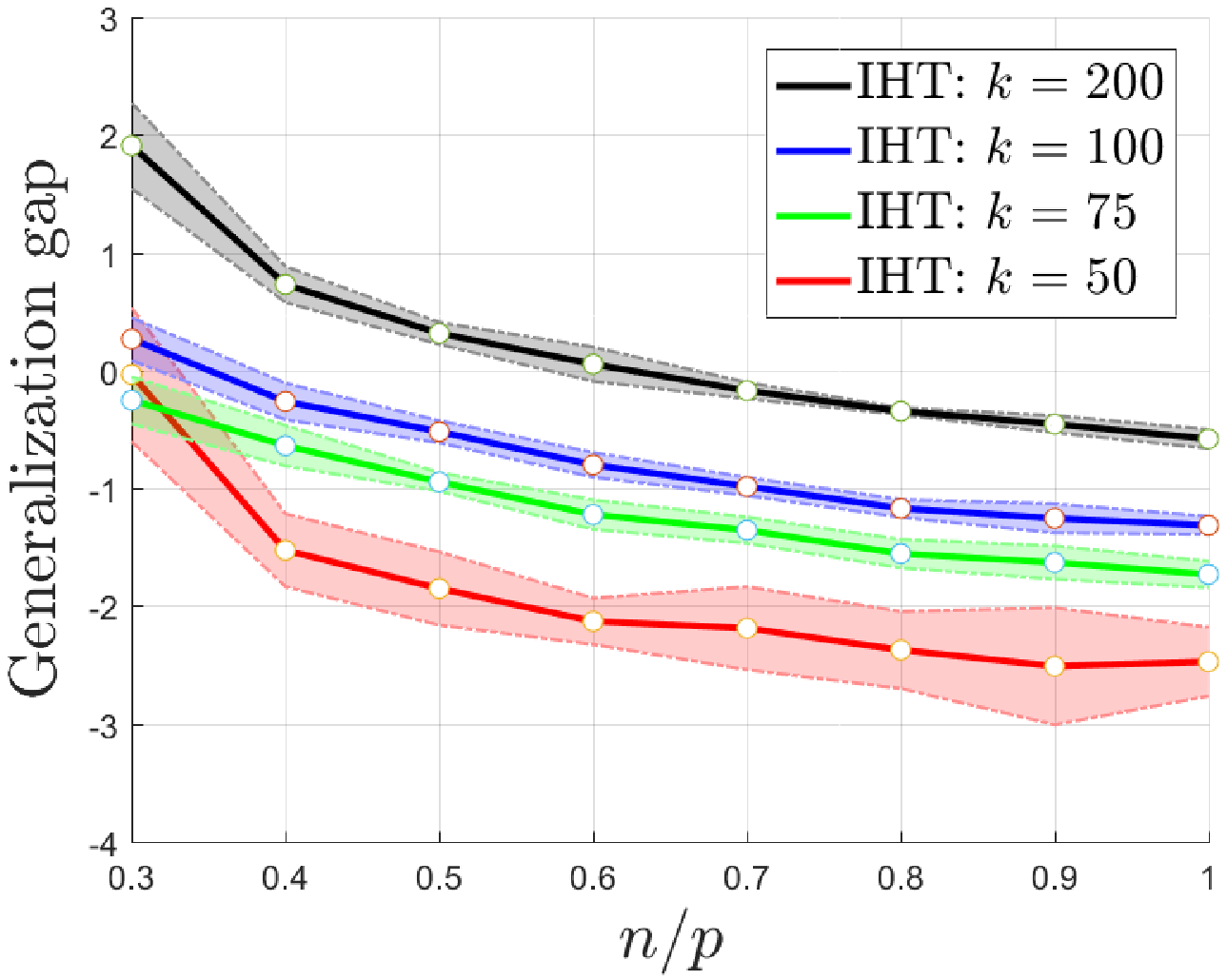}
\label{fig:generalizationgap_sparsity_white_linear}
\includegraphics[width=3in]{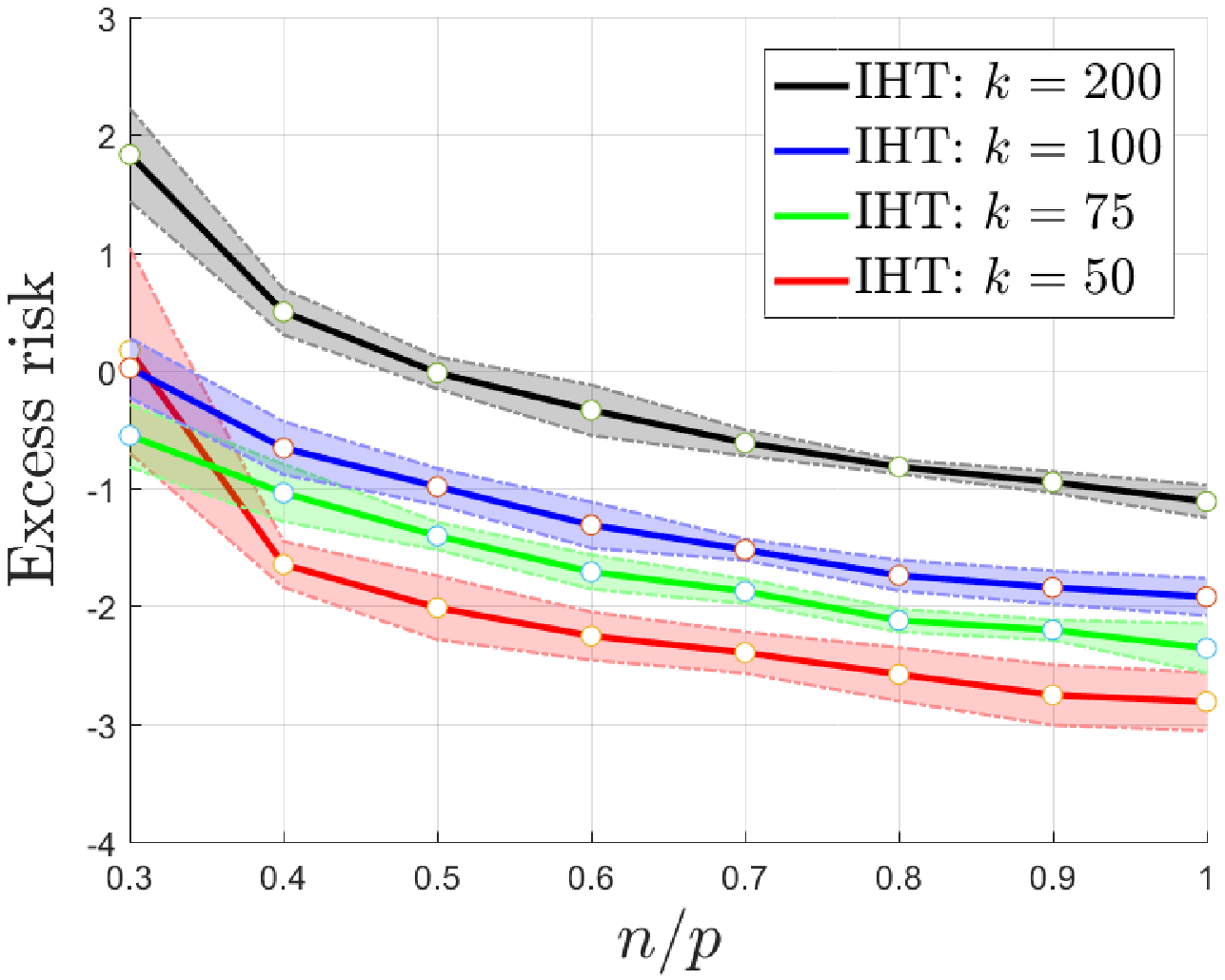}
\label{fig:excessrisk_sparsity_white_linear}
}
\subfigure[Impact of samples size and noise level under fixed sparsity level $k=100$.\label{fig:linear_white_box_noise}]{
\includegraphics[width=3in]{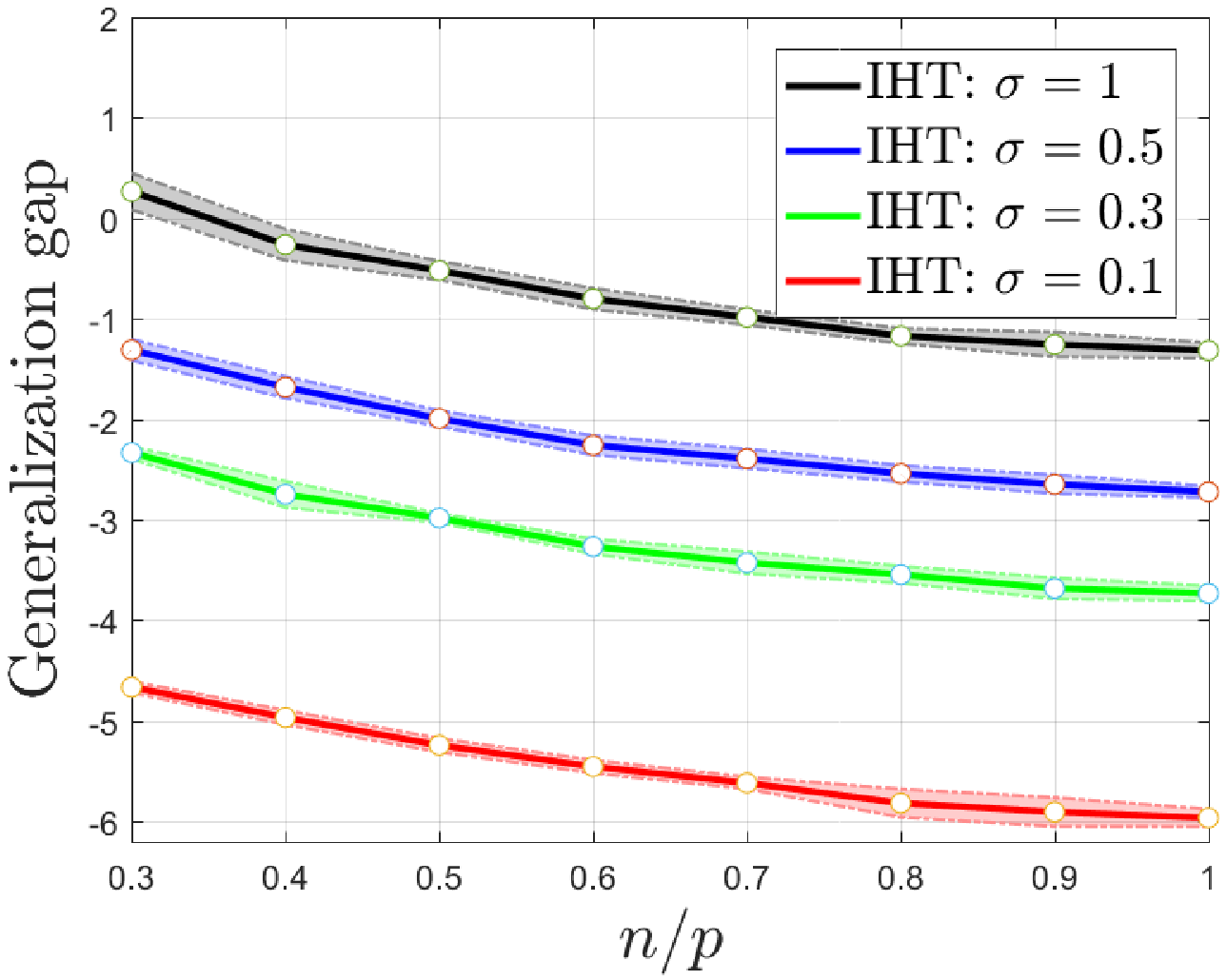}
\label{fig:generalizationgap_noise_white_linear}
\includegraphics[width=3in]{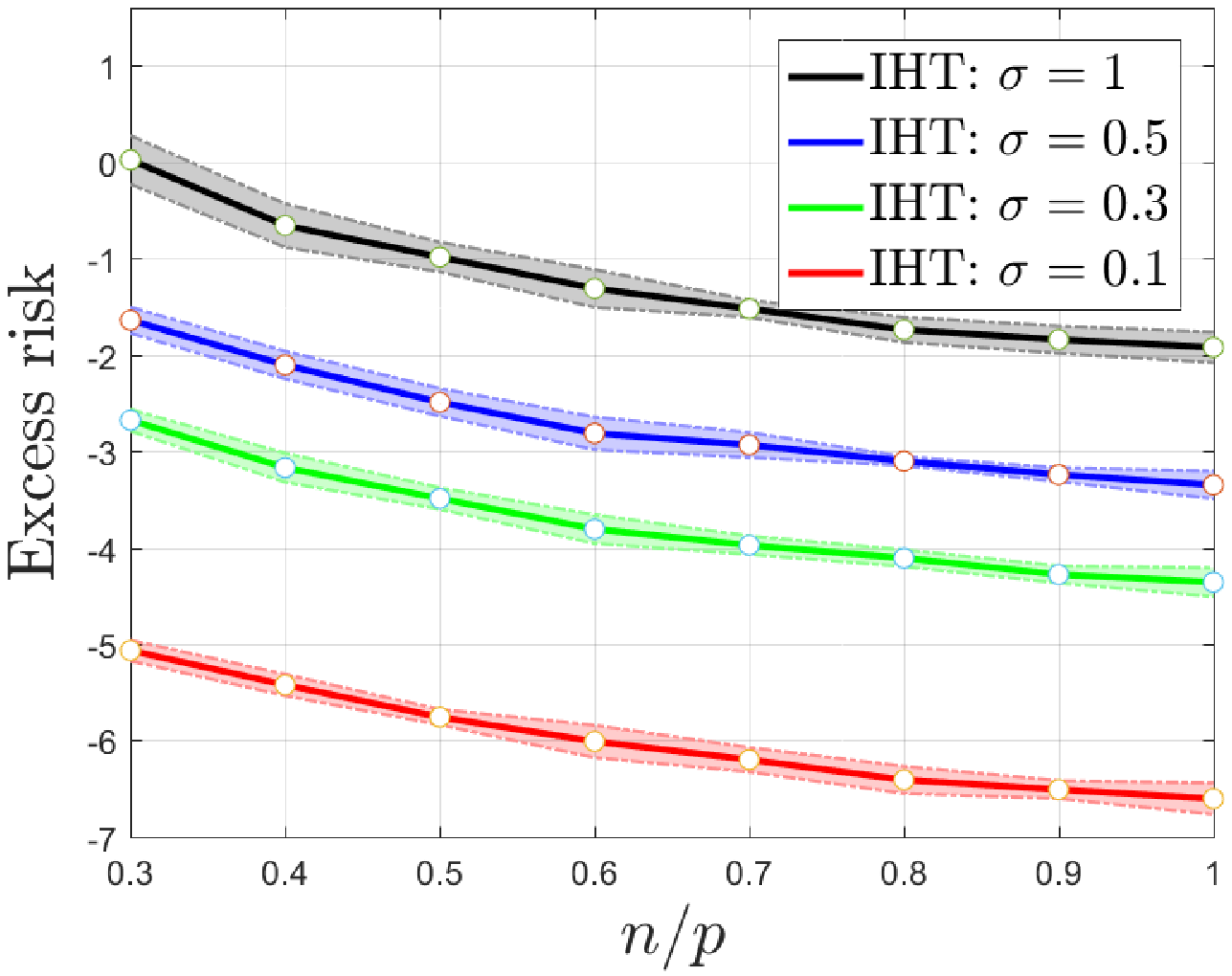}
\label{fig:excessrisk_noise_white_linear}
}
\caption{Generalization results of sparse linear regression with a white-box sparse model.}
\label{fig:linear_white_box}
\end{figure}

\newpage

\textbf{Setup and results in the white-box regime.} In this case, we set the true parameter vector $\bar{w}$ to be a $\bar k$-sparse vector with $\bar k =50$ and its non-zero entries are sampled from standard Gaussian distribution.  Corollary~\ref{corol:generalization_barw_iht} suggests that the generalization gap and excess risk bounds of IHT are controlled by the quantity $k\sigma^2\log p/n$. In order to verify this theory, we consider the following two experimental setups:
\begin{itemize}
  \item We fix $\sigma=1$ and study the impact of varying $n/p\in (0.3,1)$ and $k\in \{50, 75, 100, 200\}$ on the actual generalization performance. According to the close-form expression in~\eqref{equat:linear_regression_close_form}, we must have $\min_{\|w\|_0\le k} F(w)=\sigma^2/2$ for any $k \ge \bar k$ and thus the excess risk at any $k$-sparse $w$ can be exactly computed as $\frac{1}{2}\|w-\bar w\|^2$. Figure~\ref{fig:linear_white_box_sparsity} shows the evolving curves (error bar shaded in color) of generalization gap and excess risk as functions of sample size in the considered setting. From this set of curves we can make the following two observations: 1) for each fixed $k$, the generalization gap and excess risk of IHT decrease as $n$ increases, and 2) for each fixed $n$, these measurements grow larger as $k$ increases. These numerical evidences are consistent with the implication of Corollary~\ref{corol:generalization_barw_iht} in linear regression models.
  \item We fix $k=100$ and study the impact of varying $n/p\in (0.3,1)$ and $\sigma \in \{0.1,0.3,0.5,1\}$ on the actual generalization performance. The corresponding evolving curves of generalization gap and excess risk are shown in Figure~\ref{fig:linear_white_box_noise}. From this group of results we can see that for each fixed $n$, smaller noise level $\sigma$ leads to lower  generalization gap and excess risk, which again confirms the theoretical prediction of Corollary~\ref{corol:generalization_barw_iht}.
\end{itemize}

\textbf{Setup and results in the black-box regime.} We next verify the black-box generalization bounds in Corollary~\ref{corol:uniform_stability_iht} which are of the order $\mathcal{O}(\sqrt{k\log p/n})$, without assuming the nominal model to be sparse. To this end, we consider $\bar w = \bar w' + \varepsilon'$ where $\bar w'$ is a $\bar k$-sparse standard Gaussian vector with $\bar k =50$ and $\varepsilon'$ is a zero-mean Gaussian noise vector with sufficiently small variance such that $\bar w$ is dense but nearly sparse. We study the impact of varying $n/p\in (0.3,1)$ and $k\in \{50, 75, 100, 200\}$ on the actual generalization performance. Based on the close-form expression in~\eqref{equat:linear_regression_close_form}, the excess risk at any $k$-sparse $w$ can be evaluated as $\frac{1}{2}(\|w-\bar w\|^2 -\|\mathrm{H}_k(\bar w) - \bar w\|^2)$, keeping in mind that the optimal objective value is $\min_{\|w\|_0\le k} F(w)=\frac{1}{2}\|\mathrm{H}_k(\bar w) - \bar w\|^2 + \sigma^2/2$ for any $k \ge \bar k$. Figure~\ref{fig:linear_black_box} shows the evolving curves of generalization gap and excess risk as functions of sample size under varying sparsity level. These curves affirmatively confirm the theoretical bounds in Corollary~\ref{corol:uniform_stability_iht} which suggest that smaller generalization gap and excess risk of IHT can be attained at relatively larger $n$ and smaller $k$.
\begin{figure}[h]
\centering
\subfigure{
\includegraphics[width=3in]{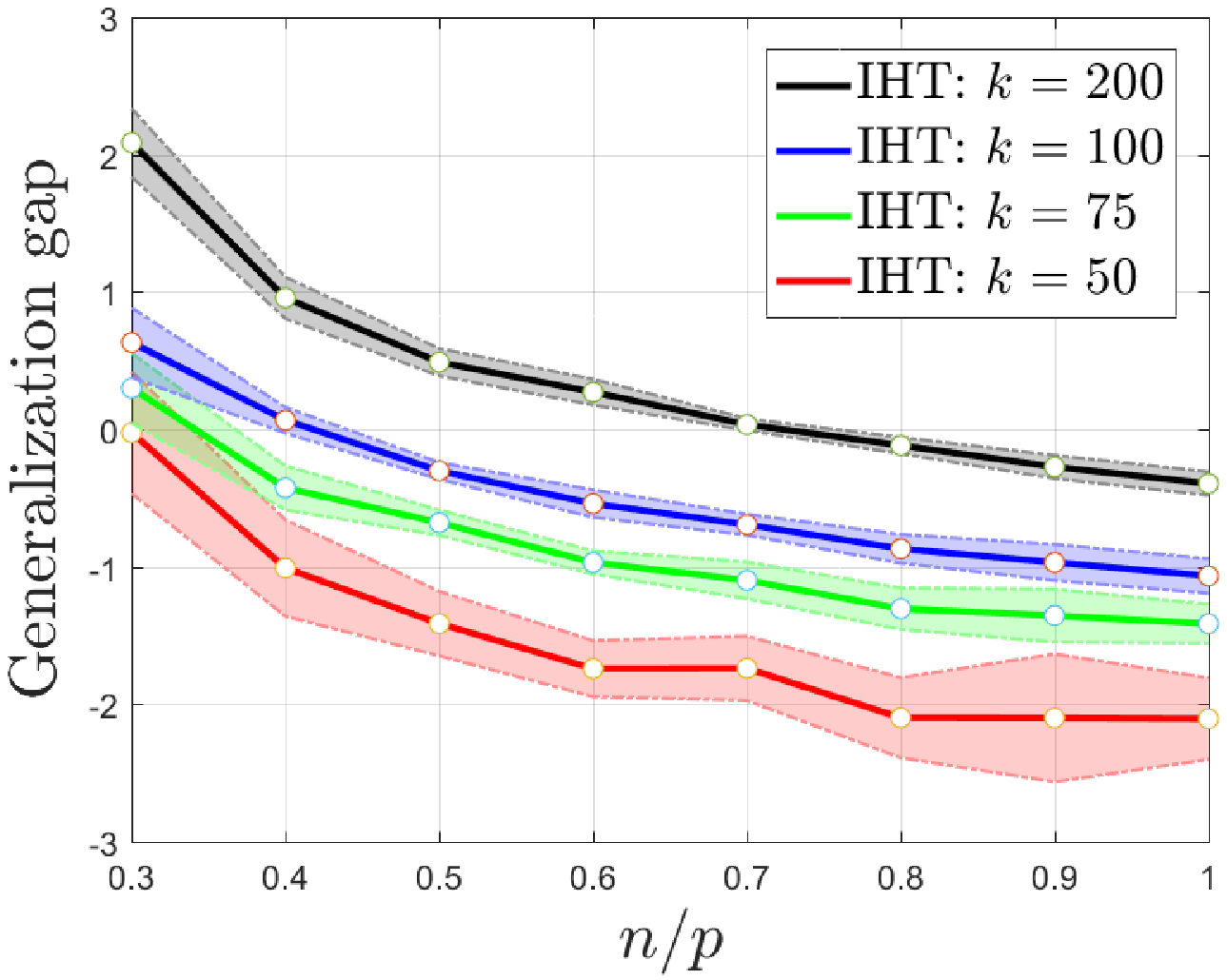}
\label{fig:generalizationgap_sparsity_black_linear}
\includegraphics[width=3in]{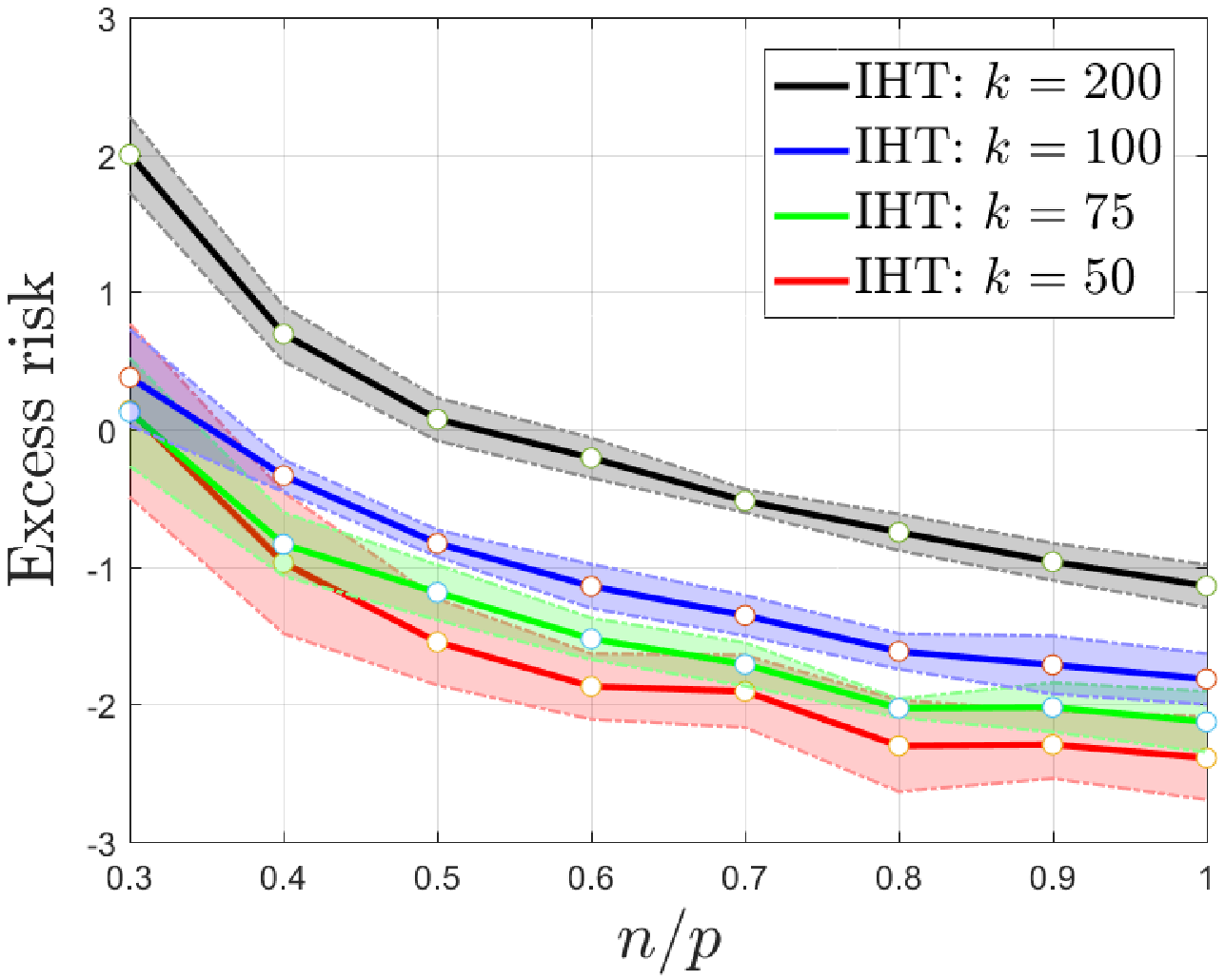}
\label{fig:excessrisk_sparsity_black_linear}
}
\caption{Generalization results of sparse linear regression with a black-box dense model.}
\label{fig:linear_black_box}
\end{figure}

\begin{figure}[h!]
\centering
\subfigure[White-box results for a sparse nominal model]{
\includegraphics[width=3in]{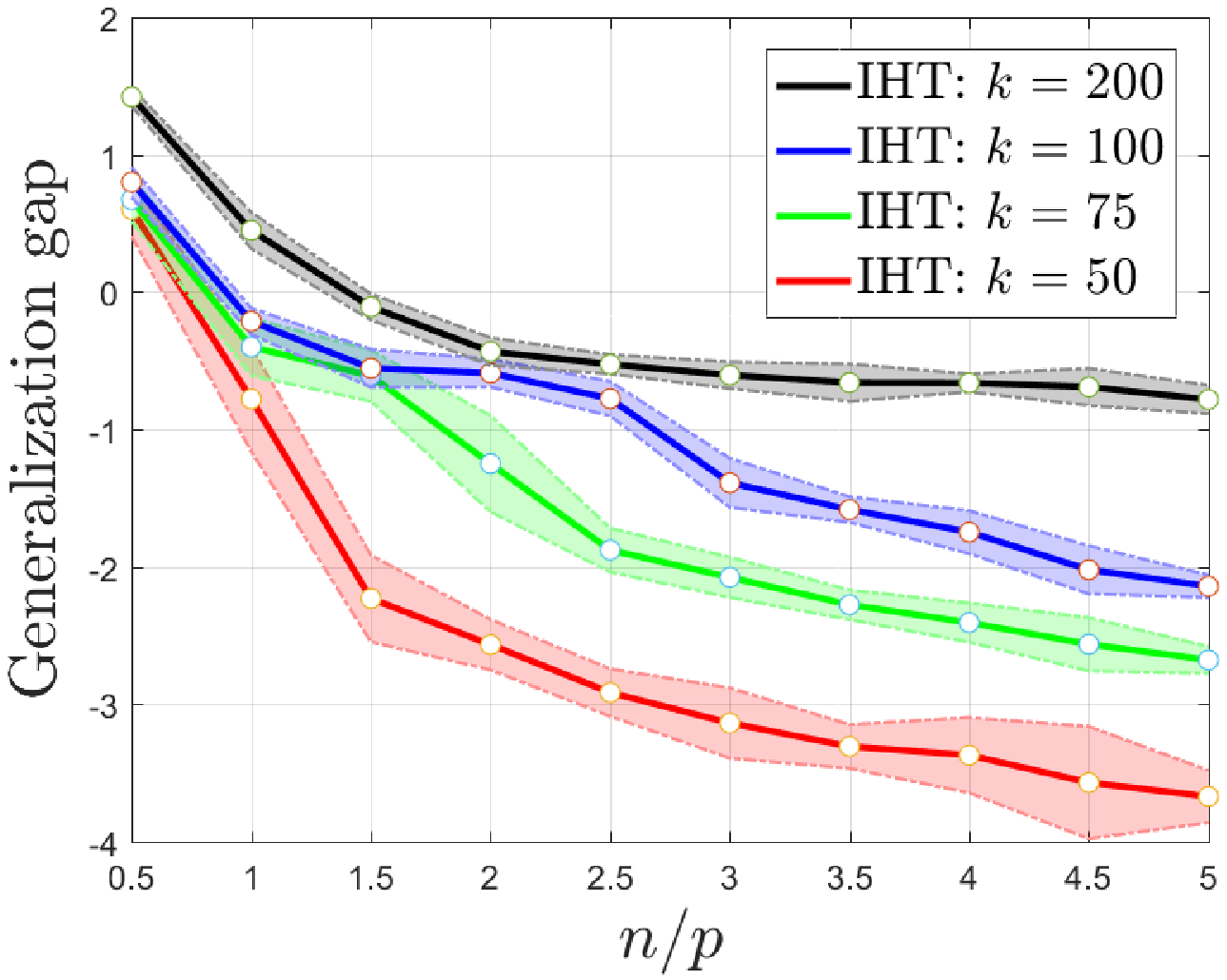}
\label{fig:generalizationgap_sparsity_white_logistic}
\includegraphics[width=3in]{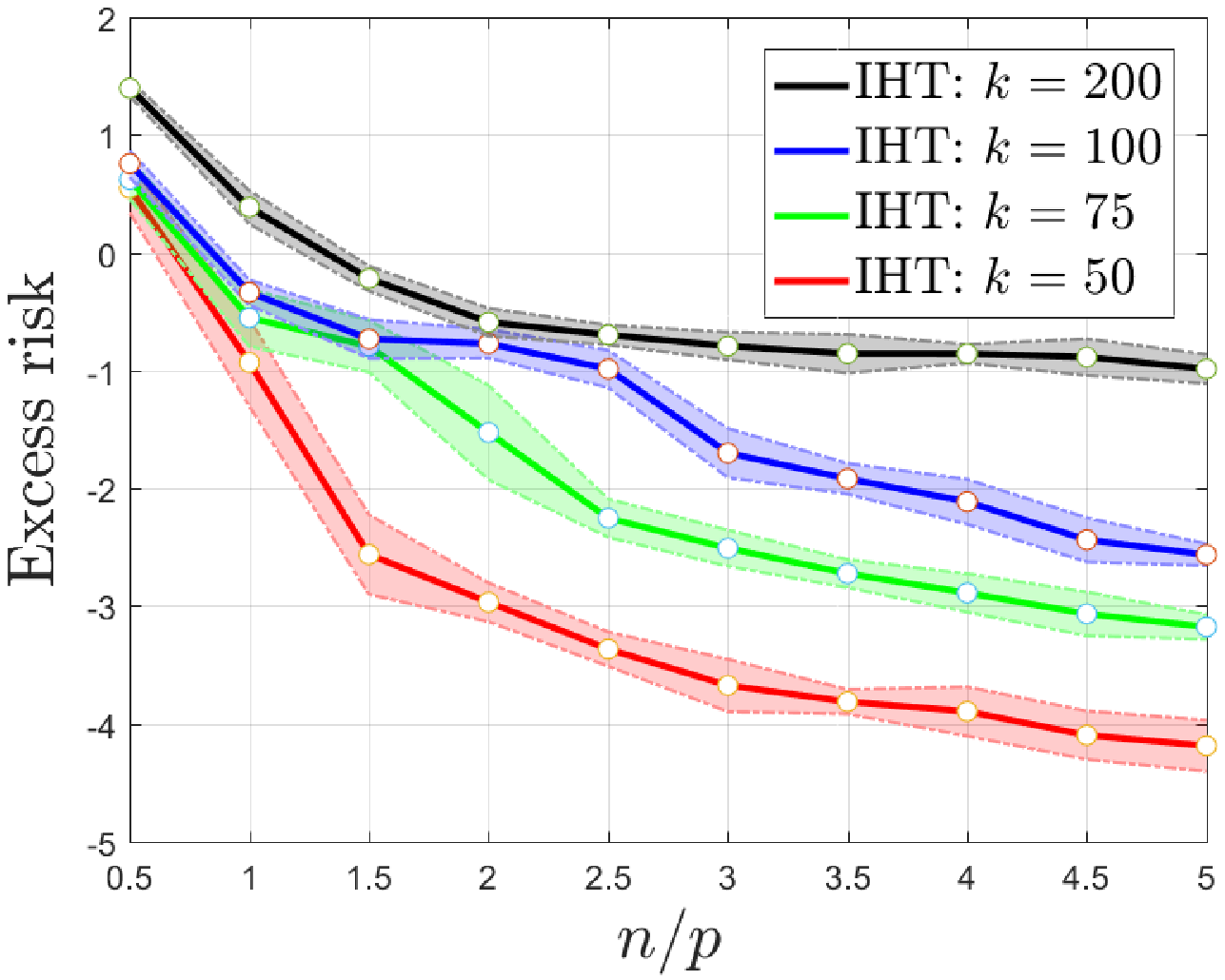}
\label{fig:excessrisk_sparsity_white_logistic}
}
\subfigure[Black-box results for a dense nominal model]{
\includegraphics[width=3in]{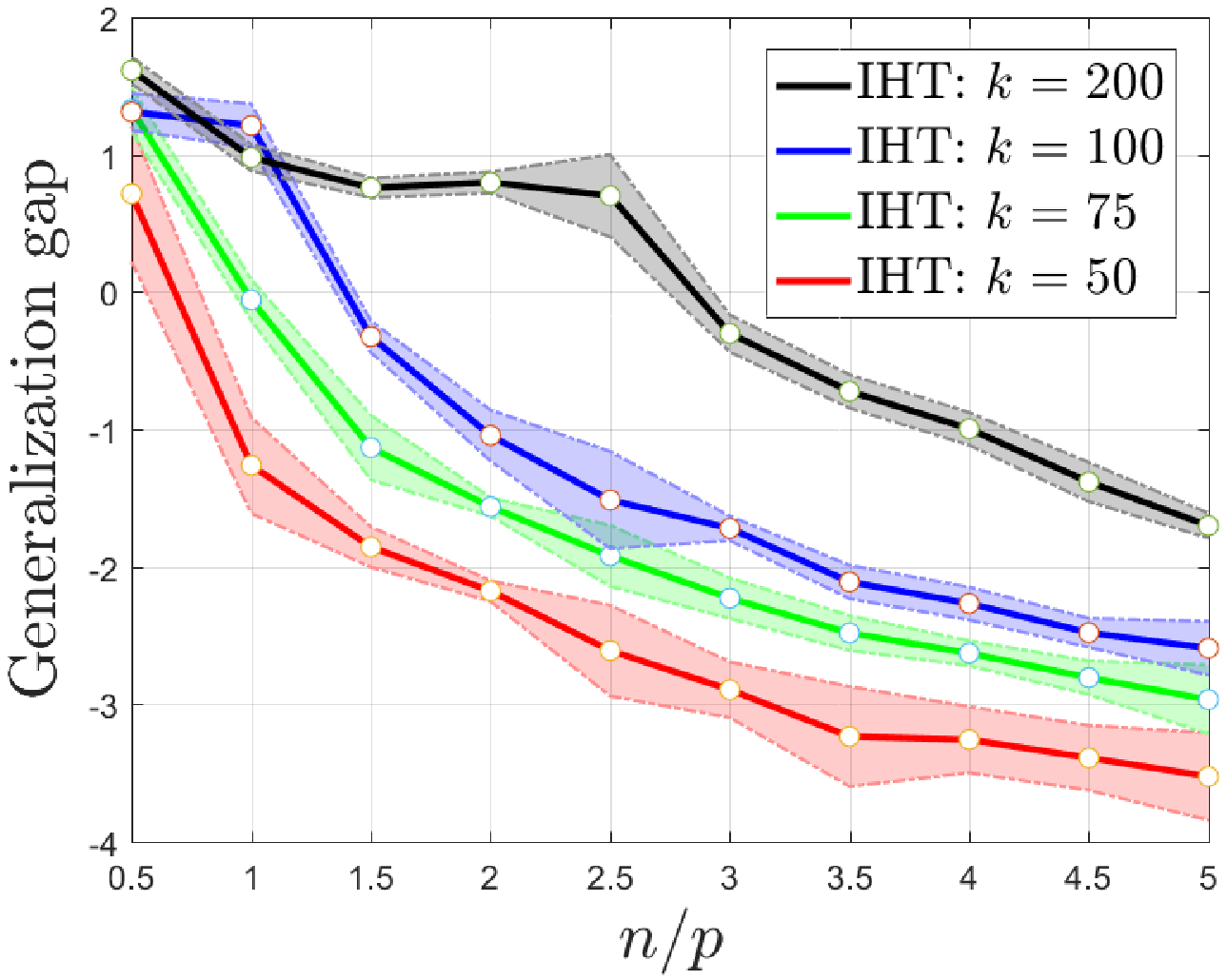}
\label{fig:generalizationgap_sparsity_black_logistic}
}
\caption{Generalization results of sparse logistic regression on the impact of sample size and sparsity level.}
\label{fig:TimeHuberTest}
\end{figure}

\subsection{Sparse logistic regression}

We further consider the binary logistic regression model with loss function $\ell(w; x_i, y_i) = \log\left(1+\exp(-y_{i}w^\top x_{i})\right)$. In this set of simulation study, each data feature $x_i$ is sampled from standard multivariate Gaussian distribution and its binary label $y_i \in \{-1,+1\}$ is determined by the conditional probability $\mathbb{P}(y_i|x_i; \bar w) = \exp (2y_i \bar w^\top x)/(1+\exp (2y_i \bar w^\top x_i))$ with a sparse parameter vector $\bar{w}$. We test with feature dimension $p=1000$ and aim to show the impact of varying $n/p\in (0.5,5)$ and $k\in \{50, 75, 100, 200\}$ on the actual generalization performance of IHT in both white-box and black-box regimes.
\begin{itemize}
  \item In the white-box setting, the true parameter vector $\bar{w}$ is set to be a $\bar k$-sparse vector with $\bar k =50$. Since for logistic loss the population risk function $F$ has no close-form expression, we approximate the population value $F(w)$ by its empirical version with sufficient sampling. In order to compute the excess risk, we need to estimate the optimal population risk which in view of the proof of Corollary~\ref{corol:generalization_barw_logisticreg} is given by $\min_{\|w\|_0\le k} F(w)=F(\bar w)$ for any $k \ge \bar k$. The evolving curves of generalization gap and excess risk as functions of sample size under different sparsity levels are shown in Figure~\ref{fig:excessrisk_sparsity_white_logistic}. For each fixed $k$, we can see that the generalization gap and excess risk of IHT decrease as $n$ increases, while for each fixed $n$, these generalization performance measurements increase as $k$ increases. These observations are consistent with the implication of Corollary~\ref{corol:generalization_barw_iht} in binary logistic regression.
  \item In the black-box setting, we also consider a nearly sparse vector $\bar w = \bar w' + \varepsilon'$ where $\bar w'$ is a $\bar k$-sparse standard Gaussian vector with $\bar k =50$ and $\varepsilon'$ is a zero-mean Gaussian noise vector with sufficiently small variance. In this case, the excess risk is computationally intractable as the optimal population risk $\min_{\|w\|_0\le k} F(w)$ is hard to compute exactly for logistic loss. Therefore, we only plot the evolving curves of generalization gap in Figure~\ref{fig:generalizationgap_sparsity_black_logistic}. In support of the theoretical bounds established in Corollary~\ref{corol:uniform_stability_iht}, these curves clearly show that the generalization gap of IHT decreases as sample size $n$ grows larger and sparsity level $k$ becomes smaller.
\end{itemize}

\begin{figure}[h!]
\centering
\subfigure[Impact of sample size and signal strength\label{fig:stability_white_linear}]{
\includegraphics[width=3in]{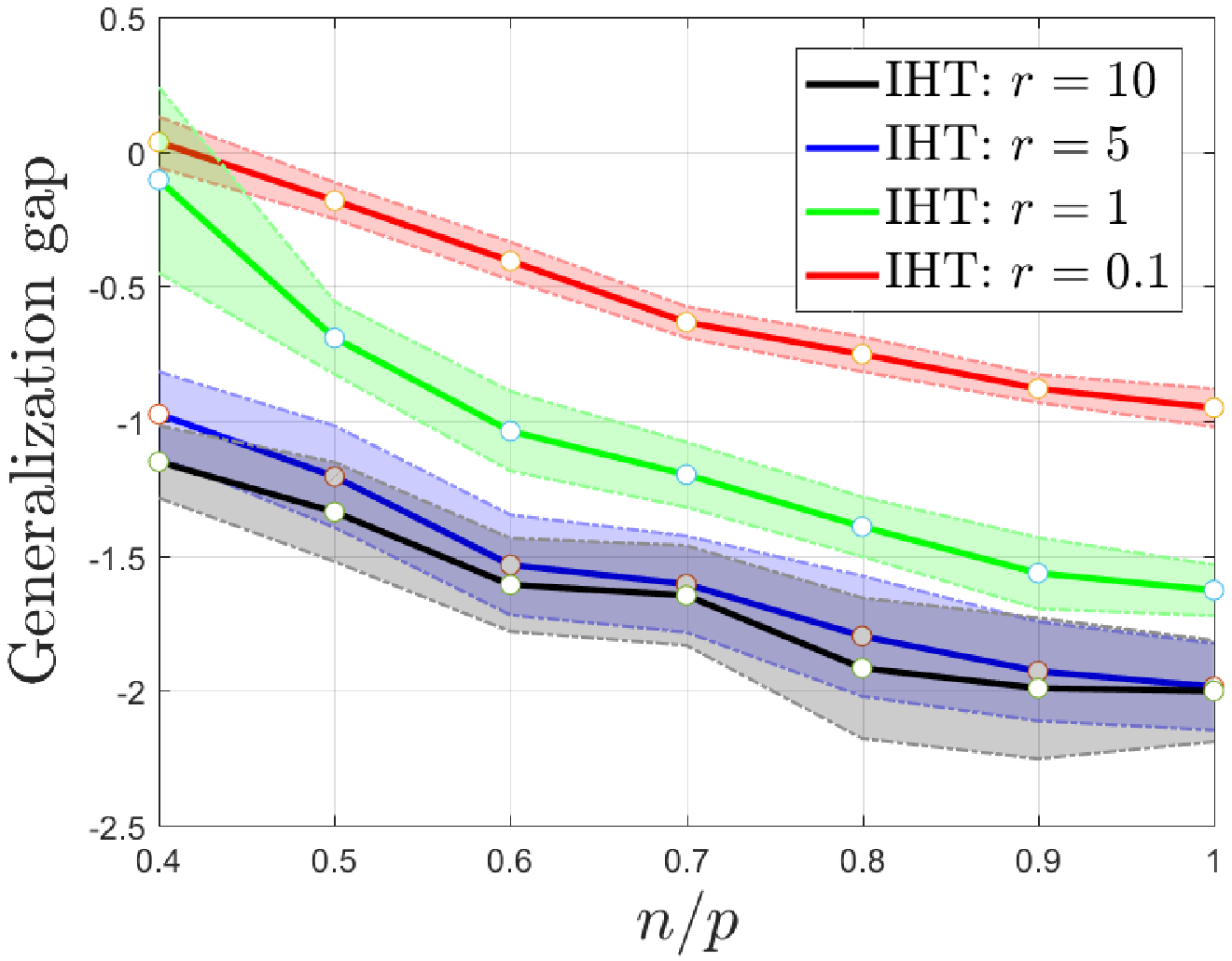}
\label{fig:stability_generalizationgap_strength_white_linear}
\includegraphics[width=3in]{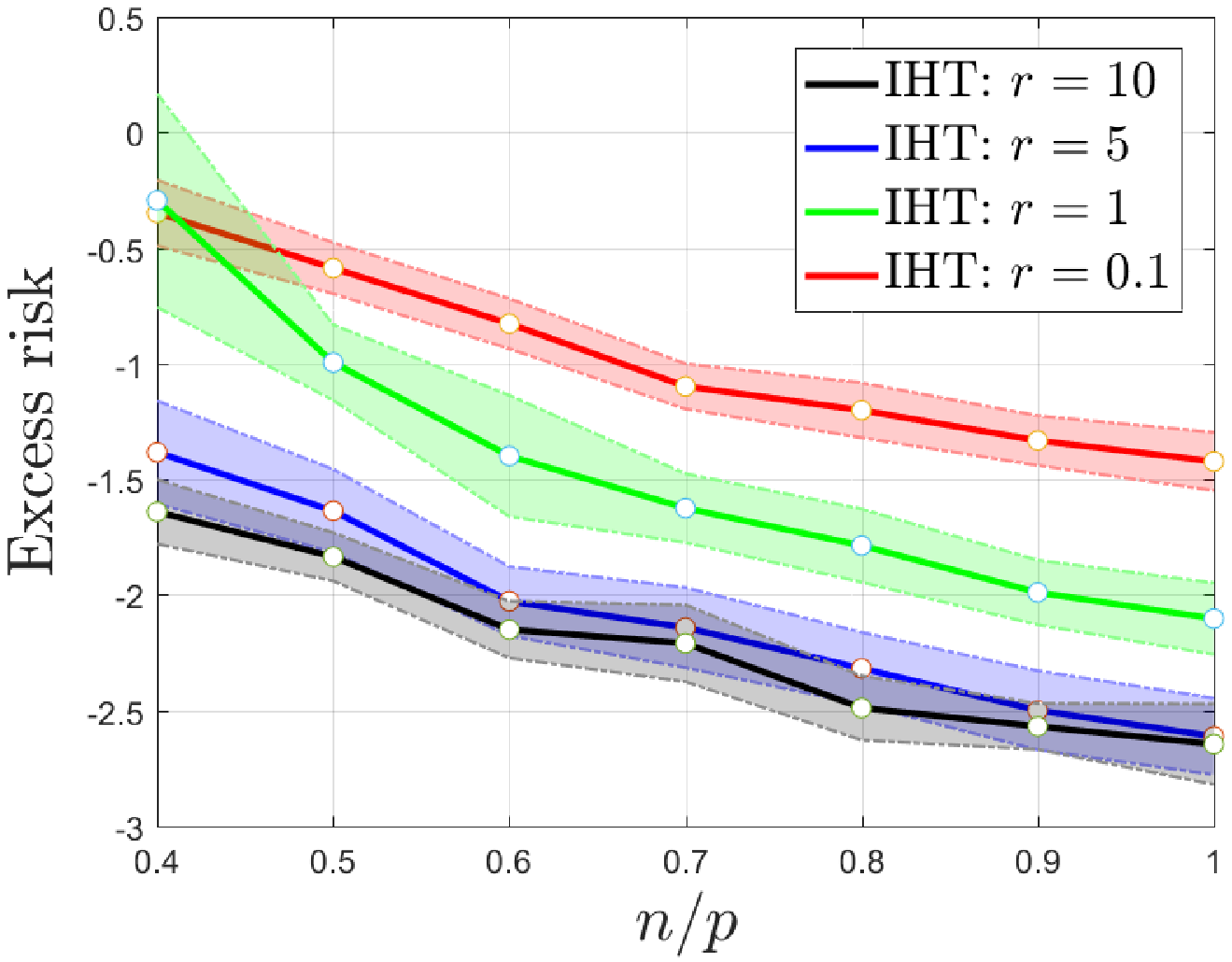}
\label{fig:stability_excessrisk_strength_white_linear}
}
\subfigure[Impact of sample size and sparsity level\label{fig:stability_black_linear}]{
\includegraphics[width=3in]{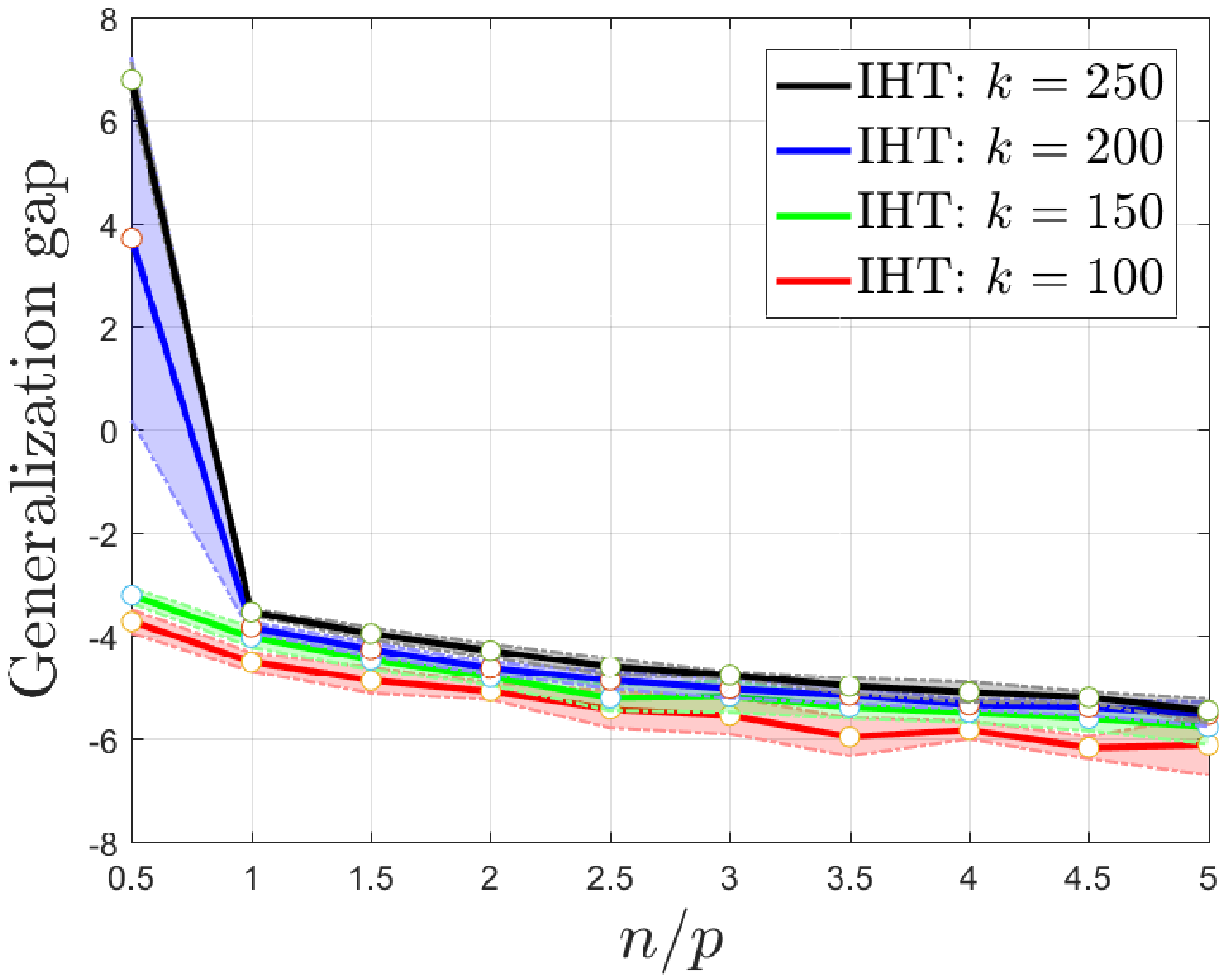}
\label{fig:stability_generalizationgap_sparsity_black_linear}
\includegraphics[width=3in]{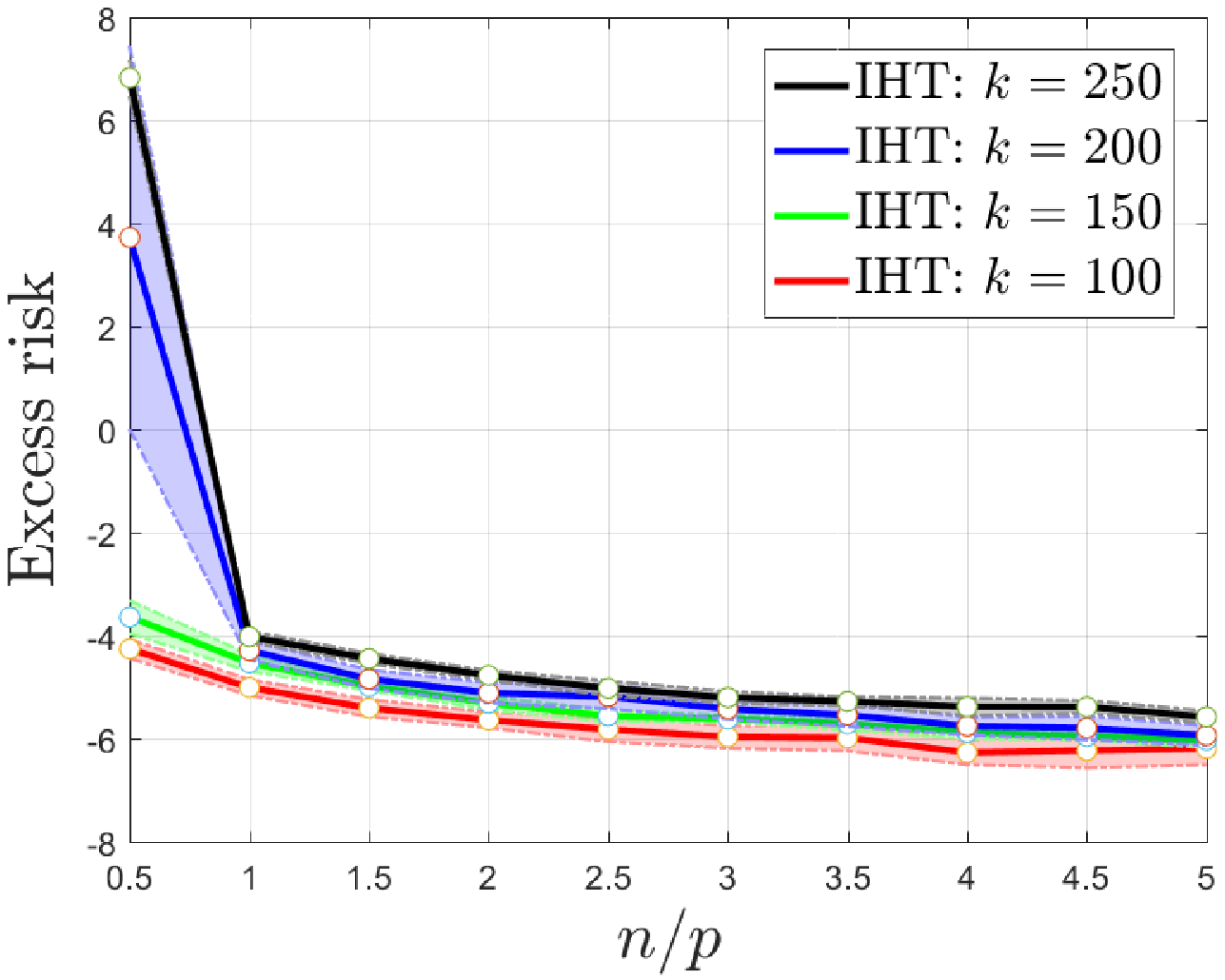}
\label{fig:stability_excessrisk_sparsity_black_linear}
}
\caption{IHT stability and generalization results of sparse linear regression.}
\label{fig:TimeHuberTest}
\end{figure}

\subsection{IHT stability and generalization theory verification}

Finally, we carry out a set of numerical experiments to verify the IHT stability and generalization theory as presented in Theorem~\ref{thrm:uniform_stability_strong_iht}, which mainly conveys that when sample size is sufficiently large, the IHT stability of the population risk $F$ plays an important role for obtaining tighter generalization bounds. For this experiment, we consider the sparse linear regression model as studied in Section~\ref{ssect:experiment_sparse_linear_regression} with $p=1000$.

We first study the case where the true parameter vector $\bar{w}$ is $\bar k$-sparse. In this special case, given $w^{(0)}=0$ and any step-size $\eta\in (0,1)$, it can be easily shown that the population risk $F$ is $(\varepsilon_{\bar k},\eta, T, w^{(0)})$-IHT stable with $\varepsilon_{\bar k}=\eta\bar w_{\min}$. Indeed, based on the close-form expression in~\eqref{equat:linear_regression_close_form} we can prove by induction that $w^{(t)}=\mathrm{H}_{\bar k} \left(w^{(t-1)}-\eta \nabla F(w^{(t-1)})\right)=(1- (1-\eta)^t) \bar w$ for all $t\ge 1$, which then implies the desired stability of IHT as $1- (1-\eta)^t \ge \eta$ when $t\ge 1$. Therefore, for a fixed $\eta\in (0,1)$, the stability strength $\varepsilon_{\bar k}=\eta\bar w_{\min}$ is controlled by the underlying signal strength $\bar w_{\min}$. In our experiment, we test with $\eta=0.5$ and set $\bar w=r \tilde w$ where $\tilde w$ is a fixed $\bar k$-sparse standard Gaussian vector with $\bar k = 100$ and $r>0$ controls the strength of signal. Figure~\ref{fig:stability_white_linear} shows convergence curves of generalization bounds under varying $n/p\in (0.4,1)$ and $r\in \{0.1, 1, 5, 10\}$. These results indicate that better generalization performance can be achieved under relatively larger $n$ and $r$, which supports the theoretical prediction by Theorem~\ref{thrm:uniform_stability_strong_iht}.

The generalization bounds in Theorem~\ref{thrm:uniform_stability_strong_iht} also suggest that the generalization performance should be invariable to sparsity level $k$ provided that IHT is stable and sample size is sufficiently large. In order to check this point, we further conduct an experiment with a nearly sparse model $\bar w$ as considered in the previous black-box regimes to show the sensitivity of the generalization performance of IHT to sparsity level. More specifically, we set $\bar w = \bar w' + \varepsilon'$ where $\bar w'$ is a $k$-sparse sparse vector whose non-zero entries are drawn from zero-mean Gaussian distribution with sufficiently large variance, while $\varepsilon'$ is a zero-mean Gaussian noise vector with sufficiently small variance such that IHT is ensured to be stable. Figure~\ref{fig:stability_black_linear} shows the convergence curves of generalization gap and excess risk under varying $n/p\in (0.5,5)$ and $k\in \{100, 150, 200, 250\}$. It can be clearly observed from this group of results that the generalization performance of IHT is sensitive to $k$ for relatively smaller $n$ but becomes much less sensitive to $k$ as $n$ grows larger.

\section{Conclusions}
\label{sect:conclusion}

In this paper, we studied the generalization theory for the $\ell_0$-ERM estimator which has long been applied with remarkable success in high-dimensional data analysis. Traditional generalization theory for convex ERM, however, does not readily extend to such a non-convex and NP-hard problem regime due to the presence of cardinality constraint. By assuming the unknown nominal model to be truly sparse, we established a set of generalization gap and excess risk bounds for $\ell_0$-ERM with restricted strongly convex risk function. Particularly, up to logarithmic factors, our high probability excess risk bound is minimax optimal over the cardinality constraint. In a more realistic setting where the generative model of data is not accessible, we further derived a set of black-box generalization results using uniform convergence and uniform stability arguments. Blessed with restricted strong convexity, our generalization bounds are substantially tighter or comparable to those of the widely studied $\ell_2$-ERM.

On top of the generalization theory for $\ell_0$-ERM, we further established several high probability generalization bounds for the IHT algorithm which serves as one of the most popular first-order greedy selection methods for solving $\ell_0$-ERM. Particularly, we have shown that IHT generalizes well provided that the sample size is sufficiently large and the population risk function $F$ is stable with respect to IHT iteration. We have substantialized our results to sparse linear regression and sparse logistic regression models to demonstrate the applicability of our theory.

We expect that the theory developed in this article will fuel future investigation on $\ell_0$-ERM and/or IHT with non-convex loss functions such as those used in the common practice of deep neural nets pruning~\cite{frankle2019lottery,han2016deep}, but rarely studied in theory~\cite{sun2019optimization}. As a starting point, we have shown in Theorem~\ref{thrm:universe_generalizaion} a unform convergence bound $\mathcal{\tilde O} \left(\sqrt{k\log(p)/n}\right)$ in terms of the risk function value which is applicable to the non-convex setting. We conjecture that similar uniform convergence bounds can be derived for the gradient and Hessian estimation to match those established for certain $\ell_1$-regularized non-convex M-estimators~\cite{mei2018landscape}, which would be beneficial for understanding the landscape of a sparse neural network. Also, it is interesting to further explore the structure information such as the deep and wide architectures to hopefully obtain stronger generalization bounds for deep learning with sparsity.

\appendix

\makeatletter
\renewcommand\theequation{A.\@arabic\c@equation }
\makeatother \setcounter{equation}{0}

\section{Some auxiliary lemmas}\label{append:lemmas}

In this appendix section we collect a number of auxiliary lemmas that will be used in our analysis. The proofs of these lemmas are deferred to Appendix~\ref{ProofforAuxiliaryLemmas}.

\begin{lemma}\label{lemma:estimation_error}
Assume that $f$ is $\mu_s$-strongly convex. Then for any $w,w'$ such that $\|w- w'\|_0 \le s $ and $f(w)\le f(w')+\epsilon$ for some $\epsilon\ge 0$, the following bound holds
\[
\|w - w'\| \le \frac{2\|\nabla_{I \cup I'} f(w')\|}{\mu_s} \le \frac{2\sqrt{s}\|\nabla f(w')\|_\infty}{\mu_s}+ \sqrt{\frac{2\epsilon}{\mu_s}},
\]
where $I = \supp(w)$ and $I'=\supp(w')$. Moreover, if $w'_{\min}> \frac{2\sqrt{s}\|\nabla f(w')\|_\infty}{\mu_s}+ \sqrt{\frac{2\epsilon}{\mu_s}}$, then it holds that $\supp(w') \subseteq \supp(w)$.
\end{lemma}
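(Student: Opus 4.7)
The plan is to reduce the estimation error bound to a single one-variable quadratic inequality derived from restricted strong convexity. Since $\|w-w'\|_0 \le s$, I can directly apply the $\mu_s$-RSC inequality from Definition~\ref{def:strong_smooth} to the pair $(w,w')$, yielding
\[
f(w) \ge f(w') + \langle \nabla f(w'), w-w'\rangle + \frac{\mu_s}{2}\|w-w'\|^2.
\]
Combining with the hypothesis $f(w)\le f(w')+\epsilon$ rearranges to $\frac{\mu_s}{2}\|w-w'\|^2 \le \epsilon - \langle \nabla f(w'), w-w'\rangle$. Observing that $w-w'$ is supported inside $I\cup I'$, I would rewrite the inner product as $\langle \nabla_{I\cup I'} f(w'), w-w'\rangle$ and apply Cauchy-Schwarz to obtain the scalar inequality
\[
\tfrac{\mu_s}{2}\|w-w'\|^2 - \|\nabla_{I\cup I'} f(w')\|\cdot\|w-w'\| - \epsilon \le 0.
\]

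Next, I would solve this quadratic inequality in the unknown $\|w-w'\|$ via the quadratic formula, giving $\|w-w'\| \le \frac{\|\nabla_{I\cup I'}f(w')\| + \sqrt{\|\nabla_{I\cup I'}f(w')\|^2 + 2\mu_s\epsilon}}{\mu_s}$; the subadditivity $\sqrt{a+b}\le \sqrt{a}+\sqrt{b}$ then yields the first advertised bound $\|w-w'\| \le \frac{2\|\nabla_{I\cup I'}f(w')\|}{\mu_s} + \sqrt{2\epsilon/\mu_s}$. To pass from the $\ell_2$ norm of the restricted gradient to the $\ell_\infty$ norm of the full gradient, I would use $\|\nabla_{I\cup I'}f(w')\| \le \sqrt{s}\|\nabla f(w')\|_\infty$, which follows because the only coordinates contributing to the bilinear form above lie in $\supp(w-w')$, an index set of cardinality at most $s$. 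This gives the second inequality in the stated chain.

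For the ``moreover'' support-recovery claim, I would argue by contradiction. If there existed some $i\in \supp(w')\setminus \supp(w)$, then $w_i = 0$ and $|w'_i|\ge w'_{\min}$, forcing $\|w-w'\|\ge |w_i-w'_i|\ge w'_{\min}$. Combined with the just-derived upper bound, this would contradict the strict hypothesis $w'_{\min} > \frac{2\sqrt{s}\|\nabla f(w')\|_\infty}{\mu_s} + \sqrt{2\epsilon/\mu_s}$, so no such index exists and $\supp(w')\subseteq \supp(w)$.

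The argument is mostly mechanical once the right quadratic inequality is set up; the only mild subtlety is the careful bookkeeping of the coordinate restriction, which is what converts a naive $\sqrt{p}$ factor into the desired $\sqrt{s}$ and is the whole reason for invoking the \emph{restricted} rather than the global strong-convexity inequality.
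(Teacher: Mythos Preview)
Your argument is correct and follows essentially the same route as the paper: apply the RSC inequality, restrict the inner product to the support of $w-w'$, use Cauchy--Schwarz to obtain a quadratic in $\|w-w'\|$, solve it, and then argue the support claim by contradiction. You are in fact slightly more careful than the paper at the $\sqrt{s}$ step, correctly observing that the relevant index set is $\supp(w-w')$ (of cardinality at most $s$ by hypothesis), whereas the paper passes through $I\cup I'$ without explicitly verifying $|I\cup I'|\le s$; in the paper's applications this always holds, but your version is cleaner.
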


The following simple lemma that controls the maximum over a set of sub-Gaussian random variables is useful in our analysis.
\begin{lemma}\label{lemma:max_subgaussian_bound}
Let $X_1,..., X_p$ be $p\ge2$ zero-mean $\sigma^2$-sub-Gaussian random variables. Then
\[
\mathbb{E}\left[\max_{1\le j\le p }|X_j|\right] \le \sigma\sqrt{2\log(2p)} ,
\]
and
\[
\quad \mathbb{E}\left[\max_{1\le j\le p }X^2_j\right] \le \sigma^2(72 + 16\log p).
\]
\end{lemma}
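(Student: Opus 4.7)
The plan is to prove the two bounds by standard techniques for controlling suprema of sub-Gaussian random variables: a Chernoff/Laplace-transform (Jensen-exponential) argument for the first bound and a tail-integration argument for the second.

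First I would handle the $\sigma\sqrt{2\log(2p)}$ bound on $\mathbb{E}[\max_j |X_j|]$. The key observation is that
\[
\max_{1\le j\le p}|X_j| \;=\; \max_{1\le j\le 2p} Y_j,
\]
where $Y_1,\dots,Y_{2p}$ denotes the doubled list $X_1,-X_1,\dots,X_p,-X_p$. Since $-X_j$ is $\sigma^2$-sub-Gaussian with zero mean whenever $X_j$ is, every $Y_j$ satisfies $\mathbb{E}[e^{tY_j}]\le e^{\sigma^2 t^2/2}$. Then for any $t>0$, applying Jensen's inequality to the convex map $x\mapsto e^{tx}$ gives
\[
\exp\!\left(t\,\mathbb{E}\bigl[\max_j Y_j\bigr]\right)\;\le\;\mathbb{E}\Bigl[\max_j e^{tY_j}\Bigr]\;\le\;\sum_{j=1}^{2p}\mathbb{E}[e^{tY_j}]\;\le\;2p\,e^{\sigma^2 t^2/2}.
\]
Taking logarithms, dividing by $t$, and optimizing with $t^\star=\sigma^{-1}\sqrt{2\log(2p)}$ yields the claimed bound $\mathbb{E}[\max_j |X_j|]\le \sigma\sqrt{2\log(2p)}$.

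For the second bound I would use tail integration. The sub-Gaussian MGF bound together with a Chernoff argument yields $\mathbb{P}(|X_j|\ge u)\le 2\exp(-u^2/(2\sigma^2))$, hence $\mathbb{P}(X_j^2\ge t)\le 2\exp(-t/(2\sigma^2))$. A union bound and the trivial estimate $\mathbb{P}(\cdot)\le 1$ below a threshold $t_0$ give
\[
\mathbb{E}\bigl[\max_j X_j^2\bigr]\;=\;\int_0^{\infty}\mathbb{P}\bigl(\max_j X_j^2\ge t\bigr)\,dt\;\le\;t_0+\int_{t_0}^{\infty}2p\,e^{-t/(2\sigma^2)}\,dt.
\]
Choosing $t_0$ to be a constant multiple of $\sigma^2\log p$ and carrying out the remaining exponential integral produces a bound of the form $\sigma^2(c_1+c_2\log p)$, which I would then inflate to $\sigma^2(72+16\log p)$ by picking conservative constants large enough to cover the base case $p\ge 2$.

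Neither step is technically delicate; both arguments are textbook. The only bookkeeping work is to pick the split point $t_0$ and the associated slack so that the advertised explicit constants $(72,16)$ comfortably dominate the tighter bound that naturally emerges from the calculation for all $p\ge 2$. Because the paper later uses this lemma only as a ``black-box'' estimate (e.g.\ in the proof of Theorem~\ref{thrm:generalization_barw} where it controls $\mathbb{E}[\|\nabla\ell(\bar w;\xi)\|_\infty^2]$), keeping the form $\sigma^2(72+16\log p)$ rather than the sharper $2\sigma^2(1+\log(2p))$ costs nothing downstream.
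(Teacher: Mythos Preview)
Your argument for the first bound is exactly the standard Jensen--exponential/Chernoff trick, which is what the paper's cited reference does, so there is no difference there. For the second bound, however, you take a genuinely different route. The paper does \emph{not} integrate the tail; instead it argues that each $X_j^2$ is sub-exponential (specifically, $\mathbb{E}[\exp\{\lambda(X_j^2-\mathbb{E}[X_j^2])\}]\le\exp\{128\lambda^2\sigma^4\}$ for $|\lambda|\le 1/(16\sigma^2)$), then reuses the same Jensen--exponential trick on $\max_j X_j^2$ and plugs in $\lambda=1/(16\sigma^2)$, which is precisely what produces the constants $72$ and $16$. Your tail-integration approach is correct and more elementary---it avoids the sub-exponential MGF machinery and in fact yields the sharper $2\sigma^2(1+\log(2p))$---but the paper's route has the advantage that the advertised constants fall out mechanically from the MGF bound rather than requiring an ad hoc inflation step at the end.
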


\begin{lemma}\label{lemma:sub_Gaussian_bounds}
Under Assumption~\ref{assump:gradient_sub_gaussian}, for any $\delta\in(0,1)$ it holds with probability at least $1-\delta$ that
\[
\|\nabla F_S(\bar w)\|_\infty \le \sigma\sqrt{\frac{2\log(p/\delta)}{n}}.
\]
Moreover, the following expectation bounds hold:
\[
\mathbb{E}_S\left[\|\nabla F_S(\bar w)\|_\infty\right] \le \sigma\sqrt{\frac{2 \log (2p)}{n}},\quad \mathbb{E}_S\left[\|\nabla F_S(\bar w)\|^2_\infty\right] \le \frac{\sigma^2(72 + 16\log p)}{n}.
\]
\end{lemma}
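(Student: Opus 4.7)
The plan is to reduce everything to standard concentration facts about sub-Gaussian random variables. The key observation is that for each coordinate $j \in \{1,\ldots,p\}$,
\[
\nabla_j F_S(\bar w) = \frac{1}{n}\sum_{i=1}^n \nabla_j \ell(\bar w;\xi_i)
\]
is the average of $n$ i.i.d.\ zero-mean $\sigma^2$-sub-Gaussian random variables by Assumption~\ref{assump:gradient_sub_gaussian}. A one-line moment generating function computation then shows that this sample mean is itself zero-mean and $(\sigma^2/n)$-sub-Gaussian. Once this rescaling is in hand, the lemma is just a statement about the infinity norm (i.e.\ the coordinate-wise maximum in absolute value) of $p$ centered sub-Gaussian variables with parameter $\sigma^2/n$.

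For the high probability bound, I would apply the standard two-sided sub-Gaussian tail inequality to each coordinate, yielding for every $t>0$,
\[
\Pr\!\left(|\nabla_j F_S(\bar w)| \ge t\right) \le 2\exp\!\left(-\frac{nt^2}{2\sigma^2}\right),
\]
and then union bound over $j=1,\ldots,p$. Setting the failure probability equal to $\delta$ and inverting produces a threshold of the form $t = \sigma\sqrt{2\log(p/\delta)/n}$ up to absorbing the factor $2$ inside the logarithm, which matches the claimed bound.

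For the two expectation bounds, there is essentially nothing new to do: Lemma~\ref{lemma:max_subgaussian_bound} already bounds $\mathbb{E}[\max_j |X_j|]$ and $\mathbb{E}[\max_j X_j^2]$ for $p$ centered $\sigma^2$-sub-Gaussian random variables. Applying that lemma with the variance parameter $\sigma^2$ replaced by $\sigma^2/n$ (since, as noted above, each $\nabla_j F_S(\bar w)$ is $(\sigma^2/n)$-sub-Gaussian) immediately yields
\[
\mathbb{E}_S\!\left[\|\nabla F_S(\bar w)\|_\infty\right] \le \frac{\sigma}{\sqrt{n}}\sqrt{2\log(2p)}, \qquad \mathbb{E}_S\!\left[\|\nabla F_S(\bar w)\|_\infty^2\right] \le \frac{\sigma^2(72 + 16\log p)}{n}.
\]

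There is no genuine obstacle here; the proof is mechanical. The only point that deserves explicit justification is the rescaling lemma (``average of $n$ i.i.d.\ $\sigma^2$-sub-Gaussians is $(\sigma^2/n)$-sub-Gaussian''), which follows by bounding $\mathbb{E}[\exp(t \cdot \tfrac{1}{n}\sum_i X_i)] = \prod_i \mathbb{E}[\exp((t/n) X_i)] \le \exp(\sigma^2 t^2/(2n))$. After that, the argument is a straightforward assembly of a union bound for the tail inequality and a direct invocation of Lemma~\ref{lemma:max_subgaussian_bound} for the two moments.
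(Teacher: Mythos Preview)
Your proposal is correct and follows essentially the same route as the paper: reduce each coordinate to a $(\sigma^2/n)$-sub-Gaussian average, apply the sub-Gaussian tail plus a union bound for the high-probability statement, and invoke Lemma~\ref{lemma:max_subgaussian_bound} with the rescaled parameter for the two expectation bounds. The paper's only cosmetic difference is that it works with the sum $X_j=\sum_i \nabla_j\ell(\bar w;\xi_i)$ (which is $n\sigma^2$-sub-Gaussian) and divides by $n$ afterward, rather than passing to the average first.
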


\section{Proof of Main Results for $\ell_0$-ERM}
\label{append:proof_l0_erm}

\subsection{Proof of Theorem~\ref{thrm:generalization_barw}}
\label{apdsect:proof_generalization_barw}

This appendix subsection is devoted to providing a detailed proof of Theorem~\ref{thrm:generalization_barw} as restated below.
\ERMWhiteBoxBounds*

Before proving the main result, we first prove the following lemma which is key for deriving the in expectation bound in the main theorem.
\begin{lemma}\label{lemma:L2_error_expectation}
Assume that $\mathbb{E}_S\left[\|\nabla F_S(\bar w)\|^2_\infty\right]\le \delta_n$ for some $\delta_n\in (0,1)$. Assume that $F_S(w)$ is $\mu_{2k}$-strongly convex with probability at least $1- \delta'_n$. If $\delta'_n \le \min\left\{0.5, \frac{\delta_n}{4R^2}\right\}$, then
\[
\mathbb{E}_S\left[\|w_{S,k} - \bar w\|^2\right] \le \left(\frac{16k}{\mu^2_{2k}}+1\right)\delta_n.
\]
Moreover, let $\tilde w_{S,k}$ be an $k$-sparse solution such that $F_S(\tilde w_{S,k})\le F_S(\bar w) + \epsilon$. Then
\[
\mathbb{E}_S\left[\|\tilde w_{S,k} - \bar w\|^2\right] \le \left(\frac{32k}{\mu^2_{2k}}+1\right)\delta_n + \frac{4\epsilon}{\mu_{2k}}.
\]
\end{lemma}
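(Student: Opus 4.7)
The plan is to split the expectation according to whether or not the empirical risk $F_S$ is restricted strongly convex, and on the good event apply Lemma~\ref{lemma:estimation_error} to translate strong convexity into an $\ell_2$ estimation error bound driven by $\|\nabla F_S(\bar w)\|_\infty$. Define the event $\mathcal{E} = \{F_S \text{ is } \mu_{2k}\text{-strongly convex}\}$, so by assumption $\mathbb{P}(\mathcal{E}^c)\le \delta'_n$.

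On $\mathcal{E}$, note that $\|w_{S,k}-\bar w\|_0\le 2k$ and, because $w_{S,k}$ is the $\ell_0$-ERM minimizer while $\bar w$ is feasible, $F_S(w_{S,k})\le F_S(\bar w)$. Hence Lemma~\ref{lemma:estimation_error} (with $s=2k$ and $\epsilon=0$) gives
\[
\|w_{S,k}-\bar w\|^2\le \frac{8k\,\|\nabla F_S(\bar w)\|_\infty^2}{\mu_{2k}^2}.
\]
For the second part, $\tilde w_{S,k}$ is $k$-sparse with $F_S(\tilde w_{S,k})\le F_S(\bar w)+\epsilon$, so the same lemma combined with $(a+b)^2\le 2a^2+2b^2$ yields
\[
\|\tilde w_{S,k}-\bar w\|^2\le \frac{16k\,\|\nabla F_S(\bar w)\|_\infty^2}{\mu_{2k}^2}+\frac{4\epsilon}{\mu_{2k}}.
\]

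On $\mathcal{E}^c$, I would drop any reliance on strong convexity and instead use the bounded domain assumption $\mathcal{W}\subset B(0,R)$: for the $\ell_0$-ERM estimator (and for $\tilde w_{S,k}$ which lies in $\mathcal{W}$) we have the deterministic bound $\|w_{S,k}-\bar w\|^2\le (2R)^2=4R^2$. Splitting the expectation gives
\[
\mathbb{E}_S\bigl[\|w_{S,k}-\bar w\|^2\bigr] \le \frac{8k}{\mu_{2k}^2}\,\mathbb{E}_S\!\left[\|\nabla F_S(\bar w)\|_\infty^2\,\mathbf{1}_{\mathcal{E}}\right]+4R^2\,\mathbb{P}(\mathcal{E}^c).
\]
Using $\mathbb{E}_S[\|\nabla F_S(\bar w)\|_\infty^2\mathbf{1}_{\mathcal{E}}]\le \mathbb{E}_S[\|\nabla F_S(\bar w)\|_\infty^2]\le \delta_n$ and the hypothesis $\delta'_n\le \delta_n/(4R^2)$ so that $4R^2\delta'_n\le \delta_n$, the two inequalities of the lemma follow after mild loosening of constants. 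The argument for $\tilde w_{S,k}$ is identical modulo the additional $4\epsilon/\mu_{2k}$ term, which is deterministic and simply passes through the expectation.

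The main subtlety, and thus the step I would pay most attention to, is the handling of the bad event $\mathcal{E}^c$: there strong convexity is unavailable, so the gradient-based bound from Lemma~\ref{lemma:estimation_error} fails, and one must fall back on the domain radius $R$. This is precisely why the statement assumes $\mathcal{W}$ is bounded and why the sample-complexity-type hypothesis $\delta'_n\le \delta_n/(4R^2)$ appears: it is calibrated so that the crude $4R^2$ contribution from $\mathcal{E}^c$ does not dominate the $\mathcal{O}(k\delta_n/\mu_{2k}^2)$ contribution from $\mathcal{E}$. Everything else (taking conditional expectations, using the given moment bound $\mathbb{E}_S\|\nabla F_S(\bar w)\|_\infty^2\le \delta_n$) is routine, and the condition $\delta'_n\le 1/2$ is only needed to keep $\mathbb{P}(\mathcal{E})$ bounded away from zero so that the decomposition is meaningful.
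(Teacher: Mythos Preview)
Your proposal is correct and follows essentially the same route as the paper: split the expectation on the event $\mathcal{E}$ that $F_S$ is $\mu_{2k}$-strongly convex, invoke Lemma~\ref{lemma:estimation_error} on $\mathcal{E}$, and fall back on the crude domain bound $4R^2$ on $\mathcal{E}^c$. One minor difference in execution is worth noting: the paper passes through \emph{conditional} expectations, bounding $\mathbb{E}_S\bigl[\|\nabla F_S(\bar w)\|_\infty^2 \mid \mathcal{E}\bigr] \le 2\delta_n$ by using $\mathbb{P}(\mathcal{E}) \ge 1/2$ (this is precisely where the hypothesis $\delta'_n\le 1/2$ is used, and where the extra factor of $2$ enters to produce the stated constant $16k/\mu_{2k}^2$), whereas your indicator-based decomposition $\mathbb{E}[X\mathbf{1}_{\mathcal{E}}]\le \mathbb{E}[X]\le \delta_n$ is slightly cleaner, gives the sharper constant $8k/\mu_{2k}^2$, and does not actually require $\delta'_n\le 1/2$---so your closing remark about the role of that condition is off for your own argument, though it is accurate for the paper's.
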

\begin{proof}
Let us consider a random indication variable $Y_S$ defined by $Y_S=1$ if $F_S$ is $\mu_{2k}$-strongly convex, and $Y_S=0$ otherwise. Let $\delta = \min\left\{0.5, \frac{\delta_n}{4R^2}\right\}$. Then by assumption $\mathbb{P}(Y_S=1)\ge 1- \delta'_n \ge 1 - \delta $ and $\mathbb{P}(Y_S = 0) \le \delta'_n \le\delta$. Since $\mathbb{E}_S\left[\|\nabla F_S(\bar w)\|^2_\infty\right]\le \delta_n$, we have
\[
\begin{aligned}
\delta_n \ge& \mathbb{E}_S\left[\|\nabla F_S(\bar w)\|^2_\infty\right] = \mathbb{E}_{Y_S} \left[\mathbb{E}_{S\mid Y_S}\left[\|\nabla F_S(\bar w)\|^2_\infty \mid Y_S\right]\right]\\
=& \mathbb{P}(Y_S=1)\mathbb{E}_{S\mid Y_S=1}\left[\|\nabla F_S(\bar w)\|^2_\infty \mid Y_S=1\right] \\
&+ \mathbb{P}(Y_S=0)\mathbb{E}_{S\mid Y_S=0}\left[\|\nabla F_S(\bar w)\|^2_\infty \mid Y_S=0\right]\\
\ge& (1-\delta)\mathbb{E}_{S\mid Y_S=1}\left[\|\nabla F_S(\bar w)\|^2_\infty \mid Y_S=1\right]\\
\ge& 0.5\mathbb{E}_{S\mid Y_S=1}\left[\|\nabla F_S(\bar w)\|^2_\infty \mid Y_S=1\right],
\end{aligned}
\]
where we have used $\mathbb{P}(Y_S=1) \ge 1 - \delta\ge 0.5$ and $\|\nabla F_S(\bar w)\|^2_\infty\ge0$. It follows immediately from the above that
\[
\mathbb{E}_{S\mid Y_S=1}\left[\|\nabla F_S(\bar w)\|^2_\infty \mid Y_S=1\right] \le  2\delta_n.
\]
Therefore,
\[
\begin{aligned}
& \mathbb{E}_S\left[\|w_{S,k} - \bar w\|^2\right] = \mathbb{E}_{Y_S} \left[\mathbb{E}_{S\mid Y_S}\left[\| w_{S,k} - \bar w\|^2 \mid Y_S\right]\right]\\
=& \mathbb{P}(Y_S=1)\mathbb{E}_{S\mid Y_S=1}\left[\|w_{S,k} - \bar w\|^2 \mid Y_S=1\right] \\
& + \mathbb{P}(Y_S=0)\mathbb{E}_{S\mid Y_S=0}\left[\|w_{S,k} - \bar w\|^2 \mid Y_S=0\right] \\
\le& \mathbb{E}_{S\mid Y_S=1}\left[\|w_{S,k} - \bar w\|^2 \mid Y_S=1\right] + \delta\mathbb{E}_{S\mid Y_S=0}\left[\|w_{S,k} - \bar w\|^2 \mid Y_S=0\right]\\
\overset{\zeta_1}{\le}& \mathbb{E}_{S\mid Y_S=1}\left[\frac{8k\|\nabla F_S(\bar w)\|^2_\infty}{\mu_{2k}^2} \mid Y_S=1\right] + 4R^2\delta \le \left(\frac{16k}{\mu^2_{2k}}+1\right)\delta_n,
\end{aligned}
\]
where ``$\zeta_1$'' follows from the optimality of $w_{S,k}$ and applying Lemma~\ref{lemma:estimation_error} with $s=2k$ and $\epsilon = 0$. Similarly, for $\tilde w_{S,k}$ we can again use Lemma~\ref{lemma:estimation_error} to show that
\[
\begin{aligned}
&\mathbb{E}_S\left[\|\tilde w_{S,k} - \bar w\|^2\right] \\
\le& \mathbb{E}_{S\mid Y_S=1}\left[\|\tilde w_{S,k} - \bar w\|^2 \mid Y_S=1\right] + \delta\mathbb{E}_{S\mid Y_S=0}\left[\|\tilde w_{S,k} - \bar w\|^2 \mid Y_S=0\right]\\
\le& \mathbb{E}_{S\mid Y_S=1}\left[\frac{16k\|\nabla F_S(\bar w)\|^2_\infty}{\mu_{2k}^2} + \frac{4\epsilon}{\mu_{2k}} \mid Y_S=1\right] + 4R^2\delta \le \left(\frac{32k}{\mu^2_{2k}}+1\right)\delta_n + \frac{4\epsilon}{\mu_{2k}},
\end{aligned}
\]
This establishes the second desired bound.
\end{proof}

Now we are ready to prove the main theorem.
\begin{proof}[Proof of Theorem~\ref{thrm:generalization_barw}]
We first prove the high probability bound on excess risk. From Lemma~\ref{lemma:sub_Gaussian_bounds} we know that it holds with probability at least $1-\delta$ that
\[
\|\nabla F_S(\bar w)\|_\infty \le \sigma\sqrt{\frac{2\log(p/\delta)}{n}}.
\]
In the meanwhile, from the assumption we have that $F_S(w)$ is $\mu_{2k}$-strongly convex with probability at least $1- \delta'_n$, which according to Lemma~\ref{lemma:estimation_error} (with $\epsilon=0$) implies that with at least the same high probability
\[
\|w_{S,k} - \bar w\|^2 \le \frac{8k\|\nabla F_S(\bar w)\|^2_\infty}{\mu_{2k}^2}.
\]
The $L$-smoothness assumption of $\ell$ implies that $F$ is also $L$-strongly smooth. Since $\nabla F(\bar w)=0$, by union probability we get that with probability at least $1-\delta - \delta'_n$
\[
F(w_{S,k}) - F(\bar w) \le \frac{L}{2}\|w_{S,k} - \bar w\|^2 \le \frac{4kL\|\nabla F_S(\bar w)\|^2_\infty}{\mu_{2k}^2} \le \frac{8L}{\mu_{2k}^2}\left(\frac{k\sigma^2\log (p/\delta)}{n}\right),
\]
which is the desired excess risk bound. To prove the generalization gap bound, let us consider the following decomposition:
\[
F(w_{S,k}) - F_S(w_{S,k}) = \underbrace{F(w_{S,k}) - F(\bar w)}_{A_1} + \underbrace{F(\bar w) - F_S(\bar w)}_{A_2} + \underbrace{F_S(\bar w) - F_S(w_{S,k})}_{A_3}.
\]
The above argument has already shown that with probability at least $1-\delta'_n$ over $S$ the term $A_1$ can be bounded as
\[
A_1 \le \frac{L}{2}\|w_{S,k} - \bar w\|^2 \le \frac{4kL\|\nabla F_S(\bar w)\|^2_\infty}{\mu_{2k}^2} .
\]
Next we bound the term $A_3$ from above. Let $\bar I = \supp(\bar w)$ and $I_S=\supp(w_{S,k})$. By assumption $F_S(w)$ is $L$-strongly smooth and $\mu_{2k}$-strongly convex with probability at least $1- \delta'_n$, and thus the following holds with probability at least $1- \delta'_n$ over $S$:
\[
\begin{aligned}
A_3 =& F_S(\bar w) - F_S(w_{S,k}) \le \left|F_S(w_{S,k}) - F_S(\bar w) \right|\\
\le& \left|\langle \nabla F_S(\bar w), w_{S,k} - \bar w  \rangle\right| + \frac{L}{2}\|w_{S,k} - \bar w\|^2 \\
\le& \|\nabla_{\bar I \cup I_S} F_S(\bar w)\|\|w_{S,k} - \bar w\| + \frac{L}{2}\|w_{S,k} - \bar w\|^2 \\
\le& \sqrt{2k}\|\nabla F_S(\bar w)\|_\infty \|\|w_{S,k} - \bar w\| + \frac{L}{2}\|w_{S,k} - \bar w\|^2 \\
\le& \frac{k}{L}\|\nabla F_S(\bar w)\|^2_\infty + L \|w_{S,k} - \bar w\|^2 \le \left(\frac{1}{L} + \frac{8L}{\mu_{2k}^2}\right) k\|\nabla F_S(\bar w)\|^2_\infty,
\end{aligned}
\]
where in the last but one inequality we again have used fact $|ab| \le \frac{a^2}{2c} + \frac{cb^2}{2}$, $\forall c>0$. As a result, by union bound it holds with probability at least $1- \delta'_n - \delta/2$ over $S$ that
\[
\begin{aligned}
&A_1 + A_3 \le \left(\frac{1}{L} + \frac{12L}{\mu_{2k}^2}\right) k\|\nabla F_S(\bar w)\|^2_\infty \\
\le& 2 \left(\frac{1}{L} + \frac{12L}{\mu_{2k}^2}\right)\left(\frac{k\sigma^2\log (2p/\delta)}{n}\right)\le \frac{26L}{\mu_{2k}^2}\left(\frac{k\sigma^2\log (2p/\delta)}{n}\right),
\end{aligned}
\]
where in the last inequality we have used $L\ge \mu_{2k}$. Since $\ell(w;\xi)\le M$, by invoking Hoeffding inequality we know that with probability at least $1-\delta/2$ over $S$,
\[
A_2 = F(\bar w) - F_S(\bar w) \le M\sqrt{\frac{\log(2/\delta)}{2n}}.
\]
Finally, by union bound the following holds with probability at least $1- \delta'_n - \delta$ over $S$
\[
\begin{aligned}
F(w_{S,k}) - F_S(w_{S,k}) = A_1+A_2+ A_3 \le \frac{26L}{\mu_{2k}^2}\left(\frac{k\sigma^2\log (2p/\delta)}{n}\right) + M\sqrt{\frac{\log(2/\delta)}{2n}},
\end{aligned}
\]
which implies the desired generalization gap bound.

In the following, we derive the generalization bounds in expectation. From the decomposition $F(w_{S,k}) - F_S(w_{S,k}) = A_1+A_2+ A_3$ we can show that
\[
\begin{aligned}
&\left|\mathbb{E}_S[F(w_{S,k}) - F_S(w_{S,k})]\right| \\
=& \left|\mathbb{E}_S[F(w_{S,k}) - F(\bar w)] + \mathbb{E}_S[F(\bar w) - F_S(\bar w) ] + \mathbb{E}_S[F_S(\bar w) - F_S(w_{S,k})]\right| \\
\overset{\zeta_1}{=}& \left|\mathbb{E}_S[F(w_{S,k}) - F(\bar w)] + \mathbb{E}_S[F_S(\bar w) - F_S(w_{S,k})]\right| \\
\le& \mathbb{E}_S[\left|F(w_{S,k}) - F(\bar w)\right|] + \mathbb{E}_S[\left|F_S(\bar w) - F_S(w_{S,k})\right|] \\
\overset{\zeta_2}{\le}& \frac{L}{2}\mathbb{E}_S\left[\|w_{S,k} - \bar w\|^2\right] + \frac{k}{L}\mathbb{E}_S\left[\|\nabla F_S(\bar w)\|^2_\infty\right] + L \mathbb{E}_S\left[\|w_{S,k} - \bar w\|^2\right] \\
\le& \frac{3L}{2}\mathbb{E}_S\left[\|w_{S,k} - \bar w\|^2\right] + \frac{k}{L}\mathbb{E}_S\left[\|\nabla F_S(\bar w)\|^2_\infty\right]\\
\overset{\zeta_3}{\le}& \frac{3L}{2} \left(\frac{16k}{\mu^2_{2k}}+1\right)\delta_n + \frac{k}{L} \delta_n = \left(\frac{(24L^2 + \mu_{2k}^2)k}{L\mu^2_{2k}} + \frac{3L}{2}\right)\delta_n \le \left(\frac{25Lk}{\mu^2_{2k}} + \frac{3L}{2}\right)\delta_n,
\end{aligned}
\]
where in ``$\zeta_1$'' we have used $\mathbb{E}_S[F_S(\bar w)] = F(\bar w)$, $\zeta_2$ follows from the bounding results for $A_1, A_3$, and in ``$\zeta_3$'' we have used Lemma~\ref{lemma:sub_Gaussian_bounds} and Lemma~\ref{lemma:L2_error_expectation}. This proves the  generalization gap bound in expectation. The excess risk in expectation can then be bounded according to the basic fact $\mathbb{E}_S \left[F(w_{S,k}) - F(\bar w)\right] \le  \mathbb{E}_S \left[\Delta_{S,k} \right]$. This completes the proof.
\end{proof}

\subsection{Proof of Corollary~\ref{corol:generalization_barw_linearreg}}
\label{apdsect:proof_corollary_linear}

Here we provide a detailed proof of Corollary~\ref{corol:generalization_barw_linearreg} as restated below.
\ERMWhiteBoxBoundsLinaer*

The following lemma useful in our analysis.
\begin{lemma}\label{lemma:strong_convexity_subgaussian}
Suppose $x_i$ are drawn i.i.d. from a zero-mean sub-Gaussian distribution with covariance matrix $\Sigma\succ 0$. Let $X = [x_1,...,x_n]\in \mathbb{R}^{d\times n}$. Assume that $\Sigma_{jj}\le \sigma^2$. Then there exist universal positive constants $c_0$ and $c_1$ such that for all $w\in \mathbb{R}^p$
\[
\begin{aligned}
\frac{\|X^\top w\|^2}{n} \ge& \frac{1}{2}\|\Sigma^{1/2}w\|^2 - c_1\frac{\sigma^2\log (p)}{n} \|w\|^2_1
\end{aligned}
\]
holds with probability at least $1- \exp\{-c_0 n\}$.
\end{lemma}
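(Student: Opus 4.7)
The lemma is a restricted eigenvalue bound for the sample covariance of a sub-Gaussian design, and the plan is to reduce it to a uniform deviation bound on $\Delta := \hat\Sigma - \Sigma$ where $\hat\Sigma := XX^\top/n$. Rearranging, it suffices to prove that on a single high-probability event,
$$|w^\top \Delta w| \le \tfrac{1}{2}\|\Sigma^{1/2}w\|^2 + c_1\sigma^2\tfrac{\log p}{n}\|w\|_1^2 \quad \text{for every } w\in\mathbb{R}^p,$$
which I would establish by combining an $\epsilon$-net argument on sparse spheres with a peeling decomposition for non-sparse $w$.

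The first step is pointwise concentration. For any fixed $w$ with $\|w\|_2=1$, the variables $(w^\top x_i)^2 - w^\top\Sigma w$ are i.i.d., centered, and sub-exponential with parameter $O(\sigma^2)$ (since $w^\top x_i$ is sub-Gaussian of order $\|w\|_2\cdot\max_j\sqrt{\Sigma_{jj}}\le\sigma$), so Bernstein's inequality gives $\Pr[|w^\top\Delta w|\ge t\sigma^2]\le 2\exp(-cn\min(t,t^2))$. The second step lifts this to a uniform bound over $s$-sparse directions: covering each $s$-dimensional unit sphere by a $(1/4)$-net of size $9^s$, union-bounding over the $\binom{p}{s}\le(ep/s)^s$ possible supports, and applying the standard net-to-sphere approximation yields, with probability at least $1-\exp(-c_0 n)$ whenever $n\gtrsim s\log(ep/s)$,
$$\sup_{\|w\|_2\le 1,\ \|w\|_0\le s} |w^\top\Delta w|\ \le\ C\sigma^2\sqrt{\tfrac{s\log(ep/s)}{n}}.$$

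The main obstacle is the third step: extending the sparse bound to arbitrary $w\in\mathbb{R}^p$ with an $\ell_1^2$ overhead rather than the crude $\|\Delta\|_{\max}\|w\|_1^2$ bound (which only gives a $\sqrt{\log p/n}$ rate and is too weak). I would use the largest-coordinates peeling: partition the coordinates of $w$ into consecutive blocks $T_0,T_1,\dots$ of size $s$ ordered by decreasing magnitude, and let $w^{(i)}$ denote $w$ restricted to $T_i$. The block norms satisfy $\|w^{(i)}\|_2\le\|w^{(i-1)}\|_1/\sqrt{s}$ for $i\ge 1$, hence $\sum_{i\ge 0}\|w^{(i)}\|_2\le \|w\|_2+\|w\|_1/\sqrt{s}$. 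Expanding $w^\top\Delta w=\sum_{i,j}(w^{(i)})^\top\Delta w^{(j)}$, each cross term is bounded via polarization applied to $2s$-sparse vectors, so the Step~2 bound controls the whole sum and
$$|w^\top\Delta w|\ \le\ 2C\sigma^2\sqrt{\tfrac{s\log(ep/s)}{n}}\,\|w\|_2^2\ +\ 2C\sigma^2\sqrt{\tfrac{\log(ep/s)}{sn}}\,\|w\|_1^2.$$

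To conclude, I would choose $s$ so that $C\sigma^2\sqrt{s\log(ep/s)/n}\le\tfrac14\lambda_{\min}(\Sigma)$; concretely $s\asymp \lambda_{\min}(\Sigma)^2 n/(\sigma^4\log p)$. This absorbs the $\|w\|_2^2$ term into $\tfrac12\|\Sigma^{1/2}w\|^2$, while simultaneously producing $\sqrt{\log(ep/s)/(sn)}\le c'\log(p)/n$ with a constant depending on $\lambda_{\min}(\Sigma)/\sigma^2$, yielding the claimed $c_1\sigma^2\log(p)/n\cdot\|w\|_1^2$ penalty. The failure probability is at most $\exp(-c_0 n)$ by the Step~2 bound for this choice of $s$, giving universal constants $c_0,c_1$ as stated.
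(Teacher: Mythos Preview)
The paper does not actually prove this lemma; it simply cites \cite[Lemma~6]{agarwal2012fast}. Your sketch is the right general architecture (pointwise Bernstein, net over sparse spheres, peeling to pass from sparse to arbitrary $w$), and this is indeed how the cited result is proved. However, there is a genuine gap in your Step~1 that propagates to Step~3 and prevents you from obtaining \emph{universal} constants.

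In Step~1 you assert that $w^\top x_i$ is sub-Gaussian with parameter $\|w\|_2\max_j\sqrt{\Sigma_{jj}}\le\sigma\|w\|_2$. This is false in general: take $x_i=(g,g)$ with $g\sim N(0,1)$, so $\Sigma_{jj}=1$, yet for $w=(1,1)/\sqrt2$ the linear form $w^\top x_i=\sqrt2\,g$ has sub-Gaussian parameter $\sqrt2>\|w\|_2$. The correct parameter under the $\Sigma$-sub-Gaussian design assumption used in \cite{agarwal2012fast} is $\|\Sigma^{1/2}w\|_2$, not $\sigma\|w\|_2$. With your weaker (and incorrect) parameter, the first term in your Step~3 bound is $2C\sigma^2\sqrt{s\log(ep/s)/n}\,\|w\|_2^2$, and absorbing it into $\tfrac12\|\Sigma^{1/2}w\|^2$ forces $s$ to depend on $\lambda_{\min}(\Sigma)/\sigma^2$. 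You note this yourself (``with a constant depending on $\lambda_{\min}(\Sigma)/\sigma^2$'') and then immediately contradict it by claiming universal $c_0,c_1$. That final claim does not follow from your argument.

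The fix is to run Step~1 with the sharper sub-Gaussian parameter $\|\Sigma^{1/2}w\|_2$; then the peeling bound in Step~3 produces a leading term of the form $C\sqrt{s\log(ep/s)/n}\,\|\Sigma^{1/2}w\|^2$, which can be absorbed into $\tfrac12\|\Sigma^{1/2}w\|^2$ by choosing $s\asymp n/\log p$ \emph{universally}, and the residual $\ell_1^2$ term then carries the universal constant $c_1$. The cross terms in the peeling also need to be handled in the $\Sigma^{1/2}$-geometry (this requires a bit of care, as in Raskutti--Wainwright--Yu), but once Step~1 is corrected the rest of your plan goes through.
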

The lemma follows immediately from~\cite[Lemma 6]{agarwal2012fast}. Based on this lemma and the fact $\|w\|_1\le \sqrt{k}\|w\|$ when $\|w\|_0\le k$, it holds with probability at least $1- \exp\{-c_0 n\}$ that $F_S(w)$ is $\mu_{2k}$-strongly convex with
\[
\mu_{2k}= \frac{1}{2}\lambda_{\min}(\Sigma) - \frac{k c_1\log (p)}{n}.
\]
Provided that $n \ge \frac{4kc_1 \log (p)}{\lambda_{\min}(\Sigma)}$, we have $\mu_{2k}\ge \frac{1}{4}\lambda_{\min}(\Sigma)$ holds with probability at least $1- \exp\{-c_0 n\}$. Now we are ready to prove Corollary~\ref{corol:generalization_barw_linearreg}.

\begin{proof}[Proof of Corollary~\ref{thrm:generalization_barw}]
Let $\xi=\{x,\varepsilon\}$ in which $x$ is zero-mean sub-Gaussian with covariance matrix $\Sigma \succ 0$ and $\varepsilon$ is zero-mean $\sigma^2$-sub-Gaussian. Obviously $\nabla F(\bar w)=\mathbb{E}_{\xi}\left[\nabla \ell(\bar w;\xi)\right]=\mathbb{E}_{\varepsilon,x}\left[-\varepsilon x\right]=0$. Given that $\Sigma_{jj} \le 1$, it can be shown that $\nabla_j \ell(\bar w;\xi_i) = -\varepsilon_i [x_i]_j$ are zero-mean $\sigma^2$-sub-Gaussian variables, which indicates that Assumption~\ref{assump:gradient_sub_gaussian} holds. By assuming $\|x_i\|\le 1$, we get that $\ell(w;\xi_i)$ is $L$-strongly smooth with $L=1$. Lemma~\ref{lemma:strong_convexity_subgaussian} implies that if $n \ge \frac{4kc_1 \log (p)}{\lambda_{\min}(\Sigma)}$, then it holds with probability at least $1- \exp\{-c_0 n\}$ that $F_S(w)$ is $\mu_{2k}$-strongly convex with $\mu_{2k}\ge \frac{1}{4}\lambda_{\min}(\Sigma)$. By applying the high probability bound in Theorem~\ref{thrm:generalization_barw} we obtain that with probability at least $1-\delta-\exp\{-c_0 n\}$,
\[
F(w_{S,k}) - F(\bar w) \le \mathcal{O}\left(\frac{1}{\lambda^2_{\min}(\Sigma)}\left(\frac{k\sigma^2\log (p/\delta)}{n}\right)\right).
\]
When sample size $n$ is sufficiently large such that $\exp\{-c_0 n\}\le \min\left\{0.5, \frac{\delta_n}{4R^2}\right\}$, it follows from the expected generalization bounds in Theorem~\ref{thrm:generalization_barw} that
\[
\mathbb{E}_S \left[F(w_{S,k}) - F(\bar w)\right] \le \mathbb{E}_S \left[F(w_{S,k}) - F_S(w_{S,k})\right] \le \mathcal{O}\left(\frac{1}{\lambda^2_{\min}(\Sigma)}\left(\frac{k\sigma^2\log (p/\delta)}{n}\right)\right).
\]
This implies the desired bounds.
\end{proof}

\newpage

\subsection{Proof of Corollary~\ref{corol:generalization_barw_logisticreg}}
\label{apdsect:proof_corollary_logistic}
In this subsection, we prove Corollary~\ref{corol:generalization_barw_logisticreg} as restated below.
\ERMWhiteBoxBoundsLogistic*

\begin{proof}[Proof of Corollary~\ref{corol:generalization_barw_logisticreg}]
Let $\xi=\{x,y\}$ in which $x$ is zero-mean sub-Gaussian with covariance matrix $\Sigma \succ 0$ and $y\in\{-1,1\}$ is generated by $\mathbb{P}(y|x; \bar w) = \frac{\exp (2y\bar w^\top x)}{1+\exp (2y\bar w^\top x)}$. The logistic loss function at $\xi_i$ is given by $\ell(w;\xi_i) = \log(1+\exp(-2y_i w^\top x_i))$. We first show that $\nabla F(\bar w)=\mathbb{E}_{\xi}\left[\nabla \ell(\bar w;\xi)\right]=0$. Indeed,
\[
\begin{aligned}
&\mathbb{E}_{\xi}\left[\nabla \ell(\bar w;\xi)\right] \\
=& \mathbb{E}_{x,y}\left[ \nabla \log(1+\exp(-2y \bar w^\top x))\right] = \mathbb{E}_{x} \left[\mathbb{E}_{y\mid x}\left[\nabla \log(1+\exp(-2y \bar w^\top x))\mid x\right]\right]\\
=& \mathbb{E}_{x} \left[\mathbb{P}(y=1\mid x)\nabla \log(1+\exp(-2 \bar w^\top x)) + \mathbb{P}(y=-1\mid x)\nabla \log(1+\exp(2 \bar w^\top x)) \right]\\
=& \mathbb{E}_{x} \left[\frac{\exp (2\bar w^\top x)}{1+\exp (2\bar w^\top x)}\frac{-2x \exp(-2 \bar w^\top x)}{1+\exp(-2 \bar w^\top x)} + \frac{1}{1+\exp (2\bar w^\top x)}\frac{2x \exp(2 \bar w^\top x)}{1+\exp(2 \bar w^\top x)} \right]=0.
\end{aligned}
\]
Next we show that $\nabla_j \ell(\bar w;\xi)=\frac{-2 y [x]_j\exp(-2y \bar w^\top x)}{1+\exp(-2y \bar w^\top x)}$ is a zero-mean sub-Gaussian random variable. Clearly, $\mathbb{E}[\nabla_j \ell(\bar w;\xi)]=0$. Since $y\in\{-1,1\}$ and $[x]_j$ is $\frac{\sigma^2}{32}$-sub-Gaussian, we can show the following
\[
\begin{aligned}
&\mathbb{P}\left( |\nabla_j \ell(\bar w;\xi)| \ge t\right) \\
=& \mathbb{P}\left( \frac{2 |[x]_j|\exp(-2y \bar w^\top x)}{1+\exp(-2y \bar w^\top x)} \ge t\right) \le \mathbb{P}\left(|[x]_j| \ge \frac{t}{2}\right) \le  2\exp\left(-\frac{4t^2}{\sigma^2}\right).
\end{aligned}
\]
Then based on the result~\cite[Lemma 1.5]{rigollet201518}  we know that for any $\lambda>0$,
\[
\mathbb{E}_{\xi}\left[\exp(\lambda\nabla_j \ell(\bar w;\xi))\right]\le \exp\left(\frac{\lambda^2\sigma^2}{2}\right),
\]
which shows that $\nabla_j \ell(\bar w;\xi)$ is $\sigma^2$-sub-Gaussian. This verifies the validness of Assumption~\ref{assump:gradient_sub_gaussian}.

Given that $\|x_i\|\le 1$, we have $\ell(w;\xi_i)$ is $L$-strongly smooth with $L\le4s(2y_iw^\top x_i)(1-s(2y_iw^\top x_i))\le1$. Since $\|w\|\le R$ and $\|x_i\|\le 1$, we must have $|y_iw^\top x_i|\le R$ and thus $[\Lambda(w)]_{ii}=4s(2y_iw^\top x_i)(1-s(2y_iw^\top x_i))\ge \frac{4}{(1+\exp(2R))^2}\ge\frac{1}{\exp(4R)}$. It follows that
\[
\nabla^2 F_S(w) = \frac{1}{n}X \Lambda(w) X^\top \succeq \frac{1}{n\exp(4R)} XX^\top.
\]
By invoking Lemma~\ref{lemma:strong_convexity_subgaussian} we obtain that if $n \ge \frac{4\sigma^2kc_1 \log (p)}{\lambda_{\min}(\Sigma)}$, then it holds with probability at least $1- \exp\{-c_0 n\}$ that $F_S(w)$ is $\mu_{2k}$-strongly convex with $\mu_{2k}\ge \frac{\lambda_{\min}(\Sigma)}{\exp(4R)}$. By applying the high probability bound in Theorem~\ref{thrm:generalization_barw} we obtain that with probability at least $1-\delta-\exp\{-c_0 n\}$,
\[
F(w_{S,k}) - F(\bar w) \le \mathcal{O}\left(\frac{\exp(8R)}{\lambda^2_{\min}(\Sigma)}\left(\frac{k \sigma^2 \log (p/\delta)}{n}\right)\right).
\]
When sample size $n$ is sufficiently large such that $\exp\{-c_0 n\}\le \min\left\{0.5, \frac{\delta_n}{4R^2}\right\}$, it follows from the expected generalization bounds in Theorem~\ref{thrm:generalization_barw} that
\[
\mathbb{E}_S \left[F(w_{S,k}) - F(\bar w)\right]\le \mathbb{E}_S \left[F(w_{S,k}) - F_S(w_{S,k})\right] \le \mathcal{O}\left(\frac{\exp(8R)}{\lambda^2_{\min}(\Sigma)}\left(\frac{k \sigma^2 \log (p/\delta)}{n}\right)\right).
\]
The concludes the proof.
\end{proof}

\subsection{Proof of Theorem~\ref{thrm:universe_generalizaion}}
\label{apdsect:proof_universe_generalizaion}

Here we prove Theorem~\ref{thrm:universe_generalizaion} as restated below.
\ERMBlackBoxBoundsUniform*

We need the following lemma which guarantees uniform convergence of $F_S(w)$ towards $F(w)$ for all $w$ when the loss function is Lipschitz continuous and the optimization is limited on a bounded domain.
\begin{lemma}\label{lemma:universe_support}
Assume that the domain of interest $\mathcal{W}\subset \mathbb{R}^p$ is bounded by $R$ and the loss function $\ell(w;\xi)$ is $G$-Lipschitz continuous with respect to $w$. Then for any $\delta\in(0,1)$ and $m\ge5$, there exists a universal constant $c_0$ such that the following bound holds with probability at least $1-\delta$ over the random draw of sample set $S$ for all $w\in \mathcal{W}$,
\[
\left|F(w) - F_S(w)\right| \le \mathcal{O}\left(\sqrt{\frac{GR(\log(c_0/\delta)+ m p\log(p))}{n}}\right),
\]
provided that
\[
\frac{4G(\log(c_0/\delta)+mp\log(p))}{R} \le n \le \frac{p^m(\log(c_0/\delta)+mp\log(p))}{GR}.
\]
\end{lemma}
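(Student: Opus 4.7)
The plan is to use the classical covering-number / $\epsilon$-net argument together with a pointwise concentration inequality to obtain uniform convergence over the bounded domain $\mathcal{W}\subset\mathbb{R}^p$. First, I would construct an $\epsilon$-net $N_\epsilon \subset \mathcal{W}$ of cardinality at most $(3R/\epsilon)^p$, using the standard volume-comparison covering bound for Euclidean balls of radius $R$ in $\mathbb{R}^p$. This produces a finite proxy to $\mathcal{W}$ over which we can use union-bound arguments.

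Second, for each fixed $w\in N_\epsilon$ I would exploit the $G$-Lipschitz property of $\ell$ together with the boundedness of $\mathcal{W}$ to conclude that the per-sample losses $\ell(w;\xi_i)$ lie in a range of length $O(GR)$ (after centering by $\ell(0;\xi_i)$ if necessary). A pointwise concentration inequality---Hoeffding, or Bernstein if one wants to exploit the variance being of order $GR$ under the bounded-loss normalization---then yields a deviation bound of the form $|F_S(w)-F(w)|\lesssim\sqrt{GR\log(1/\delta)/n}$ for fixed $w$, with probability at least $1-\delta$. A union bound over the $|N_\epsilon|\le (3R/\epsilon)^p$ grid points then produces the log-covering-number contribution $p\log(3R/\epsilon)$ inside the square-root, replacing $\log(1/\delta)$ by $\log(1/\delta)+p\log(3R/\epsilon)$.

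Third, to pass from $N_\epsilon$ back to every $w\in\mathcal{W}$, I would pick any closest net point $w_\epsilon\in N_\epsilon$ with $\|w-w_\epsilon\|\le\epsilon$ and use the $G$-Lipschitz property of $F$ and $F_S$ (inherited from $\ell$) to write
\[
|F(w)-F_S(w)|\le |F(w)-F(w_\epsilon)|+|F(w_\epsilon)-F_S(w_\epsilon)|+|F_S(w_\epsilon)-F_S(w)|\le 2G\epsilon+|F(w_\epsilon)-F_S(w_\epsilon)|,
\]
so the uniform bound on the net extends to $\mathcal{W}$ at the price of an additive $2G\epsilon$ error.

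Finally, I would choose $\epsilon$ of order $R/p^{m/2}$, so that $\log(3R/\epsilon)=O(m\log p)$ and the covering-number term inside the square-root becomes exactly the $mp\log(p)$ appearing in the statement. The stated lower bound $n\ge 4G(\log(c_0/\delta)+mp\log p)/R$ is precisely what is needed to guarantee that the discretization overhead $G\epsilon$ is dominated by (and can be absorbed into) the net concentration term, while the stated upper bound $n\le p^m(\log(c_0/\delta)+mp\log p)/(GR)$ guarantees that the chosen $\epsilon$ is meaningful relative to the resolution permitted by the sample. The main obstacle I expect is matching the exact coefficient $\sqrt{GR}$ in the stated bound rather than the more typical $GR$ that a naive Hoeffding application produces: this likely requires a Bernstein-type bound that exploits the variance of the per-sample loss being of order $GR$ (rather than $(GR)^2$), together with careful bookkeeping of constants in the union-bound step and in the calibration of $\epsilon$ to the two-sided window on $n$.
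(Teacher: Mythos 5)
Your plan is essentially the paper's proof. The paper likewise reduces the statement to a covering-number bound for $\mathcal{W}$ (citing a sphere-covering result giving $\mathcal{N}(\epsilon,\mathcal{W},\ell_2)=\mathcal{O}\left(p^{3/2}\log(p)(R/\epsilon)^p\right)$), transfers it to an $\ell_\infty$-cover of the loss class $\mathcal{L}=\{\xi\mapsto\ell(w;\xi)\}$ via the $G$-Lipschitz property, combines covering number with a concentration tail, and finally sets the deviation level $\epsilon=\sqrt{GR(\log(c_0/\delta)+mp\log(p))/n}$; the two-sided window on $n$ is exactly what makes $\mathcal{N}\cdot\exp(-n\epsilon^2/(GR))\le\delta/c_0$ hold and keeps $\epsilon\le R/2$ so the covering bound applies. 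Your explicit net-plus-union-bound-plus-$2G\epsilon$ discretization is the unpacked version of the packaged uniform deviation inequality the paper cites from Pollard, and your choice $\epsilon\sim R/p^{m/2}$ for the net radius corresponds to the paper's upper constraint $n\le p^m(\log(c_0/\delta)+mp\log(p))/(GR)$, which forces the deviation level to be at least $GR/p^{m/2}$.

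The one substantive issue is the one you flagged yourself: the $\sqrt{GR}$ scaling. The paper obtains it by invoking Pollard's uniform bound in the form $\mathbb{P}\left(\sup_{w}|F(w)-F_S(w)|\ge\epsilon\right)\le\mathcal{O}\left(\mathcal{N}(\epsilon,\mathcal{L},\ell_\infty)\exp(-n\epsilon^2/(GR))\right)$, i.e.\ with $GR$ rather than $(GR)^2$ in the denominator of the exponent, and offers no justification beyond the citation. Your proposed repair --- Bernstein with per-sample variance of order $GR$ --- does not follow from the stated hypotheses: a $G$-Lipschitz loss on a domain of radius $R$ has range $\mathcal{O}(GR)$ and hence variance bounded only by $\mathcal{O}((GR)^2)$, and nothing in the lemma forces it to be smaller. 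So as written, your second step either yields the weaker rate $GR\sqrt{(\log(1/\delta)+mp\log(p))/n}$ (plain Hoeffding on the net) or rests on a variance hypothesis that is not available. To reproduce the bound exactly as stated you would have to do what the paper does and take the $\exp(-n\epsilon^2/(GR))$ tail as given from the cited uniform convergence inequality; otherwise your argument is complete but proves the lemma with $GR$ in place of $\sqrt{GR}$ inside the square root, which still suffices for the $\mathcal{\tilde O}\left(\sqrt{k\log(p)/n}\right)$ rate claimed downstream in Theorem~\ref{thrm:universe_generalizaion}, up to the dependence on $G$ and $R$.
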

\begin{proof}
As a subset of an $\ell_2$-sphere, for all $\epsilon \le R/2$, based on the result in~\cite{boroczky2003covering} we can bound the covering number of $\mathcal{W}$ with respect to the $\ell_2$-distance as
\[
\mathcal{N}(\epsilon, \mathcal{W}, \ell_2) = \mathcal{O}\left(p^{3/2}\log(p) \left(\frac{R}{\epsilon}\right)^p\right).
\]
Since the loss function $\ell(w;\xi)$ is $G$-Lipschitz continuous with respect to $w$, it can be verified that the covering number of the class of functions $\mathcal{L}=\left\{\xi \mapsto \ell(w;\xi)\mid w \in \mathcal{W}\right\}$ with respect to $\ell_\infty$-distance $\ell_\infty(\ell(w_1;\cdot),\ell(w_2;\cdot)):=\sup_{\xi} |\ell(w_1;\xi) - \ell(w_2;\xi)|$ is given by
\[
\mathcal{N}(\epsilon, \mathcal{L}, \ell_\infty) \le \mathcal{N}(\epsilon/G, \mathcal{L}, \ell_2) = \mathcal{O}\left(p^{3/2}\log(p) \left(\frac{GR}{\epsilon}\right)^p\right).
\]
Then based on a uniform bound from~\cite{pollard2012convergence} we know that
\[
\begin{aligned}
&\mathbb{P}\left(\sup_{w\in \mathcal{W}}|F(w)-F_S(w)|\ge \epsilon\right) \\
\le& \mathcal{O}\left(\mathcal{N}(\epsilon, \mathcal{L}, \ell_\infty)\exp\left(-\frac{n\epsilon^2}{GR}\right)\right) \le c_0 p^{3/2}\log(p)\left(\frac{GR}{\epsilon}\right)^p\exp\left(-\frac{n\epsilon^2}{GR}\right),
\end{aligned}
\]
where $c_0$ is a universal constant. To guarantee $\mathbb{P}\left(\sup_{w\in \mathcal{W}}|F(w)-F_S(w)|\ge \epsilon\right) \le\delta$, we need
\[
p^{3/2}\log(p)\left(\frac{GR}{\epsilon}\right)^p\exp\left(-\frac{n\epsilon^2}{GR}\right) \le \delta/c_0,
\]
or equivalently
\[
\frac{3}{2} \log(p) + \log\log(p) + p \log(GR) - p\log(\epsilon) - \frac{n\epsilon^2}{GR} \le \log\left(\frac{\delta}{c_0}\right).
\]
Setting $\epsilon= \sqrt{\frac{GR(\log(c_0/\delta)+ m p\log(p))}{n}}$, the above inequality holds if
\[
\begin{aligned}
& 3 \log(p) /2+ \log\log(p) + p \log(GR) \\
&- \frac{p}{2}\log\left(\frac{GR(\log(c_0/\delta)+ m p\log(p))}{n}\right) -mp\log(p) \le 0 \\
\overset{p\ge1}{\Leftarrow} & 2.5\log(p) + p \log(GR) - \frac{p}{2}\log\left(\frac{GR(\log(c_0/\delta)+ m p\log(p))}{n}\right) -mp\log(p) \le 0 \\
\overset{m\ge5}{\Leftarrow} & p \log(GR) - \frac{p}{2}\log\left(\frac{GR(\log(c_0/\delta)+ m p\log(p))}{n}\right) -\frac{mp}{2}\log(p) \le 0 \\
\Leftrightarrow & GR \le p^{m/2}\sqrt{\frac{GR(\log(c_0/\delta)+ m p\log(p))}{n}}  \Leftrightarrow n \le \frac{p^m(\log(c_0/\delta)+mp\log(p))}{GR}.
\end{aligned}
\]
The condition $\epsilon \le R/2$ leads to the requirement
\[
n\ge \frac{4G(\log(c_0/\delta)+mp\log(p))}{R}.
\]
This proves the desired result in the lemma.
\end{proof}

Based on this lemma, we can readily prove the main result in the theorem.
\begin{proof}[Proof of Theorem~\ref{thrm:universe_generalizaion}]

Let $\mathcal{J}=\{J \subseteq \{1,...,p\}: |J|= k\}$ be the set of index set of cardinality $k$. For any fixed supporting set $J \in \mathcal{J}$, by applying Lemma~\ref{lemma:universe_support} with $m=10$ we obtain that the following uniform convergence bound holds for all $w$ with $\supp(w) \subseteq J$ with probability at least $1-\delta$ over $S$:
\[
\left|F(w) - F_S(w)\right| \le \mathcal{O}\left(\sqrt{\frac{GR(\log(c_0/\delta) + 10k\log(k))}{n}}\right).
\]
For any $k$-sparse vector $w$ we always have $\supp(w) \in \mathcal{J}$. Then by union probability we get that with probability at least $1-\delta$, the following bound holds for all $w\in \mathcal{W}$ with $\|w\|_0\le k$:
\[
\left|F(w)  - F_S(w)\right| \le  \mathcal{O}\left(\sqrt{\frac{GR(10k\log(k)+ \log(|\mathcal{J}|) + \log(c_0/\delta))}{n}}\right).
\]
It remains to bound the cardinality $|\mathcal{J}|$. From~\cite[Lemma 2.7]{rigollet201518} we know $|\mathcal{J}|=\binom{p}{k}\le \left(\frac{ep}{k}\right)^k$, which then implies the desired generalization bound.
\end{proof}

\newpage

\subsection{Proof of Proposition~\ref{thrm:uniform_stability}}
\label{apdsect:proof_uniform_stability}

In this subsection, we prove Proposition~\ref{thrm:uniform_stability} as restated below.
\ERMBlackBoxBoundsStability*

To prove this theorem, we need the following lemma from~\cite[Theorem 1.1]{feldman2019high} which gives a nearly tight generalization bound for uniformly stable learning algorithms.
\begin{lemma}\label{lemma:uniform_stability}
Let $A: \mathcal{X}^n \mapsto \mathcal{W}$ be a learning algorithm that has uniform stability $\gamma$  with respect to a loss function $\ell$. Then for any $\delta\in(0,1)$, the following generalization error bound holds with probability at least $1-\delta$ over $S$:
\[
\mathbb{E}_{\xi}\left[\ell(A(S);\xi)\right] \le \frac{1}{n}\sum_{i=1}^n \ell(A(S), \xi_i) + \mathcal{O}\left(\gamma\log(n)\log(n/\delta) + \sqrt{\frac{\log(1/\delta)}{n}}\right).
\]
\end{lemma}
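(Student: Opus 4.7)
The plan is to follow the high-probability uniform-stability framework of Feldman and Vondrak. Define the centered per-example discrepancy
\[
D_i(S) := \ell(A(S);\xi_i) - \mathbb{E}_{\xi \sim D}[\ell(A(S);\xi)],
\]
so that the generalization gap to be bounded equals $-\tfrac{1}{n}\sum_{i=1}^{n} D_i(S)$. The goal is a high-probability upper bound on $\bigl|\tfrac{1}{n}\sum_{i=1}^{n} D_i(S)\bigr|$.

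First I would re-derive the classical in-expectation bound $\bigl|\mathbb{E}_S[\tfrac{1}{n}\sum_i D_i(S)]\bigr| \le \gamma$ via the standard swap-an-example argument: letting $S^{(i)}$ denote $S$ with $\xi_i$ replaced by an independent copy $\xi'_i$, exchangeability gives $\mathbb{E}_S[\ell(A(S);\xi_i)] = \mathbb{E}_{S,\xi'_i}[\ell(A(S^{(i)});\xi'_i)]$, and uniform stability then yields $|\ell(A(S);\xi'_i) - \ell(A(S^{(i)});\xi'_i)|\le \gamma$. This pins down the expectation, but a naive McDiarmid bounded-differences inequality is far too crude: a single coordinate swap can shift every term $\ell(A(S);\xi_j)$ by $\gamma$ and therefore the full sum by $\gamma$, giving only the weaker $\mathcal{O}(\gamma\sqrt{n})$ concentration of Bousquet-Elisseeff.

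To beat this, I would pass to the moment method. The core technical step is to prove, by induction on $n$, an inequality of the schematic form
\[
\mathbb{E}_S\!\left[\Bigl|\tfrac{1}{n}\textstyle\sum_{i} D_i(S)\Bigr|^{p}\right] \;\le\; (C\gamma\, p\, \log n)^{p} \;+\; (C/\sqrt{n})^{p}\, p^{p/2}
\]
for every integer $p\ge 1$ (the constant hides the boundedness of $\ell$). The inductive step uses a dyadic range-reduction decomposition: partition $[n]$ into two halves $S_1, S_2$, split $\sum_i D_i(S) = \sum_{i\in S_1} D_i(S) + \sum_{i\in S_2} D_i(S)$, and compare each half-sum against its \emph{ghost} analogue obtained by resampling the opposite half. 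Uniform stability controls the cost of each such resample, and because the sample size halves at every level the recursion only contributes a $\log n$ factor rather than a polynomial one. The second term in the moment bound captures the residual bounded-loss martingale after this chaining and is handled by a sub-Gaussian/Azuma estimate.

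With the moment bound in hand, Markov's inequality applied at $p = \Theta(\log(1/\delta))$ yields
\[
\Bigl|\tfrac{1}{n}\textstyle\sum_i D_i(S)\Bigr| \;=\; \mathcal{O}\!\left(\gamma\, \log(n)\log(n/\delta) \,+\, \sqrt{\log(1/\delta)/n}\right)
\]
with probability at least $1-\delta$, which rearranges to the stated bound on $\mathbb{E}_\xi[\ell(A(S);\xi)]$ versus $\tfrac{1}{n}\sum_i \ell(A(S);\xi_i)$. The main obstacle is the inductive moment estimate: one must couple $A(S)$ to the algorithm trained on the resampled half carefully enough that the swap costs accumulate only multiplicatively in the depth $\log n$ of the recursion, rather than additively in the $\Theta(n)$ swaps that separate the two samples at the leaves. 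This delicate accounting of stability along the dyadic tree is precisely the component that distinguishes the Feldman-Vondrak bound from its classical predecessors, and it is where I would expect to spend most of the effort.
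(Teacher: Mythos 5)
The paper does not actually prove this lemma: it is imported verbatim from \cite[Theorem 1.1]{feldman2019high} (the sentence introducing it says exactly this), so there is no internal proof to compare yours against. Your sketch correctly reconstructs the strategy of that external proof: the observation that a naive McDiarmid argument only gives the $\mathcal{O}(\gamma\sqrt{n})$ Bousquet--Elisseeff rate, the swap/exchangeability argument pinning $\left|\mathbb{E}_S\left[\frac{1}{n}\sum_i D_i(S)\right]\right|$ to within $\gamma$, the dyadic ghost-sample range reduction that converts the $\Theta(n)$ swap costs into a $\log n$-depth recursion, and the final Markov step at $p=\Theta(\log(1/\delta))$ (which in fact yields the slightly sharper $\gamma\log(n)\log(1/\delta)$ dependence of the Bousquet--Klochkov--Zhivotovskiy refinement, and hence implies the stated bound).

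As a standalone proof, however, your write-up has a genuine gap: essentially all of the content of the result lives in the displayed moment inequality $\mathbb{E}_S\left[\left|\frac{1}{n}\sum_{i} D_i(S)\right|^{p}\right] \le (C\gamma p \log n)^{p} + (C/\sqrt{n})^{p} p^{p/2}$, which you state ``schematically'' and do not establish. The induction you allude to is precisely the delicate part: one must show that after conditioning on one half of the sample, the conditional means of the $D_i$ over the other half are themselves $\mathcal{O}(\gamma)$-close to recenterable quantities, so that each of the $\log n$ recursion levels contributes an additive $\mathcal{O}(\gamma p)$ to the $p$-th moment root rather than a multiplicative blow-up, and the residual centered sum at each level must be controlled by a Rosenthal/Marcinkiewicz--Zygmund-type moment estimate to produce the $p^{p/2}n^{-p/2}$ term. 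You correctly identify this as where the effort lies, but you do not carry it out, so the proposal is an accurate roadmap to the cited theorem rather than a proof of it. A minor point worth fixing: the $\sqrt{\log(1/\delta)/n}$ term requires the loss to be bounded, an assumption the lemma statement omits but which you rightly flag and which the paper supplies elsewhere through the hypothesis $0\le\ell(w;\xi)\le M$.
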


For a given index set $J\subseteq \{1,...,p\}$, we consider the following restrictive estimator over $J$:
\[
w_{S\mid J} = \argmin_{w\in \mathcal{W}, \supp(w)\subseteq J} F_S(w).
\]
The following result is about the generalization gap of $w_{S\mid J}$ at any fixed $J$ with $|J|=k$.
\begin{lemma}\label{lemma:support_stability}
Assume that the loss function $\ell$ is smooth and $G$-Lipschitz continuous with respect to its first argument and $0\le\ell(w;\xi)\le M$ for all $w,\xi$. Suppose that $F_S$ is $\mu_k$-strongly convex with probability at least $1- \delta'_n$ over the random draw of $S$. Then for any fixed index set $J$ with cardinality $k$ and $\delta\in(0,1-\delta'_n)$, and for any $\lambda>0$, the following bound holds with probability at least $1-\delta-\delta'_n$ over the random draw of $S$,
\[
F(w_{S\mid J}) - F_S(w_{S\mid J}) \le \mathcal{O}\left(\frac{G^2}{\lambda n}\log(n)\log(n/\delta) + \sqrt{\frac{\log(1/\delta)}{n}} + \frac{\lambda G \sqrt{M}}{\mu_k\sqrt{\mu_k}}\right).
\]
\end{lemma}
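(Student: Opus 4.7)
The plan is to follow the ``regularization proxy'' strategy outlined in the sketch for Proposition~\ref{thrm:uniform_stability}. Since $F_S$ is only restricted-strongly-convex with high probability (and not uniformly in $S$), the standard Bousquet--Elisseeff uniform stability bound cannot be applied directly to $w_{S\mid J}$. Instead, I will introduce the auxiliary $\ell_2$-regularized restricted estimator
\[
w_{\lambda,S\mid J} := \argmin_{w\in\mathcal{W},\,\supp(w)\subseteq J}\Bigl\{F_S(w)+\tfrac{\lambda}{2}\|w\|^2\Bigr\},
\]
prove a generalization bound for it deterministically via uniform stability, and then transfer that bound to $w_{S\mid J}$ by controlling $\|w_{\lambda,S\mid J}-w_{S\mid J}\|$ on the high-probability event that $F_S$ is $\mu_k$-strongly convex.

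First, regardless of any randomness, the regularized empirical risk $F_{\lambda,S}$ is $\lambda$-strongly convex and each sample-wise loss $\ell(\cdot;\xi)+\tfrac{\lambda}{2}\|\cdot\|^2$ is still $G$-Lipschitz in the argument differences that matter for the Bousquet--Elisseeff argument. A standard computation (restricting the optimization to $\{\supp(w)\subseteq J\}$ and comparing $w_{\lambda,S\mid J}$ with $w_{\lambda,S'\mid J}$ for neighbouring datasets $S,S'$) then yields a uniform stability of order $\gamma=2G^2/(\lambda n)$ with respect to the loss $\ell$. Plugging this into Lemma~\ref{lemma:uniform_stability} gives, with probability at least $1-\delta$ over $S$,
\[
F(w_{\lambda,S\mid J})-F_S(w_{\lambda,S\mid J})\le\mathcal{O}\!\left(\frac{G^2\log(n)\log(n/\delta)}{\lambda n}+\sqrt{\frac{\log(1/\delta)}{n}}\right).
\]

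Next, on the event that $F_S$ is $\mu_k$-strongly convex (probability $\ge 1-\delta'_n$), I will bound $\|w_{\lambda,S\mid J}-w_{S\mid J}\|$. Both iterates lie in the linear subspace $\{w:\supp(w)\subseteq J\}$, and on this $k$-dimensional subspace $F_{\lambda,S}$ is $(\mu_k+\lambda)$-strongly convex while $w_{S\mid J}$ satisfies the first-order condition $\nabla_J F_S(w_{S\mid J})=0$. Applying strong convexity of $F_{\lambda,S}$ between its minimizer $w_{\lambda,S\mid J}$ and $w_{S\mid J}$, and using the first-order identity $\nabla_J F_{\lambda,S}(w_{S\mid J})=\lambda[w_{S\mid J}]_J$, yields
\[
\|w_{\lambda,S\mid J}-w_{S\mid J}\|\le\frac{\lambda\|w_{S\mid J}\|}{\mu_k+\lambda}.
\]
To bound $\|w_{S\mid J}\|$ without a diameter assumption I will compare $F_S$ at $w_{S\mid J}$ and at $0$: both lie in the subspace, so $\nabla_J F_S(w_{S\mid J})=0$ combined with $\mu_k$-strong convexity gives $\tfrac{\mu_k}{2}\|w_{S\mid J}\|^2\le F_S(0)-F_S(w_{S\mid J})\le M$, hence $\|w_{S\mid J}\|\le\sqrt{2M/\mu_k}$. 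Altogether, on this event, $\|w_{\lambda,S\mid J}-w_{S\mid J}\|\le \lambda\sqrt{2M}/(\mu_k\sqrt{\mu_k})$.

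Finally, by $G$-Lipschitz continuity of $\ell$, both $F$ and $F_S$ are $G$-Lipschitz, so
\[
\bigl|F(w_{S\mid J})-F(w_{\lambda,S\mid J})\bigr|+\bigl|F_S(w_{S\mid J})-F_S(w_{\lambda,S\mid J})\bigr|\le\mathcal{O}\!\left(\frac{\lambda G\sqrt{M}}{\mu_k\sqrt{\mu_k}}\right).
\]
A union bound over the stability event (probability $\ge 1-\delta$) and the strong-convexity event (probability $\ge 1-\delta'_n$), combined with the triangle inequality, delivers the claimed bound with probability at least $1-\delta-\delta'_n$. The main obstacle is the middle step of controlling $\|w_{\lambda,S\mid J}-w_{S\mid J}\|$: it demands that the optimality conditions be read on the restricted subspace (not in $\mathbb{R}^p$), that $\mu_k$-strong convexity be leveraged despite holding only with high probability, and that $\|w_{S\mid J}\|$ be controlled intrinsically via $M$ rather than via a diameter of $\mathcal{W}$.
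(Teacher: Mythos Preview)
Your proposal is correct and follows essentially the same approach as the paper's proof: introduce the $\ell_2$-regularized proxy $w_{\lambda,S\mid J}$, establish its uniform stability of order $G^2/(\lambda n)$ and invoke Lemma~\ref{lemma:uniform_stability}, then on the high-probability strong-convexity event bound $\|w_{\lambda,S\mid J}-w_{S\mid J}\|$ via the restricted first-order conditions together with $\|w_{S\mid J}\|\le\mathcal{O}(\sqrt{M/\mu_k})$, and finally transfer the bound by Lipschitz continuity plus a union bound. The only differences from the paper are immaterial constants (the paper records stability $4G^2/(\lambda n)$ and uses $F_S(0)-F_S(w_{S\mid J})\le 2M$), which are absorbed by the $\mathcal{O}(\cdot)$ notation.
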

\begin{proof}
For any given $\lambda>0$, let us consider the following defined $\ell_2$-regularized $\ell_0$-ERM estimator:
\[
w_{\lambda,S\mid J} := \argmin_{w\in \mathcal{W}, \supp(w)\subseteq J} \left\{F_{\lambda,S}(w):= F_S(w) + \frac{\lambda}{2}\|w\|^2\right\}.
\]
The reason for introducing the additional $\ell_2$-regularization term is to guarantee uniform stability of the hypothetical estimator $w_{\lambda,S\mid J}$. Based on the standard argument (see, e.g.,~\cite{shalev2009stochastic}) we can show that the optimal model $w_{\lambda,S\mid J}$ has uniform stability $\gamma=\frac{4G^2}{\lambda n}$. Here we choose to provide the proof details in order to make our analysis self-contained. Let $S^{(i)}$ be a sample set that is identical to $S$ except that one of the $\xi_i$ is replaced by another random sample $\xi'_i$. Then we can show that
\[
\begin{aligned}
&F_{\lambda,S}(w_{\lambda, S^{(i)}\mid J}) - F_{\lambda,S}(w_{\lambda,S\mid J}) \\
=& \frac{1}{n}\sum_{j\neq i} \left(\ell(w_{\lambda, S^{(i)}\mid J};\xi_j) - \ell(w_{\lambda,S\mid J};\xi_j) \right) + \frac{1}{n} \left(\ell(w_{\lambda, S^{(i)}\mid J};\xi_i) - \ell( w_{\lambda,S\mid J};\xi_i)\right)\\
& + \frac{\lambda}{2}\|w_{\lambda, S^{(i)}\mid J}\|^2 -  \frac{\lambda}{2}\|w_{\lambda,S\mid J}\|^2\\
=& F_{\lambda,S^{(i)}}(w_{\lambda, S^{(i)}\mid J}) - F_{\lambda,S^{(i)}}(w_{\lambda,S\mid J}) + \frac{1}{n} \left(\ell(w_{\lambda, S^{(i)}\mid J};\xi_i) - \ell(w_{\lambda,S\mid J};\xi_i) \right) \\
& - \frac{1}{n} \left(\ell(w_{\lambda, S^{(i)}\mid J};\xi'_i) - \ell(w_{\lambda,S\mid J};\xi'_i) \right) \\
\le& \frac{1}{n} \left|\ell(w_{\lambda, S^{(i)}\mid J};\xi_i) - \ell(w_{\lambda,S\mid J};\xi_i) \right| + \frac{1}{n} \left|\ell(w_{\lambda, S^{(i)}\mid J};\xi'_i) - \ell(w_{\lambda,S\mid J};\xi'_i)\right| \\
\le& \frac{2G}{n} \|w_{\lambda, S^{(i)}\mid J} - w_{\lambda,S\mid J}\|,
\end{aligned}
\]
where we have used the optimality of $w_{\lambda, S^{(i)}\mid J}$ with respect to $F_{\lambda,S^{(i)}}(w)$ and the Lipschitz continuity of the loss function $\ell(w;\xi)$. Since $F_{\lambda,S}$ is $\lambda$-strongly convex and $w_{\lambda,S\mid J}$ is optimal for $F_{\lambda,S}(w)$ over the supporting set $J$, we have
\[
F_{\lambda,S}(w_{\lambda, S^{(i)}\mid J}) \ge F_{\lambda,S}(w_{\lambda,S\mid J}) + \frac{\lambda}{2}\|w_{\lambda, S^{(i)}\mid J} - w_{\lambda,S\mid J}\|^2.
\]
By combing the preceding two inequalities we arrive at
\[
\|w_{\lambda, S^{(i)}\mid J} - w_{\lambda,S\mid J}\| \le \frac{4G}{\lambda n}.
\]
Consequently from the Lipschitz continuity of $\ell$ we have that for any sample $\xi$
\[
|\ell(w_{\lambda, S^{(i)}\mid J};\xi ) - \ell(w_{\lambda,S\mid J};\xi) | \le G\|w_{\lambda,S\mid J}^{(i)} - w_{\lambda,S\mid J}\| \le \frac{4G^2}{\lambda n}.
\]
This confirms that the optimal model $w_{\lambda,S\mid J}$ has uniform stability $\gamma=\frac{4G^2}{\lambda n}$. By invoking Lemma~\ref{lemma:uniform_stability} we obtain that with probability at least $1-\delta$ over random draw of $S$,
\begin{equation}\label{inequat:lemma_support_stability_key_1}
F(w_{\lambda, S\mid J}) - F_S(w_{\lambda, S \mid J}) \le \mathcal{O}\left(\frac{G^2}{\lambda n}\log(n)\log(n/\delta) + \sqrt{\frac{\log(1/\delta)}{n}}\right).
\end{equation}
Next, we show how to bound the estimator difference $\|w_{S\mid J} - w_{\lambda, S\mid J}\|$. The strong convexity assumption of $F_S$ implies that  the following bound holds with probability at least $1-\delta'_n$ over $S$:
\[
\|\nabla_J F_{\lambda, S}(w_{S\mid J}) - \nabla_J F_{\lambda, S}(w_{\lambda, S\mid J})\| \ge (\mu_k+\lambda)\|w_{S\mid J} - w_{\lambda, S\mid J}\|,
\]
where the notation $\nabla_J F$ denotes the restriction of gradient $\nabla F$ over $J$. The optimality of $w_{\lambda,S\mid J}$ and $w_{S\mid J}$ over $J$ implies that
\[
\nabla_J F_{\lambda, S}(w_{\lambda, S\mid J})=0, \quad \nabla_J F_{\lambda, S}(w_{S\mid J}) = \nabla_J F_S(w_{S\mid J}) + \lambda w_{S\mid J}=\lambda w_{S\mid J}.
\]
In the meanwhile, since $\ell(\cdot;\cdot)\le M$, we must have the following holds with probability at least $1-\delta'_n$ over $S$:
\[
2M \ge F_S(0) - F_S(w_{S\mid J}) \ge \frac{\mu_{k}}{2}\|w_{S\mid J}\|^2,
\]
which leads to $\|w_{S\mid J}\|\le 2\sqrt{M/\mu_{k}}$. Then it follows readily from the previous two bounds that
\[
\|w_{S\mid J} - w_{\lambda, S\mid J}\| \le \frac{\lambda}{\mu_k + \lambda}\|w_{S \mid J}\| \le \frac{2\lambda \sqrt{M}}{\sqrt{\mu_k}(\mu_k + \lambda)}\le \frac{2\lambda \sqrt{M}}{\mu_k\sqrt{\mu_k}}.
\]
Since the loss function is $G$-Lipschitz continuous, the following is then valid with probability at least $1-\delta'_n$ over the random draw of $S$:
\[
\begin{aligned}
& F(w_{S\mid J}) - F_S(w_{S \mid J}) \\
\le & F(w_{\lambda, S\mid J}) - F_S(w_{\lambda, S \mid J}) + |F_S(w_{S\mid J})-F_S(w_{\lambda, S\mid J})| + |F(w_{S\mid J})-F(w_{\lambda, S\mid J})| \\
\le& F(w_{\lambda, S\mid J}) - F_S(w_{\lambda, S \mid J}) + 2G\|w_{S\mid J} - w_{\lambda, S\mid J}\| \\
\le& F(w_{\lambda, S\mid J}) - F_S(w_{\lambda, S \mid J}) + \frac{4\lambda G \sqrt{M}}{\mu_k\sqrt{\mu_k}}.
\end{aligned}
\]
In view of the above bound and the bound in~\eqref{inequat:lemma_support_stability_key_1} we get that for any fixed index set $J$, with probability at least $1-\delta-\delta'_n$ over $S$,
\[
F(w_{S\mid J}) - F_S(w_{S \mid J}) \le \mathcal{O}\left(\frac{G^2}{\lambda n}\log(n)\log(n/\delta) + \sqrt{\frac{\log(1/\delta)}{n}} + \frac{\lambda G \sqrt{M}}{\mu_k\sqrt{\mu_k}}\right).
\]
The proof is concluded.
\end{proof}
Now we are in the position to prove the main theorem.
\begin{proof}[Proof of Proposition~\ref{thrm:uniform_stability}]
Let $\mathcal{J}=\{J \subseteq \{1,...,p\}: |J|= k\}$ be the set of index set of cardinality $k$. It is standard to know $|\mathcal{J}|=\binom{p}{k}\le \left(\frac{ep}{k}\right)^k$ (see, e.g.,~\cite[Lemma 2.7]{rigollet201518}). For any random sample set $S$, from the optimality of $w_{S,k}$ we always have $w_{S,k} \in \{w_{S\mid J}: J\in \mathcal{J}\}$. By assumption $\delta'_n \le \frac{\delta}{2|\mathcal{J}|}$. For each $J\in \mathcal{J}$, by applying Lemma~\ref{lemma:support_stability} we can show that with probability at least $1-\frac{\delta}{|\mathcal{J}|}$ over $S$, the generalization gap satisfies $F(w_{S\mid J})- F_S(w_{S\mid J})\le$
\[
 \mathcal{O}\left( \frac{G^2}{\lambda n}\log(n)(\log(n/\delta)+\log(|\mathcal{J}|))+\sqrt{\frac{\log(1/\delta)+ \log(|\mathcal{J}|)}{n}} + \frac{\lambda G \sqrt{M}}{\mu_k\sqrt{\mu_k}} \right).
\]
Then by union probability we get that with probability at least $1-\delta$, $F(w_{S,k})- F_S(w_{S,k}) \le $
\[
\mathcal{O}\left( \frac{G^2}{\lambda n}\log(n)(\log(n/\delta)+k\log(p/k))+\sqrt{\frac{\log(1/\delta)+ k\log(p/k)}{n}}+ \frac{\lambda G \sqrt{M}}{\mu_k\sqrt{\mu_k}}\right).
\]
By setting $\lambda=\sqrt{\frac{\mu_k^{1.5}\log(n)(\log(n/\delta)+k\log(p/k))}{nM^{0.5}}}$ we obtain the first desired bound.

To prove the excess risk bound, we bound $F(w_{S,k}) - F(\bar w)$ as
\[
\begin{aligned}
F(w_{S,k}) - F(\bar w) =& F(w_{S,k}) - F_S(w_{S,k}) + F_S(w_{S,k}) -F_S(\bar w) + F_S(\bar w) - F(\bar w) \\
\le& F(w_{S,k}) - F_S(w_{S,k}) + F_S(\bar w) - F(\bar w) ,
\end{aligned}
\]
where we have used $F_S(w_{S,k}) \le F_S(\bar w) $. Since $\ell(w;\xi)\le M$, from Hoeffding inequality we know that with probability at least $1-\delta/2$,
\[
F_S(\bar w) - F(\bar w) \le M\sqrt{\frac{\log(2/\delta)}{2n}}.
\]
Based on the previous generalization gap bound and by union probability we get with probability at least $1-\delta$
\[
\begin{aligned}
&F(w_{S,k}) - F(\bar w) \le F(w_{S,k}) - F_S(w_{S,k}) + F_S(\bar w) - F(\bar w) \le\\
&  \mathcal{O}\left(\frac{G^{3/2}M^{1/4}}{\mu_k^{3/4}}\sqrt{\frac{\log(n)(\log(n/\delta)+k\log(p/k))}{n}} + M\sqrt{\frac{\log(1/\delta)}{n}} \right).
\end{aligned}
\]
This completes the proof.
\end{proof}

\subsection{Proof of Theorem~\ref{thrm:generalizaion_support_recovery_high}}
\label{apdsect:proof_theorem_generalizaion_support_recovery_high}

In this subsection, we prove Theorem~\ref{thrm:generalizaion_support_recovery_high} as restated below.
\ERMBlackBoxBoundsUniformSupport*

To prove the main result in the theorem, we first need to prove the following lemma which basically provides a sufficient condition to guarantee the support recovery stability of $\ell_0$-ERM.
\begin{lemma}\label{lemma:support_recovery_bar_w}
Suppose that $F_S$ is $\mu_{2k}$-strongly convex with probability at least $1- \delta'_n$. Assume that $\|\nabla \ell(w;\cdot)\|\le G$. Suppose that there exists a $k$-sparse vector $\tilde w$ such that
\[
\tilde w_{\min}> \frac{2\sqrt{2k}\|\nabla F(\tilde w)\|_\infty}{\mu_{2k}} + \frac{2G}{\mu_{2k}} \sqrt{\frac{2k\log(p/\delta)}{2n}}
\]
for some $\delta\in (0, 1-\delta'_n)$. Then the support recovery $\supp(w_{S,k}) = \supp(\tilde w)$ holds with probability at least $1-\delta - \delta'_n$.
\end{lemma}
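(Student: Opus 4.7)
The high-level strategy is to reduce the support recovery claim to the ``moreover'' clause of Lemma~\ref{lemma:estimation_error} applied with $f=F_S$, $w=w_{S,k}$, $w'=\tilde w$, and $s=2k$. Since $\tilde w$ is $k$-sparse and $w_{S,k}$ is the $\ell_0$-ERM minimizer over $k$-sparse vectors, we automatically get $\|w_{S,k}-\tilde w\|_0\le 2k$ and $F_S(w_{S,k})\le F_S(\tilde w)$, so the slack parameter in Lemma~\ref{lemma:estimation_error} is $\epsilon=0$. Conditioned on the event $\mathcal{E}_1$ that $F_S$ is $\mu_{2k}$-strongly convex (probability $\ge 1-\delta'_n$ by assumption), the lemma yields
\[
\supp(\tilde w)\subseteq \supp(w_{S,k}) \quad \text{whenever} \quad \tilde w_{\min}\,>\,\frac{2\sqrt{2k}\,\|\nabla F_S(\tilde w)\|_\infty}{\mu_{2k}}.
\]
Hence it suffices to upgrade the assumed lower bound, which involves the population gradient $\nabla F(\tilde w)$, to one on the empirical gradient $\nabla F_S(\tilde w)$.

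\textbf{Concentration step.} The second thing I would do is control $\|\nabla F_S(\tilde w) - \nabla F(\tilde w)\|_\infty$ via a Hoeffding bound. Because $\|\nabla\ell(w;\xi)\|\le G$ holds by hypothesis, each coordinate $\nabla_j\ell(\tilde w;\xi_i)$ is an i.i.d. sample with $|\nabla_j\ell(\tilde w;\xi_i)|\le G$ and mean $\nabla_j F(\tilde w)$. Applying Hoeffding's inequality to the centered sample mean at every fixed coordinate and union-bounding over $j\in\{1,\dots,p\}$ gives an event $\mathcal{E}_2$ of probability at least $1-\delta$ on which
\[
\|\nabla F_S(\tilde w) - \nabla F(\tilde w)\|_\infty \;\le\; G\sqrt{\tfrac{2\log(2p/\delta)}{n}},
\]
so by the triangle inequality
\[
\|\nabla F_S(\tilde w)\|_\infty \;\le\; \|\nabla F(\tilde w)\|_\infty + G\sqrt{\tfrac{2\log(2p/\delta)}{n}}.
\]

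\textbf{Conclusion.} On $\mathcal{E}_1\cap\mathcal{E}_2$, which has probability at least $1-\delta-\delta'_n$, plugging the above bound into the required threshold $\tfrac{2\sqrt{2k}\,\|\nabla F_S(\tilde w)\|_\infty}{\mu_{2k}}$ and invoking the hypothesis on $\tilde w_{\min}$ verifies the condition in Lemma~\ref{lemma:estimation_error} (up to the implicit matching of absolute constants in the $\sqrt{k\log(p/\delta)/n}$ term, which is where a slightly sharper one-sided sub-Gaussian tail may be used). This gives $\supp(\tilde w)\subseteq \supp(w_{S,k})$. Since $|\supp(\tilde w)|=k$ and $|\supp(w_{S,k})|\le k$, containment forces equality, completing the proof.

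\textbf{Main obstacle.} The argument is quite direct; the only subtlety is book-keeping the failure probabilities of the two events (strong convexity and gradient concentration) so that the final union bound delivers exactly $1-\delta-\delta'_n$, and making sure the concentration constant lines up with the factor stated in the lemma. There is no need to control $w_{S,k}$ directly or to invoke any uniform stability or covering number machinery at this stage.
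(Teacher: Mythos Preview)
Your proposal is correct and follows essentially the same route as the paper's proof: both reduce to the ``moreover'' clause of Lemma~\ref{lemma:estimation_error} with $w=w_{S,k}$, $w'=\tilde w$, $\epsilon=0$, then use a Hoeffding-type bound to pass from $\|\nabla F(\tilde w)\|_\infty$ to $\|\nabla F_S(\tilde w)\|_\infty$, and finish with the union bound over the two failure events. The only cosmetic difference is that the paper bounds $\|\nabla F_S(\tilde w)-\nabla F(\tilde w)\|$ (the full $\ell_2$-norm) and then uses $\|\cdot\|_\infty\le\|\cdot\|$, whereas you apply Hoeffding coordinate-wise and union-bound directly on the $\ell_\infty$-norm; either route matches the constants in the lemma statement up to the minor bookkeeping you already flagged.
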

\begin{proof}
Let us consider a fixed $\tilde w$. Since $\|\nabla \ell(w;\cdot)\|\le G$, from Hoeffding concentration bound we know that with probability at least $1-\delta$ over $S$,
\[
\|\nabla F_S(\tilde w) - \nabla F(\tilde w)\| \le G \sqrt{\frac{\log(p/\delta)}{2n}}.
\]
Then with probability at least $1-\delta$,
\[
\begin{aligned}
&\|\nabla F_S(\tilde w)\|_\infty \le \|\nabla F(\tilde w)\|_\infty + \|\nabla F_S(\tilde w) - \nabla F(\tilde w)\|_\infty \\
\le& \|\nabla F(\tilde w)\|_\infty + \|\nabla F_S(\tilde w) - \nabla F(\tilde w)\|\le \|\nabla F(\tilde w)\|_\infty + G \sqrt{\frac{\log(1/\delta)}{2n}}.
\end{aligned}
\]
Consequently from the condition in the theorem we can show that with probability at least $1-\delta$,
\[
\begin{aligned}
\tilde w_{\min} >& \frac{2\sqrt{2k}\|\nabla F(\tilde w)\|_\infty}{\mu_{2k}} + \frac{2G}{\mu_{2k}} \sqrt{\frac{2k\log(1/\delta)}{2n}}\\
\ge& \frac{2\sqrt{2k}\|\nabla F_S(\tilde w)\|_\infty}{\mu_{2k}} - \frac{2G}{\mu_{2k}} \sqrt{\frac{2k\log(1/\delta)}{2n}} + \frac{2G}{\mu_{2k}} \sqrt{\frac{2k\log(1/\delta)}{2n}}\\
=& \frac{2\sqrt{2k}\|\nabla F_S(\tilde w)\|_\infty}{\mu_{2k}}.
\end{aligned}
\]
Finally, since $F_S(w_{S,k}) \le F_S(\tilde w)$ and $F_S$ is $\mu_{2k}$-strongly convex with probability at least $1- \delta'_n$, by invoking Lemma~\ref{lemma:estimation_error} with $w=w_{S,k}, w'=\tilde w$ and $\epsilon = 0$, and using union probability argument we get that $\supp(w_{S,k}) = \supp(\tilde w)$ (note that $w_{S,k}$ and $\tilde w$ are both $k$-sparse vectors) holds with probability at least $1-\delta-\delta'_n$.
\end{proof}
\begin{remark}
The main message conveyed by this lemma is that with additional conditions imposed on the signal strength of certain underlying sparse vector $\tilde w$, the supporting set recovered by $\ell_0$-ERM is exactly that of $\tilde w$ with high probability. This simple result essentially guarantees the stability of support recovery for $\ell_0$-ERM.
\end{remark}

Now we are ready to prove the main result.
\begin{proof}[Proof of Theorem~\ref{thrm:generalizaion_support_recovery_high}]
\emph{Part(a):} Let us consider $S^{(i)}$ which is identical to $S$ except that one of the $\xi_i$ is replaced by another random sample $\xi'_i$. Then we can show that
\[
\begin{aligned}
&F_{S}(w_{S^{(i)},k}) - F_{S}(w_{S,k}) \\
=& \frac{1}{n}\sum_{j\neq i} \left(\ell(w_{S^{(i)},k};\xi_j) - \ell(w_{S,k};\xi_j) \right) + \frac{1}{n} \left(\ell(w_{S^{(i)},k};\xi_i) - \ell( w_{S,k};\xi_i)\right) \\
=& F_{S^{(i)}}(w_{S^{(i)},k}) - F_{S^{(i)}}(w_{S,k}) + \frac{1}{n} \left(\ell(w_{S^{(i)},k};\xi_i) - \ell(w_{S,k};\xi_i) \right) \\
&- \frac{1}{n} \left(\ell(w_{S^{(i)},k};\xi'_i) - \ell(w_{S,k};\xi'_i) \right) \\
\le& \frac{1}{n} \left|\ell(w_{S^{(i)},k};\xi_i) - \ell(w_{S,k};\xi_i) \right| + \frac{1}{n} \left|\ell(w_{S^{(i)},k};\xi'_i) - \ell(w_{S,k};\xi'_i)\right| \\
\le& \frac{2G}{n} \|w_{S^{(i)},k} - w_{S,k}\|,
\end{aligned}
\]
where we have used the optimality of $w_{S^{(i)},k}$ with respect to $F_{S^{(i)}}(w)$ and the Lipschitz continuity of the loss function $\ell$ with respect to its first argument. Note that $S$ and $S^{(i)}$ are both i.i.d. samples. Then based on the condition on $\tilde w_{\min}$ and Lemma~\ref{lemma:support_recovery_bar_w} we know that $\supp(w_{S,k}) = \supp(\tilde w)$ and $\supp(w_{S^{(i)},k}) = \supp(\tilde w)$ hold (separately) with probability at least $1-\delta/2-\delta'_n$. Therefore the following event occurs with probability at least $1-\delta-2\delta'_n$ over $\{S, \xi'_i\}$:
\[
\text{$F_S$ is $\mu_{2k}$-strongly convex}, \ \ \ \supp(w_{S,k}) = \supp(w_{S^{(i)},k}).
\]
Now we assume that the above event occurs. Since $w_{S,k}$ is optimal for $F_{S}(w)$ over the supporting set of $S$, we have
\[
F_{S}(w_{S^{(i)},k}) \ge F_{S}(w_{S,k}) + \frac{\mu_{2k}}{2}\|w_{S^{(i)},k} - w_{S,k}\|^2.
\]
By combing the preceding two inequalities we arrive at
\[
\|w_{S^{(i)},k} - w_{S,k}\| \le \frac{4G}{\mu_{2k} n}.
\]
Let us consider a random variable defined by
\[
\gamma^{(i)}_{S}(\xi_i):=  |\ell(w_{S^{(i)},k};\xi_i) - \ell(w_{S,k};\xi_i)|
\]
Consequently from the Lipschitz continuity of the loss function $\ell$ we have that the following holds with probability at least $1-\delta-2\delta'_n$ over $\{S, \xi'_i\}$:
\[
\gamma^{(i)}_{S}(\xi_i) = |\ell(w_{S^{(i)},k};\xi_i) - \ell(w_{S,k};\xi_i) | \le G\|w_{S^{(i)},k} - w_{S,k}\| \le \frac{4G^2}{\mu_{2k} n}.
\]
In the meanwhile, the bounding assumption on $\ell$ implies
\[
\gamma^{(i)}_{S}(\xi_i) \le |\ell(w_{S^{(i)},k};\xi_i)| + |\ell(w_{S,k};\xi_i) | \le 2M.
\]
Let us consider a random indication variable $Y^{(i)}_S(\xi_i)$ defined by $Y^{(i)}_S(\xi_i)=1$ if $\gamma^{(i)}_{S}(\xi_i)\le \frac{4G^2}{\mu_{2k} n}$, and $Y^{(i)}_S(\xi_i)=0$ otherwise. Then we can bound the expected value of $\gamma^{(i)}_S(\xi_i)$ as follows:
\begin{equation}\label{inequat:uniform_strong_thrm_key1}
\begin{aligned}
&\mathbb{E}_{S\cup \{\xi'_i\}}\left[\gamma^{(i)}_S(\xi_i)\right] = \mathbb{E}_{Y^{(i)}_S(\xi_i)}\mathbb{E}_{S\cup \{\xi'_i\}}\left[\gamma^{(i)}_S(\xi_i)\mid Y^{(i)}_S(\xi_i)\right] \\
=& \mathbb{E}_{S\cup \{\xi'_i\}}\left[\gamma^{(i)}_S(\xi_i)\mid Y^{(i)}_S(\xi_i)=1 \right] \mathbb{P}\left(Y^{(i)}_S(\xi_i)=1 \right) \\
& + \mathbb{E}_{S\cup \{\xi'_i\}}\left[\gamma^{(i)}_S(\xi_i)\mid Y^{(i)}_S(\xi_i)=0 \right] \mathbb{P}\left(Y^{(i)}_S(\xi_i)=0 \right) \\
\le& \frac{4G^2}{\mu_{2k} n} + 2M \mathbb{P}\left(Y^{(i)}_S(\xi_i)=0 \right) \le \frac{4G^2}{\mu_{2k} n} + 2M(\delta + 2\delta'_n),
\end{aligned}
\end{equation}
where in the last inequality we have used $\mathbb{P}\left(Y^{(i)}_S(\xi)=0 \right)\le \delta + 2\delta'_n$.

Note again that $S$ and $S^{(i)}$ are both i.i.d. samples of the data distribution $D$. It follows that
\[
\mathbb{E}_{S}\left[ F(w_{S,k})\right] = \mathbb{E}_{S^{(i)}}\left[ F(w_{S^{(i)},k})\right] = \mathbb{E}_{S^{(i)}\cup \{\xi_i\}}\left[ \ell(w_{S^{(i)},k}; \xi_i)\right].
\]
Since the above holds for all $i=1,...,n$, we can show that
\[
\mathbb{E}_{S}\left[ F(w_{S,k}) \right] = \frac{1}{n}\sum_{i=1}^n \mathbb{E}_{S^{(i)}\cup \{\xi_i\}}\left[ \ell(w_{S^{(i)},k};\xi_i)\right] = \frac{1}{n}\sum_{i=1}^n \mathbb{E}_{S\cup \{\xi'_i\}}\left[ \ell(w_{S^{(i)},k};\xi_i)\right].
\]
Concerning the empirical case, we can see that
\[
\mathbb{E}_{S}\left[ F_S(w_{S,k})\right] =  \frac{1}{n}\sum_{i=1}^n \mathbb{E}_S\left[ \ell(w_{S,k};\xi_i)\right]  = \frac{1}{n}\sum_{i=1}^n \mathbb{E}_{S\cup \{\xi'_i\}}\left[ \ell(w_{S,k};\xi_i)\right].
\]
By combining the above two inequalities we get
\[
\begin{aligned}
&\left|\mathbb{E}_{S}\left[ F(w_{S,k}) - F_S(w_{S,k}) \right] \right| = \left| \frac{1}{n}\sum_{i=1}^n \mathbb{E}_{S\cup \{\xi'_i\}} \left[\ell(w_{S^{(i)},k}; \xi_i) - \ell(w_{S,k}; \xi_i)\right] \right| \\
\le& \frac{1}{n}\sum_{i=1}^n \mathbb{E}_{S\cup \{\xi'_i\}} \left[\left|\ell(w_{S^{(i)},k};\xi_i) - \ell(w_{S,k}; \xi_i)\right|\right] = \frac{1}{n}\sum_{i=1}^n \mathbb{E}_{S\cup \{\xi'_i\}} \left[\gamma^{(i)}_S(\xi_i)\right] \\
\le& \frac{4G^2}{\mu_{2k} n} + 2M(\delta + 2\delta'_n),
\end{aligned}
\]
where in the last inequality we have used~\eqref{inequat:uniform_strong_thrm_key1}.

In order to prove the excess risk bound, we bound $F(w_{S,k}) - F(\bar w)$ as
\[
\begin{aligned}
F(w_{S,k}) - F(\bar w) =& F(w_{S,k}) - F_S(w_{S,k}) + F_S(w_{S,k}) -F_S(\bar w) + F_S(\bar w) - F(\bar w) \\
\le& F(w_{S,k}) - F_S(w_{S,k}) + F_S(\bar w) - F(\bar w),
\end{aligned}
\]
where we have used $F_S(w_{S,k}) \le F_S(\bar w)$. Thus
\[
\begin{aligned}
\mathbb{E}_{S}\left[F(w_{S,k}) - F(\bar w)\right] \le & \mathbb{E}_{S}\left[F(w_{S,k}) - F_S(w_{S,k}) + F_S(\bar w) - F(\bar w)\right] \\
=& \mathbb{E}_{S}\left[F(w_{S,k}) - F_S(w_{S,k}) \right] \le \frac{4G^2}{\mu_{2k} n} + 2M(\delta + 2\delta'_n).
\end{aligned}
\]
This completes the proof of part(a) .

\emph{Part(b):} To ease notation, we abbreviate
\[
C_{\lambda,n}:=\frac{G^2}{\lambda n}\log(n)\log(n/\delta) + \sqrt{\frac{\log(1/\delta)}{n}} + \frac{\lambda G \sqrt{M}}{\mu_k\sqrt{\mu_k}}.
\]
Denote $\tilde J=\supp(\tilde w)$ and $w_{S\mid \tilde J} = \argmin_{w\in \mathcal{W}, \supp(w)\subseteq \tilde J} F_S(w)$. Let us define the following three events associated with the sample set $S$:
\[
\mathcal{E}_1:\left\{S \in \mathcal{X}^n: F(w_{S,k})  - F_S(w_{S,k})\le \mathcal{O}\left(C_{\lambda,n}\right)\right\},
\]
\[
\mathcal{E}_2:\left\{S \in \mathcal{X}^n: F(w_{S\mid \tilde J})  - F_S(w_{S\mid \tilde J})\le \mathcal{O}\left(C_{\lambda,n}\right), \right\},
\]
and
\[
\mathcal{E}_3:=\left\{S \in \mathcal{X}^n: \supp(w_{S,k}) = \supp(\tilde w)\right\}.
\]
It is easy to verify that $\mathcal{E}_1 \cap \mathcal{E}_3 \supseteq \mathcal{E}_2 \cap \mathcal{E}_3$. Indeed, for any $S\in \mathcal{E}_2 \cap \mathcal{E}_3$, it follows immediately that $S\in \mathcal{E}_1$ and thus $S\in \mathcal{E}_1 \cap \mathcal{E}_3$. Since the loss function $\ell$ is differentiable and $G$-Lipschitz continuous with its first argument, we have that $\|\nabla \ell(w;\cdot)\|\le G$. Then based on the condition on $\bar w_{\min}$ and Lemma~\ref{lemma:support_recovery_bar_w} we can show that the following probability bound holds:
\[
\mathbb{P}\left(\mathcal{E}_3 \right) \ge 1 - \delta/2 - \delta'_n.
\]
In the meanwhile, by invoking Lemma~\ref{lemma:support_stability} over the supporting set $\supp(\bar w)$ we obtain that
\[
\mathbb{P}\left(\mathcal{E}_2 \right) \ge 1 - \delta/2 .
\]
Then, we can derive
\[
\mathbb{P}(\mathcal{E}_1) \ge\mathbb{P}(\mathcal{E}_1 \cap \mathcal{E}_3) \ge \mathbb{P}(\mathcal{E}_2 \cap \mathcal{E}_3) \ge 1 - \mathbb{P}(\overline{\mathcal{E}}_2) - \mathbb{P}(\overline{\mathcal{E}}_3) \ge 1 - \delta - \delta_n'.
\]
The desired generalization gap bound follows by setting $\lambda=\sqrt{\frac{\mu_{2k}^{1.5}\log(n)\log(n/\delta)}{n\sqrt{M}}}$ in $C_{\lambda,n}$.

To prove the excess risk bound, we bound $F(w_{S,k}) - F(\bar w)$ as
\[
\begin{aligned}
F(w_{S,k}) - F(\bar w) =& F(w_{S,k}) - F_S(w_{S,k}) + F_S(w_{S,k}) -F_S(\bar w) + F_S(\bar w) - F(\bar w) \\
\le& F(w_{S,k}) - F_S(w_{S,k}) + F_S(\bar w) - F(\bar w),
\end{aligned}
\]
where we have used $F_S(w_{S,k}) \le F_S(\bar w)$. Since $\ell(w;\xi)\le M$, from Hoeffding inequality we know that with probability at least $1-\delta/2$,
\[
F_S(\bar w) - F(\bar w) \le M\sqrt{\frac{\log(2/\delta)}{2n}}.
\]
The established generalization gap bound suggests that the following bound holds with probability at least $1-\delta/2-\delta'_n$
\[
F(w_{S,k}) - F_S(w_{S,k}) \le  \mathcal{O}\left(C_{\lambda,n}\right).
\]
By union probability we get with probability at least $1-\delta-\delta'_n$
\[
F(w_{S,k}) - F(\bar w) \le F(w_{S,k}) - F_S(w_{S,k}) + F_S(\bar w) - F(\bar w) \le \mathcal{O}\left(C_{\lambda,n} +  M\sqrt{\frac{\log(1/\delta)}{n}} \right).
\]
This concludes the proof.
\end{proof}

\section{Proof of the Generalization Results for the IHT Algorithm}
\label{append:proof_iht}

\subsection{Proof of Corollary~\ref{corol:generalization_barw_iht}}
\label{apdsect:proof_generalization_barw_iht}

In this subsection, we prove Corollary~\ref{corol:generalization_barw_iht} as restated below.
\IHTWhiteBoxBounds*
\begin{proof}
The proof largely mimics that of Theorem~\ref{thrm:generalization_barw}, with proper adaptation to the excess risk bounds of IHT. Here we still present the full proof for completeness purpose. From Lemma~\ref{lemma:sub_Gaussian_bounds} we know that it holds with probability at least $1-\delta$ that
\[
\|\nabla F_S(\bar w)\|_\infty \le \sigma\sqrt{\frac{2\log(p/\delta)}{n}}.
\]
Since by assumption $F_S(w)$ is $L$-strongly smooth and $\mu_{3k}$-strongly convex with probability at least $1- \delta'_n$, Lemma~\ref{lemma:convergence_iht} then shows that $F(w^{(t)}_{S,k}) - F_S(\bar w) \le \epsilon$ with probability at least $1- \delta'_n$ provided that $t\ge\mathcal{O}(L/\mu_{3k}\log(1/\epsilon))$. By invoking Lemma~\ref{lemma:estimation_error} we obtain that with probability at least $1- \delta'_n$,
\[
\|w^{(t)}_{S,k} - \bar w\|^2 \le \frac{16k\|\nabla F_S(\bar w)\|^2_\infty}{\mu_{2k}^2} + \frac{4\epsilon}{\mu_{2k}}\le \frac{16k\|\nabla F_S(\bar w)\|^2_\infty}{\mu_{3k}^2} + \frac{4\epsilon}{\mu_{3k}}.
\]
Based on the smoothness of $F$ and $\nabla F(\bar w)=0$, by union probability we get that with probability at least $1-\delta - \delta'_n$
\[
\begin{aligned}
&F(w^{(t)}_{S,k}) - F(\bar w) \\
\le& \frac{L}{2}\|w^{(t)}_{S,k} - \bar w\|^2 \le \frac{8kL\|\nabla F_S(\bar w)\|^2_\infty}{\mu_{3k}^2} + \frac{2L\epsilon}{\mu_{3k}} \le \frac{16L}{\mu_{3k}^2}\left(\frac{k\sigma^2\log (p/\delta)}{n}\right) + \frac{2L\epsilon}{\mu_{3k}}.
\end{aligned}
\]
By setting $\epsilon = \frac{1}{\mu_{3k}}\left(\frac{k\sigma^2\log (p/\delta)}{n}\right)$ we obtain the desired high probability bound for excess risk.

Next, we bound the generalization gap in expectation. Let us consider the following decomposition of generalization gap:
\[
F(w^{(t)}_{S,k}) - F_S(w^{(t)}_{S,k}) = \underbrace{F(w^{(t)}_{S,k}) - F(\bar w)}_{A_1} + \underbrace{F(\bar w) - F_S(\bar w)}_{A_2} + \underbrace{F_S(\bar w) - F_S(w^{(t)}_{S,k})}_{A_3}.
\]
We first bound the term $A_1$. From the smoothness of $\ell(\cdot,\cdot)$ and $\nabla F(\bar w) =0$ we get
\[
\begin{aligned}
\left|F(w^{(t)}_{S,k}) - F(\bar w)\right| \le& \left|\langle \nabla F(\bar w), w^{(t)}_{S,k} - \bar w  \rangle\right| + \frac{L}{2}\|w^{(t)}_{S,k} - \bar w\|^2 = \frac{L}{2}\|w^{(t)}_{S,k} - \bar w\|^2.
\end{aligned}
\]
Similarly we can bound the term $A_3$. Indeed, let $\bar I = \supp(\bar w)$ and $I^{(t)}_S=\supp(w^{(t)}_{S,k})$. Again, from the smoothness of $\ell(w,\cdot)$ we get
\[
\begin{aligned}
&\left|F_S(\bar w) - F_S(w^{(t)}_{S,k})\right|\\
=& \left|F_S(w^{(t)}_{S,k}) - F_S(\bar w) \right|\le \left|\langle \nabla F_S(\bar w), w^{(t)}_{S,k} - \bar w  \rangle\right| + \frac{L}{2}\|w^{(t)}_{S,k} - \bar w\|^2 \\
\le& \|\nabla_{\bar I \cup I^{(t)}_S} F_S(\bar w)\|\|w^{(t)}_{S,k} - \bar w\| + \frac{L}{2}\|w^{(t)}_{S,k} - \bar w\|^2 \\
\le& \sqrt{2k}\|\nabla F_S(\bar w)\|_\infty \|\|w^{(t)}_{S,k} - \bar w\| + \frac{L}{2}\|w^{(t)}_{S,k} - \bar w\|^2 \\
\le& \frac{k}{L_{2k}}\|\nabla F_S(\bar w)\|^2_\infty + L \|w^{(t)}_{S,k} - \bar w\|^2.
\end{aligned}
\]
From Lemma~\ref{lemma:sub_Gaussian_bounds} we have $\mathbb{E}_S\left[\|\nabla F_S(\bar w)\|^2_\infty\right]\le \delta_n$. Therefore,
\[
\begin{aligned}
&\left|\mathbb{E}_S[F(w^{(t)}_{S,k}) - F_S(w^{(t)}_{S,k})]\right| \\
=& \left|\mathbb{E}_S[F(w^{(t)}_{S,k}) - F(\bar w)] + \mathbb{E}_S[F(\bar w) - F_S(\bar w) ] + \mathbb{E}_S[F_S(\bar w) - F_S(w^{(t)}_{S,k})]\right| \\
\overset{\zeta_1}{=}& \left|\mathbb{E}_S[F(w^{(t)}_{S,k}) - F(\bar w)] + \mathbb{E}_S[F_S(\bar w) - F_S(w^{(t)}_{S,k})]\right| \\
\le& \mathbb{E}_S\left[\left|F(w^{(t)}_{S,k}) - F(\bar w)\right|\right] + \mathbb{E}_S\left[\left|F_S(\bar w) - F_S(w^{(t)}_{S,k})\right|\right] \\
\le& \frac{L}{2}\mathbb{E}_S\left[\|w^{(t)}_{S,k} - \bar w\|^2\right] + \frac{k}{L}\mathbb{E}_S\left[\|\nabla F_S(\bar w)\|^2_\infty\right] + L \mathbb{E}_S\left[\|w^{(t)}_{S,k} - \bar w\|^2\right] \\
\le& \frac{3L}{2}\mathbb{E}_S\left[\|w^{(t)}_{S,k} - \bar w\|^2\right] + \frac{k}{L}\mathbb{E}_S\left[\|\nabla F_S(\bar w)\|^2_\infty\right]\\
\overset{\zeta_2}{\le}& \frac{3L}{2} \left(\frac{32k}{\mu^2_{2k}}+1\right)\delta_n+ \frac{6L\epsilon}{\mu_{2k}}  + \frac{k}{L} \delta_n = \left(\frac{(48L^2 + \mu_{2k}^2)k}{L\mu^2_{2k}} + \frac{3L}{2}\right)\delta_n + \frac{6L\epsilon}{\mu_{2k}},
\end{aligned}
\]
where in ``$\zeta_1$'' we have used $\mathbb{E}_S[F_S(\bar w)] = F(\bar w)$ and ``$\zeta_2$'' we have used Lemma~\ref{lemma:L2_error_expectation}. Similarly, we can bound the excess risk in expectation as:
\[
\mathbb{E}_S \left[F(w^{(t)}_{S,k}) - F(\bar w)\right] \le  \frac{L}{2}\mathbb{E}_S\left[\|w^{(t)}_{S,k} - \bar w\|^2\right] \le \frac{L}{2} \left(\frac{32k}{\mu^2_{2k}}+1\right)\delta_n + \frac{2L\epsilon}{\mu_{2k}}.
\]
The desired bounds in expectation follow immediately by noting $\mu_{2k}\ge\mu_{3k}$ and setting $\epsilon = \frac{1}{\mu_{3k}}\left(\frac{k\sigma^2\log (p/\delta)}{n}\right)$. This completes the proof.
\end{proof}

\newpage

\subsection{Proof of Corollary~\ref{corol:uniform_stability_iht}}
\label{apdsect:proof_uniform_stability_iht}

Here we prove Corollary~\ref{corol:uniform_stability_iht} as restated below.
\IHTBlackBoxBoundsStability*

\begin{proof}
Let $\mathcal{J}=\{J \subseteq \{1,...,p\}: |J|= k\}$ be the set of index set of cardinality $k$. By definition it always holds that $\tilde w^{(t)}_{S,k} \in \{w_{S\mid J}: J\in \mathcal{J}\}$. Using the identical proof arguments of Proposition~\ref{thrm:uniform_stability} we can show that with probability at least $1-\delta$ over $S$, the generalization gap $F(\tilde w^{(t)}_{S,k}) - F_S(\tilde w^{(t)}_{S,k})$ is upper bounded by $\mathcal{O}\left(\frac{G^{3/2}M^{1/4}}{\mu_k^{3/4}}\sqrt{\frac{\log(n)(\log(n/\delta)+k\log(p/k))}{n}}\right)$ which implies the first desired result.

For any $\epsilon>0$, given that $t=\mathcal{O}\left(\frac{L}{\mu_{3k}}\log \left(\frac{F_S(w^{(0)})}{\epsilon}\right)\right)$ is sufficiently large, we can bound the excess risk $F(\tilde w^{(t)}_{S}) - F(\bar w)$ as
\[
\begin{aligned}
F(\tilde w^{(t)}_{S,k}) - F(\bar w) =& F(\tilde w^{(t)}_{S,k}) - F_S(\tilde w^{(t)}_{S,k}) + F_S(\tilde w^{(t)}_{S,k}) -F_S(\bar w) + F_S(\bar w) - F(\bar w) \\
\le& F(\tilde w^{(t)}_{S,k}) - F_S(\tilde w^{(t)}_{S,k}) + F_S(\bar w) - F(\bar w) + \epsilon,
\end{aligned}
\]
where in the last inequality we have used the bound $F_S(\tilde w^{(t)}_{S,k}) \le F_S(w^{(t)}_{S,k}) \le F_S(\bar w) + \epsilon$ which is implied by the definition of $\tilde w_{S,k}^{(t)}$ and Lemma~\ref{lemma:convergence_iht}. Since $\ell(w;\xi)\le M$, from Hoeffding inequality we know that $F_S(\bar w) - F(\bar w) \le M\sqrt{\frac{\log(2/\delta)}{2n}}$ is valid with probability at least $1-\delta/2$. Then based on the previous generalization gap bound and union probability we get that with probability at least $1-\delta$, $F(\tilde w^{(t)}_{S,k}) - F(\bar w) \le$
\[
\mathcal{O}\left(\frac{G^{3/2}M^{1/4}}{\mu_k^{3/4}}\sqrt{\frac{\log(n)(\log(n/\delta)+k\log(p/k))}{n}} + M\sqrt{\frac{\log(1/\delta)}{n}}+ \epsilon\right).
\]
By setting $\epsilon=\mathcal{O}(\sqrt{k\log(n)\log(p/k)/n})$ we obtain the desired bound. This completes the proof.
\end{proof}

\subsection{Proof of Theorem~\ref{thrm:uniform_stability_strong_iht}}
\label{apdsect:proof_uniform_stability_strong_iht}

In this subsection, we present a detailed proof Theorem~\ref{thrm:uniform_stability_strong_iht} as restated below.
\IHTBlackBoxBoundsStabilityStrong*

In the following proof arguments, we will frequently use the operator $\mathrm{H}_J(w)$ which is defined as the restriction of $w$ over an index set $J$. We also will use the abbreviation $\mathrm{H}_J(\nabla F(w))=\nabla_J F(w)$ for the sake of notation simplicity. The following standard lemma is useful in our analysis. Its proof is provided here to make the entire proof arguments self-contained.
\begin{lemma}\label{lemma:strong_smooth}
Assume that a differentiable function $f$ is $\mu_s$-strongly convex and $L$-strongly smooth. For any index set $J$ with cardinality $|J| \le s$ and any $w,w'$
with $\supp(w)\cup \supp(w')\subseteq J$, if $\eta\in(0, 2 / (L+\mu_s) )$, then
\[
\|w - w' - \eta \nabla_J f(w) + \eta \nabla_J f(w')\| \le \left(1 - \frac{\eta L\mu_s}{L+\mu_s}\right)\|w - w'\|.
\]
\end{lemma}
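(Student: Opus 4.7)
The plan is to reduce the statement to the classical Nesterov co-coercivity inequality applied to the restriction of $f$ to the coordinate subspace indexed by $J$. Since $|J|\le s$ and both $w,w'$ are supported in $J$, for any feasible direction restricted to $J$ the function $f$ is effectively $\mu_s$-strongly convex (by RSC, as $\|w-w'\|_0\le s$) and $L$-strongly smooth (as this is a global property). Hence the standard co-coercivity estimate holds in the restricted form
\[
\langle \nabla_J f(w) - \nabla_J f(w'),\, w-w'\rangle \ge \frac{\mu_s L}{\mu_s + L}\|w-w'\|^2 + \frac{1}{\mu_s + L}\|\nabla_J f(w) - \nabla_J f(w')\|^2.
\]
I would first justify this by noting that the pullback $\tilde f(u)=f(P_J u)$, with $P_J$ the natural embedding $\mathbb{R}^{|J|}\hookrightarrow \mathbb{R}^p$, is a genuinely $\mu_s$-strongly convex and $L$-smooth function on $\mathbb{R}^{|J|}$, so the classical Nesterov inequality applies and then unrolls back to the gradients restricted to $J$.

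Next I would expand the squared norm of interest:
\[
\|w-w' - \eta(\nabla_J f(w) - \nabla_J f(w'))\|^2 = \|w-w'\|^2 - 2\eta \langle \nabla_J f(w) - \nabla_J f(w'),\, w-w'\rangle + \eta^2 \|\nabla_J f(w) - \nabla_J f(w')\|^2,
\]
and substitute the co-coercivity lower bound on the inner product. The coefficient multiplying $\|\nabla_J f(w) - \nabla_J f(w')\|^2$ becomes $\eta^2 - \tfrac{2\eta}{L+\mu_s} = \eta\bigl(\eta - \tfrac{2}{L+\mu_s}\bigr)$, which is nonpositive exactly when $\eta \le 2/(L+\mu_s)$, so that term can be discarded, yielding
\[
\|w-w' - \eta(\nabla_J f(w) - \nabla_J f(w'))\|^2 \le \left(1 - \frac{2\eta L\mu_s}{L+\mu_s}\right)\|w-w'\|^2.
\]

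Finally, I would match this against the target bound by taking square roots. Since $\bigl(1 - \tfrac{\eta L \mu_s}{L+\mu_s}\bigr)^2 = 1 - \tfrac{2\eta L\mu_s}{L+\mu_s} + \tfrac{\eta^2 L^2\mu_s^2}{(L+\mu_s)^2} \ge 1 - \tfrac{2\eta L\mu_s}{L+\mu_s}$, the derived squared bound is dominated by $\bigl(1 - \tfrac{\eta L \mu_s}{L+\mu_s}\bigr)^2\|w-w'\|^2$; taking square roots gives the claim, provided the contraction factor $1 - \tfrac{\eta L \mu_s}{L+\mu_s}$ is nonnegative, which follows from $\eta < 2/(L+\mu_s) \le (L+\mu_s)/(L\mu_s)$ by the AM-GM inequality $4L\mu_s \le (L+\mu_s)^2$. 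The only subtle step is the first one: ensuring the co-coercivity inequality remains valid under mere \emph{restricted} strong convexity; the support condition $\supp(w)\cup\supp(w')\subseteq J$ with $|J|\le s$ is exactly what enables the clean reduction to the low-dimensional subspace where classical co-coercivity applies directly.
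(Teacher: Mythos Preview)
Your proposal is correct and follows essentially the same route as the paper: both derive the restricted co-coercivity inequality
\[
\langle \nabla_J f(w) - \nabla_J f(w'),\, w-w'\rangle \ge \frac{L\mu_s}{L+\mu_s}\|w-w'\|^2 + \frac{1}{L+\mu_s}\|\nabla_J f(w) - \nabla_J f(w')\|^2,
\]
then expand the squared norm, drop the gradient term using $\eta \le 2/(L+\mu_s)$, and pass from $1-\tfrac{2\eta L\mu_s}{L+\mu_s}$ to $\bigl(1-\tfrac{\eta L\mu_s}{L+\mu_s}\bigr)^2$ via $1-2a\le(1-a)^2$. The only cosmetic difference is that the paper obtains the co-coercivity by writing $g(w)=f(w)-\tfrac{\mu_s}{2}\|w\|^2$ and invoking co-coercivity of the $(L-\mu_s)$-smooth convex residual, whereas you invoke Nesterov's inequality directly on the pullback $\tilde f$ to $\mathbb{R}^{|J|}$; these are two standard derivations of the same inequality.
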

\begin{proof}
Since $f$ is $\mu_s$-strongly convex over $J$, we have that $g(w)=f(w) - \frac{\mu_s\|w\|^2}{2}$ is convex and $(L-\mu_s)$-strongly smooth when restricted to $J$. Then based on the co-coercivity of $\nabla g$ we know that
\[
\langle \nabla_J g(w) - \nabla_J g(w'), w - w'\rangle \ge \frac{1}{L-\mu_s}\| \nabla_J g(w) - \nabla_J g(w')\|^2,
\]
which then yields
\[
\langle \nabla_J f(w) - \nabla_J f(w'), w - w'\rangle \ge \frac{1}{L+\mu_s}\|\nabla_J f(w) - \nabla_J f(w')\|^2 + \frac{L\mu_s}{L+\mu_s}\|w-w'\|^2.
\]
Based on this inequality we can show
\[
\begin{aligned}
&\|w - w' - \eta \nabla_J f(w) + \eta \nabla_J f(w')\|^2 \\
=& \|w-w'\|^2 - 2\eta \langle \nabla_J f(w) - \nabla_J f(w'), w - w'\rangle + \eta^2 \|\nabla_J f(w) - \nabla_J f(w')\|^2\\
\le& \left(1 - \frac{2\eta L\mu_s}{L+\mu_s}\right)\|w - w'\|^2 - \left(\frac{2\eta}{L+\mu_s} - \eta^2\right)\| \nabla_J f(w) - \nabla_J f(w')\|^2 \\
\le& \left(1 - \frac{2\eta L\mu_s}{L+\mu_s}\right)\|w - w'\|^2 \le \left(1 - \frac{\eta L\mu_s}{L+\mu_s}\right)^2\|w - w'\|^2,
\end{aligned}
\]
where in the last but one inequality we have used the assumption on $\eta$ and the last inequality follows from $1-2a\le (1-a)^2$. This readily implies the desired bound.
\end{proof}

The following key lemma shows that if the population function $F$ is IHT stable, then the supporting set of the sparse solution returned by IHT invoked on the empirical risk $F_S$ is also unique provided that $F_S$ is close enough to $F$ along the solution path of IHT.
\begin{lemma}\label{lemma:emp_iht_stability}
For a fixed data sample $S$, assume that $F_S$ is $\mu_{4k}$-strongly convex and $L$-strongly smooth. Let $\{w^{(t)}\}_{t=1}^T$ and $\{w_{S,k}^{(t)}\}_{t=1}^T$ respectively be the sequence generated by invoking IHT on $F$ and $F_S$ with step-size $\eta= \frac{2}{3L}$ and initialization $w^{(0)}$. Suppose that the population risk function $F$ is $(\varepsilon_{k},\eta, T, w^{(0)})$-IHT stable and $\left\|\nabla F_S(w^{(t)}) - \nabla F(w^{(t)})\right\|\le \frac{L\mu_{4k}\varepsilon_k}{2(L+\mu_{4k})}$, $\forall t\in [T]$. Then we have
\[
\|w^{(t)}_{S,k} - w^{(t)}\| < \varepsilon_k/2, \ \ \supp\left(w^{(t)}_{S,k}\right) = \supp\left(w^{(t)}\right), \  \ \forall t\in [T].
\]
\end{lemma}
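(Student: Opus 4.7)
The argument proceeds by induction on $t$ with the joint invariant $\supp(w^{(t)}_{S,k})=\supp(w^{(t)})$ and $\|w^{(t)}_{S,k}-w^{(t)}\|<\varepsilon_k/2$. The base case $t=0$ is immediate because both sequences are initialized at $w^{(0)}$. For the inductive step, let $J_{t-1}:=\supp(w^{(t-1)})=\supp(w^{(t-1)}_{S,k})$, so $|J_{t-1}|\le k\le 4k$, and introduce the pre-thresholding vectors
\[
v^{(t)}:=w^{(t-1)}-\eta\nabla F(w^{(t-1)}),\qquad v_S^{(t)}:=w^{(t-1)}_{S,k}-\eta\nabla F_S(w^{(t-1)}_{S,k}),
\]
so that $w^{(t)}=\mathrm{H}_k(v^{(t)})$ and $w^{(t)}_{S,k}=\mathrm{H}_k(v_S^{(t)})$. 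Because $F$ is $(\varepsilon_k,\eta,T,w^{(0)})$-IHT stable, $v^{(t)}$ is $\varepsilon_k$-hard-thresholding stable, and the remark following the definition of hard-thresholding stability tells me that any perturbation of $\ell_\infty$ norm less than $\varepsilon_k/2$ preserves its top-$k$ support. Consequently, once I establish $\|v_S^{(t)}-v^{(t)}\|<\varepsilon_k/2$ (which implies the $\ell_\infty$ bound), the support identity at step $t$ follows, and then $w^{(t)}_{S,k}-w^{(t)}=\mathrm{H}_{J_t}(v_S^{(t)}-v^{(t)})$ inherits the same $\ell_2$ bound.

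The core estimate comes from the decomposition
\[
v_S^{(t)}-v^{(t)}=\underbrace{(w^{(t-1)}_{S,k}-w^{(t-1)})-\eta\bigl(\nabla F_S(w^{(t-1)}_{S,k})-\nabla F_S(w^{(t-1)})\bigr)}_{A_t}\;-\;\underbrace{\eta\bigl(\nabla F_S(w^{(t-1)})-\nabla F(w^{(t-1)})\bigr)}_{B_t}.
\]
The hypothesis bounds $B_t$ directly: $\|B_t\|\le\eta G$ with $G:=\frac{L\mu_{4k}\varepsilon_k}{2(L+\mu_{4k})}$, so $\eta G=\frac{\mu_{4k}\varepsilon_k}{3(L+\mu_{4k})}$ after plugging in $\eta=2/(3L)$. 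For $A_t$, both $w^{(t-1)}_{S,k}$ and $w^{(t-1)}$ are supported on $J_{t-1}$, and since $|J_{t-1}|\le 4k$ and $\eta=2/(3L)\le 2/(L+\mu_{4k})$, Lemma~\ref{lemma:strong_smooth} applied with $J=J_{t-1}$ and $s=4k$ yields the contraction $\|A_t\|\le\rho\,\|w^{(t-1)}_{S,k}-w^{(t-1)}\|$ with $\rho:=1-\frac{2\mu_{4k}}{3(L+\mu_{4k})}<1$. Combining these via the triangle inequality produces the one-step recursion
\[
\|w^{(t)}_{S,k}-w^{(t)}\|\le\|v_S^{(t)}-v^{(t)}\|\le\rho\,\|w^{(t-1)}_{S,k}-w^{(t-1)}\|+\eta G.
\]
Unrolling from $\|w^{(0)}_{S,k}-w^{(0)}\|=0$ and summing the geometric series bounds $\|w^{(t)}_{S,k}-w^{(t)}\|$ by $\eta G/(1-\rho)=\varepsilon_k/2$; strictness of the bound for every finite $t$ follows because the recursion starts at zero and only asymptotically approaches its fixed point, so the strict inequality $\|w^{(t)}_{S,k}-w^{(t)}\|<\varepsilon_k/2$ holds throughout.

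\textbf{Main obstacle.} The delicate step is the passage from the restricted gradient contraction supplied by Lemma~\ref{lemma:strong_smooth} to a contraction of the full gradient step that defines $A_t$. A naive orthogonal split of $A_t$ along $J_{t-1}$ and $J_{t-1}^c$ gives the desired contraction on $J_{t-1}$ but, from global $L$-smoothness alone, a non-contractive $\eta L=2/3$ term on $J_{t-1}^c$ that would spoil the induction. Closing the gap should exploit the convexity of the loss $\ell$ (hence of $F_S$) together with co-coercivity to absorb the off-support gradient change into the restricted strong-convexity contribution in the $k$-sparse direction $w^{(t-1)}_{S,k}-w^{(t-1)}$. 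The constants $\eta=2/(3L)$ and $G=\frac{L\mu_{4k}\varepsilon_k}{2(L+\mu_{4k})}$ in the statement are calibrated precisely so that, after this reconciliation, the per-step error $\eta G=(1-\rho)\varepsilon_k/2$ aligns with the contraction factor to deliver the geometric bound $\varepsilon_k/2$ and not a hair more.
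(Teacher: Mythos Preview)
Your inductive skeleton and the decomposition $v_S^{(t)}-v^{(t)}=A_t-B_t$ match the paper, but the step where you invoke Lemma~\ref{lemma:strong_smooth} to get $\|A_t\|\le\rho\|w^{(t-1)}_{S,k}-w^{(t-1)}\|$ is not justified, and you correctly flag this yourself as the ``main obstacle.'' Lemma~\ref{lemma:strong_smooth} controls only the \emph{restricted} quantity $\|w-w'-\eta\nabla_J F_S(w)+\eta\nabla_J F_S(w')\|$, whereas your $A_t$ contains the full gradients $\nabla F_S$. On $J_{t-1}^c$ you have no strong convexity to spend, and the global co-coercivity fix you sketch does not deliver the contraction factor $\rho=1-\tfrac{2\mu_{4k}}{3(L+\mu_{4k})}$ that is calibrated to the error budget $\eta G$; a direct calculation shows that combining $\langle\Delta,h\rangle\ge\mu_{4k}\|h\|^2$ with $\langle\Delta,h\rangle\ge L^{-1}\|\Delta\|^2$ yields a contraction strictly weaker than $\rho$ for small $\mu_{4k}/L$, so the recursion overshoots $\varepsilon_k/2$.

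The paper's resolution is not to sharpen the full-gradient contraction but to avoid it altogether. It enlarges the index set to $J:=J^{(t)}\cup J^{(t+1)}\cup J_S^{(t)}\cup J_S^{(t+1)}$, which has cardinality at most $4k$ (this is where the $\mu_{4k}$ constant actually originates), and compares the \emph{restricted} pre-thresholding vectors $\hat w^{(t+1)}:=\mathrm{H}_J(v^{(t+1)})$ and $\hat w_{S,k}^{(t+1)}:=\mathrm{H}_J(v_S^{(t+1)})$. On $J$, only $\nabla_J F_S$ appears, so Lemma~\ref{lemma:strong_smooth} applies verbatim and gives $\|\hat w_{S,k}^{(t+1)}-\hat w^{(t+1)}\|<\varepsilon_k/2$. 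Because $J^{(t+1)}$ and $J_S^{(t+1)}$ are both contained in $J$ by construction, the top-$k$ supports of the restricted vectors coincide with those of the unrestricted ones, and the $\varepsilon_k$-hard-thresholding stability of $\hat w^{(t+1)}$ forces $J_S^{(t+1)}=J^{(t+1)}$. The norm bound $\|w_{S,k}^{(t+1)}-w^{(t+1)}\|\le\|\hat w_{S,k}^{(t+1)}-\hat w^{(t+1)}\|$ then closes the induction. In short, the missing idea is to carry $J_S^{(t+1)}$ (a priori unknown, but of size $k$) into the index set $J$ so that the entire argument stays within the restricted-gradient world of Lemma~\ref{lemma:strong_smooth}.
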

\begin{proof}
We show by induction that $\forall t\in \{0\} \cup[T]$, $\|w^{(t)}_{S,k} - w^{(t)}\| < \varepsilon_k/2$ and $\supp(w_{S,k}^{(t)})=\supp(w^{(t)})$. The base case $t=0$ holds trivially as  $w_{S,k}^{(0)}=w^{(0)}$. Suppose that the claim holds for some $t\ge 0$. Now consider the case $t+1$. Denote $J_S^{(\tau)}=\supp(w_{S,k}^{(\tau)})$ and $J^{(\tau)}=\supp(w^{(\tau)})$ for $\tau=t,t+1$ and $J=\cup_{\tau=t}^{t+1} \left(J_S^{(\tau)}\cup J^{(\tau)}\right)$. Then we must have $|J|\le 4k$. Let us consider the following pair of vectors:
\[
\hat w_{S,k}^{(t+1)}:= \mathrm{H}_J \left(w_{S,k}^{(t)} - \eta \nabla F_S(w_{S,k}^{(t)})\right), \quad\hat w^{(t+1)}:=\mathrm{H}_J \left( w^{(t)} - \eta \nabla F(w^{(t)})\right).
\]
We can show that
\[
\begin{aligned}
&\left\|\hat w_{S,k}^{(t+1)} - \hat w^{(t+1)}\right\| = \left\| w_{S,k}^{(t)} - \eta \nabla_J F_S(w_{S,k}^{(t)}) - w^{(t)} + \eta \nabla_J F(w^{(t)})\right\| \\
=&  \left\| w_{S,k}^{(t)} - w^{(t)} - \eta \nabla_J F_S(w_{S,k}^{(t)}) + \eta \nabla_J F_S(w^{(t)}) - \eta \nabla_J F_S(w^{(t)}) + \eta \nabla_J F(w^{(t)})\right\| \\
\le& \left\| w_{S,k}^{(t)} - w^{(t)} - \eta \nabla_J F_S(w_{S,k}^{(t)}) + \eta \nabla_J F_S(w^{(t)})\right\| + \eta \left\|\nabla F_S(w^{(t)}) - \nabla F(w^{(t)})\right\| \\
\overset{\zeta_1}{\le}& \left(1-\frac{2\mu_{4k}}{3(L+\mu_{4k})}\right)\left\|w_{S,k}^{(t)} - w^{(t)}\right\| + \frac{2}{3L}\left\|\nabla F_S(w^{(t)}) - \nabla F(w^{(t)})\right\| \\
\overset{\zeta_2}{<}&\left(1-\frac{2\mu_{4k}}{3(L+\mu_{4k})}\right) \frac{\varepsilon_k}{2} + \frac{2\mu_{4k}}{3(L+\mu_{4k})} \frac{\varepsilon_k}{2} = \frac{\varepsilon_k}{2},
\end{aligned}
\]
where in ``$\zeta_1$'' we have used Lemma~\ref{lemma:strong_smooth} with $\eta= \mu_{4k}/L^2$, and ``$\zeta_2$'' follows from the induction assumption and the bound on $\left\|\nabla F_S(w^{(t)}) - \nabla F(w^{(t)})\right\|$. By definition $J^{(t+1)}\subseteq J$, and thus it holds trivially that $J^{(t+1)}$ also uniquely contains the top $k$ (in magnitude) entries of $\hat w^{(t+1)}$ as a restriction of $w^{(t+1)}$ over $J$. Based on this observation, since $F$ is $(\varepsilon_{k},\eta, T, w^{(0)})$-IHT stable, $w^{(t)} - \eta \nabla F(w^{(t)})$ must be $\varepsilon_{k}$-hard-thresholding stable which then implies that $\hat w^{(t+1)}$ is also $\varepsilon_{k}$-hard-thresholding stable. Therefore, the preceding inequality readily indicates that $\hat w_{S,k}^{(t+1)}$ and $\hat w^{(t+1)}$ share the identical top $k$ entries, and thus $J_S^{(t+1)}=J^{(t+1)}$. Consequently, based on the preceding inequality we can show that
\[
\left\|w_{S,k}^{(t+1)} - w^{(t+1)}\right\| \le \left\|\hat w_{S,k}^{(t+1)} - \hat w^{(t+1)}\right\| < \varepsilon_k/2.
\]
This shows that the claim holds for $t+1$ and the proof is concluded.
\end{proof}
Now we are in the position to prove the main result.
\begin{proof}[Proof of Theorem~\ref{thrm:uniform_stability_strong_iht}]
Let us define an oracle sequence $\{w^{(t)}\}_{t=1}^T$ generated by applying $T$ rounds of IHT iteration to the population risk $F$ with the considered fixed initialization $w^{(0)}$ and step-size $\eta$. Since $F$ is $(\varepsilon_{k},\eta, T, w^{(0)})$-IHT stable, the sequence $\{w^{(t)}\}_{t=1}^T$ is unique. Since $\|\nabla \ell(w;\cdot)\|\le G$, from Hoeffding concentration bound and union probability we know that with probability at least $1-\delta$ over $S$, $\forall t\in [T]$,
\[
\|\nabla F_S(w^{(t)}) - \nabla F(w^{(t)})\| \le G \sqrt{\frac{\log(pT/\delta)}{2n}} \le \frac{L\mu_{4k}\varepsilon_k}{2(L+\mu_{4k})},
\]
where the last inequality is due to the condition on sample size $n$. The assumptions in the theorem imply that $F_S$ is $L$-strongly smooth and $\mu_{4k}$-strongly convex with probability at least $1-\delta'_n$ over $S$. Therefore, by invoking Lemma~\ref{lemma:emp_iht_stability} and union probability we know that with  probability at least $1-\delta'_n -\delta$ over $S$,
\[
 \supp\left(w^{(T)}_S\right) = \supp\left(w^{(T)}\right).
\]
Since $\supp\left(w^{(T)}\right)$ is a fixed deterministic index set of size $k$, from Lemma~\ref{lemma:support_stability} (keep in mind that $\mu_{4k}\le \mu_k$) we obtain the following bound:
\[
F(\tilde w^{(T)}_S) - F_S(\tilde w^{(T)}_S) \le \mathcal{O}\left(\frac{G^2}{\lambda n}\log(n)\log(n/\delta) + \sqrt{\frac{\log(1/\delta)}{n}} + \frac{\lambda G \sqrt{M}}{\mu_{4k}\sqrt{\mu_{4k}}}\right).
\]
The desired generalization gap bound then follows immediately by setting $\lambda=\sqrt{\frac{\mu_{4k}^{1.5}\log(n)\log(n/\delta)}{nM^{0.5}}}$ in the above bound.

The proof of the excess risk bound is almost identical to that of Corollary~\ref{corol:uniform_stability_iht}. Here we still present some key ingredients for the sake of completeness. If $T=\mathcal{O}\left(\frac{L}{\mu_{3k}}\log \left(\frac{F_S(w^{(0)})}{\epsilon}\right)\right)$ is sufficiently large, then using Lemma~\ref{lemma:convergence_iht} we can bound $F(\tilde w^{(T)}_{S}) - F(\bar w)$ as
\[
F(\tilde w^{(T)}_S) - F(\bar w) \le F(\tilde w^{(T)}_S) - F_S(\tilde w^{(T)}_S) + F_S(\bar w) - F(\bar w) + \epsilon.
\]
Since $\ell(w;\xi)\le M$, from Hoeffding inequality we know that $F_S(\bar w) - F(\bar w) \le M\sqrt{\frac{\log(2/\delta)}{2n}}$ is valid with probability at least $1-\delta/2$.
In view of the previous generalization gap bound and by union probability we get that with probability at least $1-\delta-\delta'_n$,
\[
\begin{aligned}
F(\tilde w^{(t)}_{S,k}) - F(\bar w) \le \mathcal{O}\left(\frac{G^{3/2}M^{1/4}}{\mu_{2k}^{3/4}}\sqrt{\frac{\log(n)\log(n/\delta)}{n}} + M\sqrt{\frac{\log(1/\delta)}{n}} + \epsilon\right).
\end{aligned}
\]
By setting $\epsilon=\mathcal{O}(\sqrt{\log(1/\delta)/n})$ we obtain the desired bound. This completes the proof.
\end{proof}

\section{Proofs of Auxiliary Lemmas}\label{ProofforAuxiliaryLemmas}

This appendix section collects the technical proofs of the auxiliary lemmas presented in Appendix~\ref{append:lemmas}.

\subsection{Proof of Lemma~\ref{lemma:estimation_error}}
\begin{proof}
Since $f$ is $\mu_s$-strongly convex and $f(w)\le f(w') +\epsilon$, we have
\[
\begin{aligned}
f(w)  \ge& f(w') + \langle \nabla f(w'), w - w'\rangle + \frac{\mu_s}{2} \|w - w'\|^2 \\
\ge& f(w') - \|\nabla_{I \cup I'} f(w')\|\|w - w'\| + \frac{\mu_s}{2} \|w - w'\|^2\\
\ge& f(w) - \epsilon - \|\nabla_{I \cup I'} f(w')\|\|w - w'\| + \frac{\mu_s}{2} \|w - w'\|^2,
\end{aligned}
\]
where the second inequality follows from Cauchy-Schwarz inequality. The above inequality then implies
\[
\|w - w'\| \le \frac{2\|\nabla_{I \cup I'} f(w')\| + \sqrt{2 \mu_s\epsilon}}{\mu_s}\le \frac{2\sqrt{s}\|\nabla f(w')\|_\infty}{\mu_s} + \sqrt{\frac{2\epsilon}{\mu_s}}.
\]
This shows the desired bound. Next we show that $\supp(w')\subseteq \supp(w)$ given that $w'_{\min}> \frac{2\sqrt{s}\|\nabla f(w')\|_\infty}{\mu_s}+ \sqrt{\frac{2\epsilon}{\mu_s}}$. Assume otherwise $\supp(w')\nsubseteq \supp(w)$. Then the previous bound implies that
\[
w'_{\min} \le \|w - w'\| \le \frac{2\sqrt{s}\|\nabla f(w')\|_\infty}{\mu_s} + \sqrt{\frac{2\epsilon}{\mu_s}},
\]
which contradicts the assumption. Therefore, it must hold that $\supp(w')\subseteq \supp(w)$.
\end{proof}
\begin{proof}
Let us define an \emph{oracle} sequence $\{w^{(t)}\}_{t=1}^T$ generated by applying $T$ rounds of IHT iteration to the population risk $F$ with the considered initialization $w^{(0)}$ and step-size $\eta$. Since $F$ is $(\varepsilon_{k},\eta, T, w^{(0)})$-IHT stable, the sequence $\{w^{(t)}\}_{t=1}^T$ is unique and unique. Since $\|\nabla \ell(w;\cdot)\|\le G$, from Hoeffding concentration bound and union probability we know that with probability at least $1-\delta$ over $S$, $\forall t\in [T]$,
\[
\|\nabla F_S(w^{(t)}) - \nabla F(w^{(t)})\| \le G \sqrt{\frac{\log(pT/\delta)}{2n}} \le \frac{L^2\varepsilon_k}{2\mu_{4k}\left(1-\sqrt{1-\mu^2_{4k}/L^2}\right)},
\]
where the last inequality is due to the condition on sample size $n$. The assumptions in the theorem imply that $F_S$ is $L$-strongly smooth and $\mu_{4k}$-strongly convex with probability at least $1-\delta'_n$ over $S$. Therefore, by invoking Lemma~\ref{lemma:emp_iht_stability} and union probability we know that with  probability at least $1-\delta'_n -\delta$ over $S$,
\[
 \supp\left(w^{(T)}_S\right) = \supp\left(w^{(T)}\right).
\]
Since $\supp\left(w^{(T)}\right)$ is a fixed deterministic index set of size $k$, from Lemma~\ref{lemma:support_stability} (keep in mind that $\mu_{4k}\le \mu_k$) we obtain the following bound:
\[
F(\tilde w^{(T)}_S) - F_S(\tilde w^{(T)}_S) \le \mathcal{O}\left(\frac{G^2\log(n)\log(n/\delta)}{\lambda n} + \sqrt{\frac{\log(1/\delta)}{n}} + \frac{\lambda G R}{\mu_{4k}+\lambda}\right).
\]
Finally, the desired bound follows immediately by setting $\lambda = \mathcal{O}(1/\sqrt{n})$ in the above bound.
\end{proof}

\subsection{Proof of Lemma~\ref{lemma:max_subgaussian_bound}}
\begin{proof}
The proof of the first bound follows directly from~\cite[Lemma 1.3]{lugosi2002pattern} and thus is omitted here. We just prove the second bound. Fix $j\in\{1,...,p\}$. Since $X_j$ is zero-mean $\sigma^2$-sub-Gaussian, it is standard to know that $ \mathbb{E}[X^2_j] \le 4\sigma^2$ and $X^2_j$ is $16\sigma^2$-sub-exponential, i.e.,
\[
\mathbb{E}\left[\exp\left\{\lambda (X^2_j - \mathbb{E}[X^2_j])\right\}\right]\le \exp\left\{128\lambda^2\sigma^4\right\}, \quad |\lambda| \le \frac{1}{16\sigma^2}.
\]
It follows that
\[
\mathbb{E}\left[\exp\left\{\lambda X^2_j\right\}\right]\le \exp\left\{\lambda\mathbb{E}[X^2_j] + 128\lambda^2\sigma^4\right\} \le \exp\left\{1/4 + 128\lambda^2\sigma^4\right\}, \quad |\lambda| \le \frac{1}{16\sigma^2}.
\]
Then, for any $\lambda \in(0,\frac{1}{16\sigma^2}]$,
\[
\begin{aligned}
&\mathbb{E}\left[\max_{1\le j \le p} X_j^2\right] = \frac{1}{\lambda} \mathbb{E}\left[\log\left(\exp\left\{\lambda \max_{1\le j \le p} X_j^2\right\}\right)\right] \\
\overset{\zeta_1}{\le}& \frac{1}{\lambda} \log\left(\mathbb{E}\left[\exp\left\{\lambda \max_{1\le j \le p} X_j^2\right\}\right]\right) =  \frac{1}{\lambda} \log\left(\mathbb{E}\left[\max_{1\le j \le p} \exp\left\{\lambda X_j^2\right\}\right]\right) \\
\le& \frac{1}{\lambda} \log\left(\sum_{1\le j \le p}\mathbb{E}\left[ \exp\left\{\lambda X_j^2\right\}\right]\right) \overset{\zeta_2}{\le} \frac{1}{\lambda} \log\left(\sum_{1\le j \le p}\exp\left\{1/4 +  128\lambda^2\sigma^4\right\}\right) \\
=&  \frac{1}{\lambda} \log\left(p\exp\left\{1/4 +  128\lambda^2\sigma^4\right\}\right) = \frac{\log p}{\lambda} + \frac{1}{4\lambda} + 128\lambda\sigma^4,
\end{aligned}
\]
where ``$\zeta_1$'' is due to Jensen's inequality and ``$\zeta_2$'' follows from the previous inequality. By setting $\lambda = \frac{1}{16\sigma^2}$ we obtain
\[
\mathbb{E}\left[\max_{1\le j \le p} X_j^2\right]  \le 72\sigma^2 + 16\sigma^2\log p.
\]
This proves the desired bound.
\end{proof}

\subsection{Proof of Lemma~\ref{lemma:sub_Gaussian_bounds}}

\begin{proof}
Fix $j \in \{1,...,p\}$. Since $\nabla_j \ell(\bar w;\xi)$ are assumed to be $\sigma^2$-sub-Gaussian and $\nabla F(\bar w) = \mathbb{E}_{\xi}\left[\nabla \ell(\bar w;\xi)\right] = 0$, we must have $\nabla_j \ell(\bar w;\xi)$ are zero-mean $\sigma^2$-sub-Gaussian. Thus it is known from the Hoeffding bound (see, e.g., ~\cite{vershynin2010introduction}) that for any $\varepsilon >0 $,
\[
\mathbb{P}\left(\left|\nabla_j F_S(\bar w)\right| > \varepsilon \right) = \mathbb{P}\left(\left|\frac{1}{n}\sum_{\xi_i\in S}\nabla_j \ell(\bar w; \xi_i)\right| > \varepsilon \right)\le \exp\left\{-\frac{n\varepsilon^2}{2\sigma^2}\right\}.
\]
By the union  bound we have
\[
\mathbb{P}(\|\nabla F_S(\bar w)\|_\infty > \varepsilon) \le
p\exp\left\{-\frac{n\varepsilon^2}{2\sigma^2}\right\} \nonumber.
\]
By choosing $\varepsilon = \sqrt{\frac{2\sigma^2\log(p/\delta)}{n}}$ in the above inequality we obtain that
with probability at least $1-\delta$,
\[
\|\nabla F_S(\bar w)\|_\infty \le \sqrt{\frac{2\sigma^2\log(p/\delta)}{n}}.
\]
This proves the first high probability bound.

Next, we prove the concentration bound in expectation. For each $j\in\{1,...,p\}$, let $X_j=\sum_{\xi_i\in S}\nabla_j \ell(\bar w; \xi_i)$. Since $\nabla_j \ell(\bar w;\xi_i)$ are assumed to be $\sigma^2$-sub-Gaussian, we have $X_j$ are all $n\sigma^2$-sub-Gaussian. By invoking Lemma~\ref{lemma:max_subgaussian_bound} we obtain
\[
\mathbb{E}\left[\max_{1\le j\le p }|X_j|\right] \le \sigma\sqrt{2n\log(2p)}.
\]
It follows that
\[
\mathbb{E}_S\left[\|\nabla F_S(\bar w)\|_\infty\right]  = \frac{1}{n}\mathbb{E}\left[\max_{1\le j\le p }|X_j|\right] \le \sigma\sqrt{\frac{2\log(2p)}{n}}.
\]
Moreover, since $X_j$ are all $n\sigma^2$-sub-Gaussian, it follows from Lemma~\ref{lemma:max_subgaussian_bound} that
\[
\quad \mathbb{E}\left[\max_{1\le j\le p }X^2_j\right] \le  72n\sigma^2 + 16n\sigma^2\log p.
\]
Finally,
\[
\mathbb{E}_S\left[\|\nabla F_S(\bar w)\|^2_\infty\right]  = \frac{1}{n^2}\mathbb{E}\left[\max_{1\le j\le p }X^2_j\right] \le \frac{\sigma^2(72 + 16\log p)}{n}.
\]
This completes the proof.
\end{proof}

\bibliographystyle{acmtrans-ims}
\bibliography{ijcai17}

\begin{thebibliography}{}
\ifx \url   \undefined \def \url#1{#1}   \fi

\bibitem{abramovich2018high}
\textsc{Abramovich, F.} \textsc{and} \textsc{Grinshtein, V.} (2019).
\newblock High-dimensional classification by sparse logistic regression.
\newblock \emph{IEEE Transactions on Information Theory\/}~\textbf{65},~5,
  3068--3079.

\bibitem{agarwal2012fast}
\textsc{Agarwal, A.}, \textsc{Negahban, S.}, \textsc{Wainwright, M.~J.},
  \textsc{and} \textsc{others}. (2012).
\newblock Fast global convergence of gradient methods for high-dimensional
  statistical recovery.
\newblock \emph{The Annals of Statistics\/}~\textbf{40},~5, 2452--2482.

\bibitem{arora2018stronger}
\textsc{Arora, S.}, \textsc{Ge, R.}, \textsc{Neyshabur, B.}, \textsc{and}
  \textsc{Zhang, Y.} (2018).
\newblock Stronger generalization bounds for deep nets via a compression
  approach.
\newblock In \emph{International Conference on Machine Learning}. 254--263.

\bibitem{bach2012optimization}
\textsc{Bach, F.}, \textsc{Jenatton, R.}, \textsc{Mairal, J.}, \textsc{and}
  \textsc{Obozinski, G.} (2012).
\newblock Optimization with sparsity-inducing penalties.
\newblock \emph{Foundations and Trends{\textregistered} in Machine
  Learning\/}~\textbf{4},~1, 1--106.

\bibitem{bahmani2013greedy}
\textsc{Bahmani, S.}, \textsc{Raj, B.}, \textsc{and} \textsc{Boufounos, P.~T.}
  (2013).
\newblock Greedy sparsity-constrained optimization.
\newblock \emph{Journal of Machine Learning Research\/}~\textbf{14},~Mar,
  807--841.

\bibitem{bartlett2006convexity}
\textsc{Bartlett, P.~L.}, \textsc{Jordan, M.~I.}, \textsc{and}
  \textsc{McAuliffe, J.~D.} (2006).
\newblock Convexity, classification, and risk bounds.
\newblock \emph{Journal of the American Statistical
  Association\/}~\textbf{101},~473, 138--156.

\bibitem{bellec2018slope}
\textsc{Bellec, P.~C.}, \textsc{Lecu{\'e}, G.}, \textsc{Tsybakov, A.~B.},
  \textsc{and} \textsc{others}. (2018).
\newblock Slope meets lasso: improved oracle bounds and optimality.
\newblock \emph{The Annals of Statistics\/}~\textbf{46},~6B, 3603--3642.

\bibitem{blumensath2009iterative}
\textsc{Blumensath, T.} \textsc{and} \textsc{Davies, M.~E.} (2009).
\newblock Iterative hard thresholding for compressed sensing.
\newblock \emph{Applied and Computational Harmonic Analysis\/}~\textbf{27},~3,
  265--274.

\bibitem{boroczky2003covering}
\textsc{B{\"o}r{\"o}czky, K.} \textsc{and} \textsc{Wintsche, G.} (2003).
\newblock Covering the sphere by equal spherical balls.
\newblock In \emph{Discrete and computational geometry}. Springer, 235--251.

\bibitem{bottou2008tradeoffs}
\textsc{Bottou, L.} \textsc{and} \textsc{Bousquet, O.} (2008).
\newblock The tradeoffs of large scale learning.
\newblock In \emph{Advances in Neural Information Processing Systems}.
  161--168.

\bibitem{bousquet2002stability}
\textsc{Bousquet, O.} \textsc{and} \textsc{Elisseeff, A.} (2002).
\newblock Stability and generalization.
\newblock \emph{Journal of Machine Learning Research\/}~\textbf{2},~Mar,
  499--526.

\bibitem{cesa2006prediction}
\textsc{Cesa-Bianchi, N.} \textsc{and} \textsc{Lugosi, G.} (2006).
\newblock \emph{Prediction, learning, and games}.
\newblock Cambridge University Press.

\bibitem{charles2018stability}
\textsc{Charles, Z.} \textsc{and} \textsc{Papailiopoulos, D.} (2018).
\newblock Stability and generalization of learning algorithms that converge to
  global optima.
\newblock In \emph{International Conference on Machine Learning}. 744--753.

\bibitem{chatterjee2013assumptionless}
\textsc{Chatterjee, S.} (2013).
\newblock Assumptionless consistency of the lasso.
\newblock \emph{arXiv preprint arXiv:1303.5817\/}.

\bibitem{chen2018best}
\textsc{Chen, L.-Y.} \textsc{and} \textsc{Lee, S.} (2018a).
\newblock Best subset binary prediction.
\newblock \emph{Journal of Econometrics\/}~\textbf{206},~1, 39--56.

\bibitem{chen2018high}
\textsc{Chen, L.-Y.} \textsc{and} \textsc{Lee, S.} (2018b).
\newblock High dimensional classification through $\ell_0$-penalized empirical
  risk minimization.
\newblock \emph{arXiv preprint arXiv:1811.09540\/}.

\bibitem{donoho2006compressed}
\textsc{Donoho, D.~L.} (2006).
\newblock Compressed sensing.
\newblock \emph{IEEE Transactions on Information Theory\/}~\textbf{52},~4,
  1289--1306.

\bibitem{fan2001variable}
\textsc{Fan, J.} \textsc{and} \textsc{Li, R.} (2001).
\newblock Variable selection via nonconcave penalized likelihood and its oracle
  properties.
\newblock \emph{Journal of the American statistical
  Association\/}~\textbf{96},~456, 1348--1360.

\bibitem{fan2011nonconcave}
\textsc{Fan, J.} \textsc{and} \textsc{Lv, J.} (2011).
\newblock Nonconcave penalized likelihood with np-dimensionality.
\newblock \emph{IEEE Transactions on Information Theory\/}~\textbf{57},~8,
  5467--5484.

\bibitem{fan2004nonconcave}
\textsc{Fan, J.} \textsc{and} \textsc{Peng, H.} (2004).
\newblock Nonconcave penalized likelihood with a diverging number of
  parameters.
\newblock \emph{The Annals of Statistics\/}~\textbf{32},~3, 928--961.

\bibitem{fan2014strong}
\textsc{Fan, J.}, \textsc{Xue, L.}, \textsc{Zou, H.}, \textsc{and}
  \textsc{others}. (2014).
\newblock Strong oracle optimality of folded concave penalized estimation.
\newblock \emph{The Annals of Statistics\/}~\textbf{42},~3, 819--849.

\bibitem{feldman2018generalization}
\textsc{Feldman, V.} \textsc{and} \textsc{Vondrak, J.} (2018).
\newblock Generalization bounds for uniformly stable algorithms.
\newblock In \emph{Advances in Neural Information Processing Systems}.
  9747--9757.

\bibitem{feldman2019high}
\textsc{Feldman, V.} \textsc{and} \textsc{Vondrak, J.} (2019).
\newblock High probability generalization bounds for uniformly stable
  algorithms with nearly optimal rate.
\newblock In \emph{Proceedings of the Thirty-Second Conference on Learning
  Theory}. 1270--1279.

\bibitem{foucart2011hard}
\textsc{Foucart, S.} (2011).
\newblock Hard thresholding pursuit: an algorithm for compressive sensing.
\newblock \emph{SIAM Journal on Numerical Analysis\/}~\textbf{49},~6,
  2543--2563.

\bibitem{foucart2017mathematical}
\textsc{Foucart, S.} \textsc{and} \textsc{Rauhut, H.} (2017).
\newblock A mathematical introduction to compressive sensing.
\newblock \emph{Bull. Am. Math\/}~\emph{54}, 151--165.

\bibitem{frankle2019lottery}
\textsc{Frankle, J.} \textsc{and} \textsc{Carbin, M.} (2019).
\newblock The lottery ticket hypothesis: Finding sparse, trainable neural
  networks.
\newblock In \emph{International Conference on Learning Representations}.

\bibitem{garg2009gradient}
\textsc{Garg, R.} \textsc{and} \textsc{Khandekar, R.} (2009).
\newblock Gradient descent with sparsification: an iterative algorithm for
  sparse recovery with restricted isometry property.
\newblock In \emph{International Conference on Machine Learning}.
  Vol.~\textbf{9}. 337--344.

\bibitem{han2016deep}
\textsc{Han, S.}, \textsc{Mao, H.}, \textsc{and} \textsc{Dally, W.~J.} (2016).
\newblock Deep compression: Compressing deep neural networks with pruning,
  trained quantization and huffman coding.
\newblock In \emph{International Conference on Learning Representations}.

\bibitem{hardt2016train}
\textsc{Hardt, M.}, \textsc{Recht, B.}, \textsc{and} \textsc{Singer, Y.}
  (2016).
\newblock Train faster, generalize better: Stability of stochastic gradient
  descent.
\newblock In \emph{International Conference on Machine Learning}. 1225--1234.

\bibitem{hastie2015statistical}
\textsc{Hastie, T.}, \textsc{Tibshirani, R.}, \textsc{and} \textsc{Wainwright,
  M.} (2015).
\newblock \emph{Statistical learning with sparsity}.
\newblock CRC press.

\bibitem{jain2014iterative}
\textsc{Jain, P.}, \textsc{Tewari, A.}, \textsc{and} \textsc{Kar, P.} (2014).
\newblock On iterative hard thresholding methods for high-dimensional
  m-estimation.
\newblock In \emph{Advances in Neural Information Processing Systems}.
  685--693.

\bibitem{jin2016training}
\textsc{Jin, X.}, \textsc{Yuan, X.-T.}, \textsc{Feng, J.}, \textsc{and}
  \textsc{Yan, S.} (2016).
\newblock Training skinny deep neural networks with iterative hard thresholding
  methods.
\newblock \emph{arXiv preprint arXiv:1607.05423\/}.

\bibitem{kakade2009complexity}
\textsc{Kakade, S.~M.}, \textsc{Sridharan, K.}, \textsc{and} \textsc{Tewari,
  A.} (2009).
\newblock On the complexity of linear prediction: Risk bounds, margin bounds,
  and regularization.
\newblock In \emph{Advances in Neural Information Processing Systems}.
  793--800.

\bibitem{kuzborskij2018data}
\textsc{Kuzborskij, I.} \textsc{and} \textsc{Lampert, C.} (2018).
\newblock Data-dependent stability of stochastic gradient descent.
\newblock In \emph{International Conference on Machine Learning}. 2820--2829.

\bibitem{li2015sparsistency}
\textsc{Li, Y.-H.}, \textsc{Scarlett, J.}, \textsc{Ravikumar, P.}, \textsc{and}
  \textsc{Cevher, V.} (2015).
\newblock Sparsistency of $\ell_1$-regularized m-estimators.
\newblock In \emph{Artificial Intelligence and Statistics}. 644--652.

\bibitem{loh2012high}
\textsc{Loh, P.-L.}, \textsc{Wainwright, M.~J.}, \textsc{and} \textsc{others}.
  (2012).
\newblock High-dimensional regression with noisy and missing data: Provable
  guarantees with nonconvexity.
\newblock \emph{The Annals of Statistics\/}~\textbf{40},~3, 1637--1664.

\bibitem{lugosi2002pattern}
\textsc{Lugosi, G.} (2002).
\newblock Pattern classification and learning theory.
\newblock In \emph{Principles of Nonparametric Learning}. Springer, 1--56.

\bibitem{maurer2012structured}
\textsc{Maurer, A.} \textsc{and} \textsc{Pontil, M.} (2012).
\newblock Structured sparsity and generalization.
\newblock \emph{Journal of Machine Learning Research\/}~\textbf{13},~Mar,
  671--690.

\bibitem{mei2018landscape}
\textsc{Mei, S.}, \textsc{Bai, Y.}, \textsc{Montanari, A.}, \textsc{and}
  \textsc{others}. (2018).
\newblock The landscape of empirical risk for nonconvex losses.
\newblock \emph{The Annals of Statistics\/}~\textbf{46},~6A, 2747--2774.

\bibitem{meinshausen2009lasso}
\textsc{Meinshausen, N.}, \textsc{Yu, B.}, \textsc{and} \textsc{others}.
  (2009).
\newblock Lasso-type recovery of sparse representations for high-dimensional
  data.
\newblock \emph{The Annals of Statistics\/}~\textbf{37},~1, 246--270.

\bibitem{mukherjee2006learning}
\textsc{Mukherjee, S.}, \textsc{Niyogi, P.}, \textsc{Poggio, T.}, \textsc{and}
  \textsc{Rifkin, R.} (2006).
\newblock Learning theory: stability is sufficient for generalization and
  necessary and sufficient for consistency of empirical risk minimization.
\newblock \emph{Advances in Computational Mathematics\/}~\textbf{25},~1-3,
  161--193.

\bibitem{natarajan1995sparse}
\textsc{Natarajan, B.~K.} (1995).
\newblock Sparse approximate solutions to linear systems.
\newblock \emph{SIAM Journal on Computing\/}~\textbf{24},~2, 227--234.

\bibitem{pollard2012convergence}
\textsc{Pollard, D.} (2012).
\newblock \emph{Convergence of stochastic processes}.
\newblock Springer Science \& Business Media.

\bibitem{ravikumar2011high}
\textsc{Ravikumar, P.}, \textsc{Wainwright, M.~J.}, \textsc{Raskutti, G.},
  \textsc{Yu, B.}, \textsc{and} \textsc{others}. (2011).
\newblock High-dimensional covariance estimation by minimizing
  $\ell_1$-penalized log-determinant divergence.
\newblock \emph{Electronic Journal of Statistics\/}~\emph{5}, 935--980.

\bibitem{rigollet201518}
\textsc{Rigollet, P.} (2015).
\newblock 18. s997: High dimensional statistics.
\newblock \emph{Lecture Notes, Cambridge, MA, USA: MIT Open-CourseWare\/}.

\bibitem{shalev2009stochastic}
\textsc{Shalev-Shwartz, S.}, \textsc{Shamir, O.}, \textsc{Srebro, N.},
  \textsc{and} \textsc{Sridharan, K.} (2009).
\newblock Stochastic convex optimization.
\newblock In \emph{COLT}.

\bibitem{shalev2010learnability}
\textsc{Shalev-Shwartz, S.}, \textsc{Shamir, O.}, \textsc{Srebro, N.},
  \textsc{and} \textsc{Sridharan, K.} (2010).
\newblock Learnability, stability and uniform convergence.
\newblock \emph{Journal of Machine Learning Research\/}~\textbf{11},~Oct,
  2635--2670.

\bibitem{shen2017iteration}
\textsc{Shen, J.} \textsc{and} \textsc{Li, P.} (2017).
\newblock On the iteration complexity of support recovery via hard thresholding
  pursuit.
\newblock In \emph{Proceedings of the 34th International Conference on Machine
  Learning-Volume 70}. 3115--3124.

\bibitem{sun2019optimization}
\textsc{Sun, R.} (2019).
\newblock Optimization for deep learning: theory and algorithms.
\newblock \emph{arXiv preprint arXiv:1912.08957\/}.

\bibitem{talagrand1994sharper}
\textsc{Talagrand, M.} (1994).
\newblock Sharper bounds for gaussian and empirical processes.
\newblock \emph{The Annals of Probability\/}~\textbf{22},~1, 28--76.

\bibitem{tibshirani1996regression}
\textsc{Tibshirani, R.} (1996).
\newblock Regression shrinkage and selection via the lasso.
\newblock \emph{Journal of the Royal Statistical Society. Series B
  (Methodological)\/}, 267--288.

\bibitem{vapnik2013nature}
\textsc{Vapnik, V.} (2013).
\newblock \emph{The nature of statistical learning theory}.
\newblock Springer science \& business media.

\bibitem{vershynin2010introduction}
\textsc{Vershynin, R.} (2010).
\newblock Introduction to the non-asymptotic analysis of random matrices.
\newblock \emph{arXiv preprint arXiv:1011.3027\/}.

\bibitem{wainwright2009sharp}
\textsc{Wainwright, M.~J.} (2009).
\newblock Sharp thresholds for high-dimensional and noisy sparsity recovery
  using $\ell_1$-constrained quadratic programming (lasso).
\newblock \emph{IEEE Transactions on Information Theory\/}~\textbf{55},~5,
  2183--2202.

\bibitem{yuan2014gradient}
\textsc{Yuan, X.-T.}, \textsc{Li, P.}, \textsc{and} \textsc{Zhang, T.} (2014).
\newblock Gradient hard thresholding pursuit for sparsity-constrained
  optimization.
\newblock In \emph{International Conference on Machine Learning}. 127--135.

\bibitem{yuan2016exact}
\textsc{Yuan, X.-T.}, \textsc{Li, P.}, \textsc{and} \textsc{Zhang, T.} (2016).
\newblock Exact recovery of hard thresholding pursuit.
\newblock In \emph{Advances in Neural Information Processing Systems}.
  3558--3566.

\bibitem{yuan2018gradient}
\textsc{Yuan, X.-T.}, \textsc{Li, P.}, \textsc{and} \textsc{Zhang, T.} (2018).
\newblock Gradient hard thresholding pursuit.
\newblock \emph{Journal of Machine Learning Research\/}~\emph{18}, 1--43.

\bibitem{zhang2010nearly}
\textsc{Zhang, C.-H.} (2010).
\newblock Nearly unbiased variable selection under minimax concave penalty.
\newblock \emph{The Annals of Statistics\/}~\textbf{38},~2, 894--942.

\bibitem{zhang2012general}
\textsc{Zhang, C.-H.} \textsc{and} \textsc{Zhang, T.} (2012).
\newblock A general theory of concave regularization for high-dimensional
  sparse estimation problems.
\newblock \emph{Statistical Science\/}~\textbf{27},~4, 576--593.

\bibitem{zhou2018efficient}
\textsc{Zhou, P.}, \textsc{Yuan, X.-T.}, \textsc{and} \textsc{Feng, J.} (2018).
\newblock Efficient stochastic gradient hard thresholding.
\newblock In \emph{Advances in Neural Information Processing Systems}.
  1984--1993.

\end{thebibliography}


\end{document}